\def\depth{\mbox{\rm depth} \,}
\def\Proj{\mbox{\rm Proj} \,}
\def\Ker{\mbox{\rm Ker} \,}
\def\qed{$\Box$}
\theoremstyle{plain}
\newtheorem{theorem}{Theorem}[section]
\newtheorem{cor}[theorem]{Corollary}
\newtheorem{prop}[theorem]{Proposition}
\newtheorem{lem}[theorem]{Lemma}
\theoremstyle{definition}
\newtheorem{df}[theorem]{Definition}
\newtheorem{dfprop}[theorem]{Definition and Proposition}
\newtheorem{ex}[theorem]{Example}
\newtheorem{rem}[theorem]{Remark}
\newtheorem{ques}[theorem]{Question}
\begin{document}

\title{Syzygy Theoretic Approach to Horrocks-type Criteria for Vector Bundles}
\author{Chikashi Miyazaki
\thanks{
Partially supported by Grant-in-Aid
for Scientific Research (C) (21K03167)
Japan Society for the Promotion of Science.  
{\it Mathematics Subject Classification}. 13H10, 14F05, 14J60.
{\it Keywords and Phrases}.
Horrocks criterion, Buchsbaum ring, Null-correlation bundle, Syzygy,
Castelnuovo-Mumford regularity}}

\maketitle

\begin{abstract}
This paper studies a variant of Horrocks-type criteria for vector bundles
mainly through a syzygy theoretic approach.
Starting with 
sketching splitting criteria for ACM and
Buchsbaum bundles on projective space,  
we try to give new sights of an investigation of
quasi-Buchsbaum bundles.
Not only our result characterizes the null-correlation
bundle on ${\mathbb P}^n$, but also 
classifies
vector bundles on ${\mathbb P}^3$ with
simple intermediate cohomologies
in terms of standard system of parameters and the corresponding
skew-symmetric matrices.
Our approach leads to an extended study
of quasi-Buchsbaum bundles on ${\mathbb P}^n$ with 
small intermediate cohomologies. 
\end{abstract}

\section{Introduction}

The purpose of this paper is to study Horrocks-type criteria for
vector bundles on projective space, especially to clarify
the structure of a vector bundle on ${\mathbb P}^n$
as a quasi-Buchsbaum graded module over the polynomial ring.
Horrocks' celebrated theorem \cite{Ho} says that a vector
bundle on projective space without intermediate cohomologies
is isomorphic to a direct sum of line bundles, which
is based on the categorical equivalence
that a stable equivalence class of vector bundles ${\mathcal E}$
corresponds
to $\tau_{>0}\tau_{<n}{\mathbb R}\Gamma_*({\mathcal E})$.
On the other hand, a Buchsbaum vector bundle on projective space
is obtained to be isomorphic to a direct sum of sheaves of differential
 $p$-form with balanced twist by Chang \cite{C} and Goto \cite{G}.
In order to study the structure of indecomposable quasi-Buchsbaum bundles,
we will construct a free resolution
coming from its nonvanishing intermediate cohomologies
through the behaviour of system of parameters of the corresponding
module.

In Section~2, we sketch Horrocks-type criteria for ACM and Buchsbaum 
bundles on projective space, especially more surveys on Buchsbaum basics,
not only as an extended introduction, but also
for preparating useful methods such as the Castelnuovo-Mumford regularity
for later sections.

In Section~3, we study a quasi-Buchsbaum vector bundle
on projective space. A vector bundle ${\mathcal E}$ on 
${\mathbb P}^n = {\rm Proj}\, S$ is quasi-Buchsbaum
if ${\displaystyle {\mathfrak m}
{\rm H}^i_* ({\mathcal E}) = 0}$ for $1 \le i \le n-1$ as graded $S$-modules.
There are some structure theorems for quasi-Buchsbaum
bundles of rank 2, say Ellia-Sarti \cite{ES}, Chang \cite{Chang2000},
Kumar-Rao \cite{KR} and Kumar-Peterson-Rao \cite{KPR}
from geometric
interests, see also
Malaspina-Rao \cite{MR2}. Now
we are rather interested in investigating
the algebraic structure of quasi-Buchsbaum modules.
In (\ref{mainth1}) we describe
a quasi-Buchsbaum bundle ${\mathcal E}$ of arbitrary rank 
on ${\mathbb P}^n$ with
${\rm H}^i_*({\mathcal E}) = 0$ for $2 \le i \le n-2$, which
characterizes a null-correlation bundle.

In Section~4, we will take a syzygy theoretic approach for
quasi-Buchsbaum bundles on ${\mathbb P}^3$,
giving somewhat new method to study a free resolution
of vector bundles with simple nonvanishing cohomologies.
Although some results could be worked in general,
we will focus on an experimental study of vector bundles
on ${\mathbb P}^3$ with $\dim_k {\rm H}^1_*({\mathcal E})=
\dim_k{\rm H}^2_*({\mathcal E})=1$, which shows an interesting relationship
from the viewpoint of commutative algebra. Our main result,
Theorem~\ref{mainth}, classifies such vector bundles.
By the behaviour of standard linear systems of parameters, see,
e.g, Trung \cite{T},
we consider a special class of 
of quasi-Buchsbaum bundles as `nonstandard-Buchsbaum',
which characterizes a null-correlation bundle.
We will study the structure of vector
bundles with simple intermediate cohomologies
by using spectral sequences developed in \cite{M1989,MTrieste,M2019}
and constructing free resolutions arising from nonvanishing intermediate
cohomologies.  Moreover, in Theorem~\ref{pseudo-Buchsbaum thm}
and the following remark, the behaviour of system of parameters is shown to
determine the structure of given vector bundles, especially its
hyperplane section property.

In Section~5, what interests us in this context is to classify
a quasi-Buchsbaum bundle ${\mathcal E}$
on ${\mathbb P}^n$
with 
${\rm H}^i_*({\mathcal E}) = 0$ for $1 \le i \le n-1$, $i \ne j_1,j_2$
for given $j_1, j_2$ with
$1 \le j_1 < j_2 \le n-1$.
In particular, in case $\dim_k {\rm H}^1_*({\mathcal E})=
\dim_k{\rm H}^2_*({\mathcal E})=1$, we give several questions and answers
by 
following the flow of the previous section for
$j_2 = j_1 +1$, especially $j_1=1$ and $j_2=2$.

In Section~6, we describe
an application of a syzygy theoretic method to vector bundles
on multiprojective space.

To a large extent, we follow the terminology of algebraic geometry, but where necessary we use concepts from commutative algebra.
The author would like to thank Hajime Kaji, Atsushi Noma and
Ei-ichi Sato for their valuable comments.

\section{Horrocks criterion for ACM and Buchsbaum bundles on projective space}

Let ${\mathcal E}$ be a vector bundle on ${\mathbb P}^n = {\rm Proj}\, S$,
where 
$S = k[x_0,\cdots,x_n]$ and ${\mathfrak m} = (x_0,\cdots,x_n)$.
Let us wirte $M = \Gamma_*({\mathcal E}) = \oplus_{\ell \in {\mathbb Z}}
\Gamma({\mathcal E}(\ell))$ as a graded $S$-module.
 Note that $\dim M = n + 1$, 
${\rm H}^0_{\mathfrak m}(M) = {\rm H}^1_{\mathfrak m}(M) =0$,
${\mathcal E} = \widetilde{M}$ and 
${\rm H}^i_*({\mathcal E}) = {\rm H}^{i+1}_{\mathfrak m}(M)$
for $i \ge 1$. 
Let us start with the splitting of ACM bundles, see, e.g.,
Ottaviani \cite{Ot}.

\begin{df}
\label{ACMdfn}
A vector bundle ${\mathcal E}$ on ${\mathbb P}^n$ is called an
ACM bundle if 
${\displaystyle 
{\rm H}^i_* ({\mathcal E}) = 0}$ for $1 \le i \le n-1$,
\end{df}

\begin{theorem}[Horrocks \cite{Ho}]
\label{Horrocks}
An ACM bundle ${\mathcal E}$ of rank $r$ on ${\mathbb P}^n$ is isomorphic
to a direct sum of line bundles, that is,
${\mathcal E} \cong \oplus_{i=1}^r {\mathcal O}_{{\mathbb P}^n}(\ell_i)$
for some $\ell_1,\cdots,\ell_r \in {\mathbb Z}$.
In other words, a Cohen-Macaulay graded $S$-modlule
is graded free.
\end{theorem}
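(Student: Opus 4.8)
The plan is to transport the problem from ${\mathbb P}^n$ to the graded module $M = \Gamma_*({\mathcal E})$ and then invoke the Auslander--Buchsbaum formula over the regular ring $S$. First I would collect what has already been recorded above: $M$ is a finitely generated graded $S$-module (by Serre's theorem), $\dim M = n+1$, $\widetilde{M} = {\mathcal E}$, ${\rm H}^0_{\mathfrak m}(M) = {\rm H}^1_{\mathfrak m}(M) = 0$, and ${\rm H}^i_*({\mathcal E}) = {\rm H}^{i+1}_{\mathfrak m}(M)$ for $i \ge 1$. Hence the ACM hypothesis ${\rm H}^i_*({\mathcal E}) = 0$ for $1 \le i \le n-1$ translates into ${\rm H}^j_{\mathfrak m}(M) = 0$ for $2 \le j \le n$, and together with the vanishing of ${\rm H}^0_{\mathfrak m}$ and ${\rm H}^1_{\mathfrak m}$ this gives ${\rm H}^j_{\mathfrak m}(M) = 0$ for all $j < n+1 = \dim M$. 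Therefore $\depth M = \dim M = n+1$; that is, $M$ is a maximal Cohen--Macaulay graded $S$-module.

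Next I would use that $S = k[x_0,\dots,x_n]$ is a regular graded ring with $\depth S = n+1$, so the graded Auslander--Buchsbaum formula gives
\[
{\rm pd}_S M + \depth M = \depth S = n+1,
\]
whence ${\rm pd}_S M = 0$, so $M$ is a finitely generated projective graded $S$-module, hence graded free: a minimal graded free resolution of $M$ has length $0$, so $M \cong \bigoplus_{i=1}^r S(\ell_i)$ for some $\ell_1,\dots,\ell_r \in {\mathbb Z}$, the number of summands being $r$ because ${\rm rank}\, {\mathcal E} = r$. Applying the sheafification functor $\widetilde{(\,\cdot\,)}$ yields ${\mathcal E} = \widetilde{M} \cong \bigoplus_{i=1}^r {\mathcal O}_{{\mathbb P}^n}(\ell_i)$, which is the assertion; the parenthetical reformulation is exactly the statement that over the regular ring $S$ a maximal Cohen--Macaulay graded module has projective dimension zero, i.e.\ is graded free.

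An alternative is a downward induction on $n$: for a general hyperplane $H \cong {\mathbb P}^{n-1}$, restricting $0 \to {\mathcal E}(-1) \to {\mathcal E} \to {\mathcal E}|_H \to 0$ and chasing the long exact cohomology sequence shows ${\mathcal E}|_H$ is again ACM on ${\mathbb P}^{n-1}$ and that ${\rm H}^0({\mathcal E}(m)) \to {\rm H}^0({\mathcal E}|_H(m))$ is surjective for all $m$ (using ${\rm H}^1_*({\mathcal E}) = 0$); by induction ${\mathcal E}|_H$ splits, and lifting a splitting along those surjections and cutting down by Nakayama produces the splitting on ${\mathbb P}^n$. Module-theoretically this is just reduction of $M$ by a general linear form followed by the usual Nakayama lifting, so it is a hands-on reproof of Auslander--Buchsbaum in this case. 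Either way, the one genuinely substantive step — the step I would be most careful with — is the passage from ``the intermediate cohomology of ${\mathcal E}$ vanishes'' to ``$M$ is maximal Cohen--Macaulay'': the automatic $\depth M \ge 2$ built into $\Gamma_*$ is precisely what lets the cohomological hypothesis cover the full range $0 \le j \le n$, while finite generation of $\Gamma_*({\mathcal E})$ and regularity of $S$ are the ambient facts that make the homological conclusion legitimate.
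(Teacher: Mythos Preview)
Your main argument is correct and coincides with the paper's second proof, which simply cites the Auslander--Buchsbaum formula; your write-up supplies the details (translating the ACM hypothesis into $\depth M = n+1$ via local cohomology, then concluding ${\rm pd}_S M = 0$) that the paper leaves implicit. Your alternative inductive sketch is close in spirit to the paper's first proof, though there the isomorphism on the hyperplane is lifted by showing ${\rm Ext}^1_{{\mathbb P}^n}({\mathcal F},{\mathcal E}(-1))=0$ and then checking that the determinant of the lifted map is a nonzero constant, rather than by a Nakayama-style surjection argument.
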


The first proof, probably best-known, is due to an induction on $n$,  see, e.g., 
\cite[(I, 2.3.1)]{OSS}.  For $n=1$ it is a consequence of Grothendieck theorem.
In case $n \ge 2$, since ${\mathcal E}|_H$ is ACM on $H \cong {\mathbb P}^{n-1}$ 
for a hyperplane $H$ of ${\mathbb P}^n$, we have
${\mathcal E}|_H \cong \oplus {\mathcal O}_H({\ell_i)}$ by the hypothesis
of induction. Let us put ${\mathcal F} = \oplus {\mathcal O}_{{\mathbb P}^n}({\ell_i)}$.
Then we extend $\psi: {\mathcal F}|_H \cong {\mathcal E}|_H$ to a morphism
$\varphi : {\mathcal F} \to  {\mathcal E}$
by the exact sequence 
${\rm Hom}_{{\mathbb P}^n}({\mathcal F}, {\mathcal E})
\to {\rm Hom}_H({\mathcal F}|_H, {\mathcal E}|_H)
\to  {\rm Ext}^1_{{\mathbb P}^n}({\mathcal F}, {\mathcal E}(-1)) = 0$
\noindent 
from the ACM assumption. Since ${\rm det}\, \varphi \in 
\Gamma({\mathbb P}^n, (\wedge^r{\mathcal F})^{\vee} \otimes (\wedge^r{\mathcal E}))
\cong k$ does not vanish on $H$,
we see $\varphi$ is an isomorphism.
\hfill \qed
\medskip

The second proof is 
a consequence of the Auslander-Buchsbaum theorem \cite{AB},
which says that a finitely generated
module $M$ over a Noetherian local ring $R$ with ${\rm proj \, dim}\, M < \infty$
satisfies ${\rm depth}\, M + {\rm proj  \, dim}\, M = {\rm depth}\, R$,
 see, e.g., \cite[(19.1)]{Mat}. 
\hfill \qed
\medskip

The third proof illustrates an interesting application of
basic properties of the Castelnuovo-Mumford regularity.
In fact, the Horrocks theorem immediately follows from Lemma~\ref{ACM}.
\hfill \qed

\begin{dfprop}
\label{CMregularity}
A coherent sheaf ${\mathcal F}$ on ${\mathbb P}^n$ is called $m$-regular 
if ${\rm H}^i({\mathbb P}^n, {\mathcal F}(m-i)) = 0$ for $i \ge 1$.
If ${\mathcal F}$ is $m$-regular, then ${\mathcal F}$ is $(m+1)$-regular
and globally generated, see \cite{Mum,E}. The Castelnuovo-Mumford regularity
${\rm reg}\, {\mathcal F}$ is the minimal integer $m$ such that ${\mathcal F}$
is $m$-regular.
\end{dfprop}

\begin{lem}
\label{ACM}
Let ${\mathcal E}$ a vector bundle on ${\mathbb P}^n$.
Assume that ${\mathcal E}$ is $m$-regular and
${\rm H}^n({\mathcal E}(m-1-n)) \ne 0$ for an integer $m$.
Then ${\mathcal O}_{{\mathbb P}^n}(-m)$ is a direct summand of ${\mathcal E}$.
\end{lem}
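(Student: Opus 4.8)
The plan is to exploit the $m$-regularity of ${\mathcal E}$ to produce a section of ${\mathcal E}(m)$ and then use the nonvanishing of ${\rm H}^n({\mathcal E}(m-1-n))$, equivalently (by Serre duality) a nonvanishing ${\rm Hom}$-group, to produce a retraction, so that ${\mathcal O}_{{\mathbb P}^n}(-m)$ splits off as a direct summand. First I would twist the whole situation by ${\mathcal O}(m)$ and replace ${\mathcal E}$ by ${\mathcal F} := {\mathcal E}(m)$, so that ${\mathcal F}$ is $0$-regular (hence globally generated by Definition and Proposition~\ref{CMregularity}) and ${\rm H}^n({\mathcal F}(-1-n)) \ne 0$. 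By Serre duality this last condition reads ${\rm Hom}_{{\mathbb P}^n}({\mathcal F}, {\mathcal O}_{{\mathbb P}^n}(-1)) = {\rm H}^0({\mathcal F}^\vee(-1))^\vee$ dual to ${\rm H}^n({\mathcal F}(-1)\otimes\omega_{{\mathbb P}^n}) = {\rm H}^n({\mathcal F}(-1-n))$, hence there is a nonzero map $\beta : {\mathcal F} \to {\mathcal O}_{{\mathbb P}^n}(-1)$; equivalently a nonzero section $s$ of ${\mathcal F}^\vee(-1)$, i.e. a nowhere-vanishing-free... — more precisely, a section of ${\mathcal F}^\vee(-1)$.

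Next I would exhibit the inclusion going the other way. Since ${\mathcal F}$ is globally generated, so is its restriction to any line, and I will aim to build a sub-line-bundle ${\mathcal O}_{{\mathbb P}^n} \hookrightarrow {\mathcal F}$ whose composite with $\beta$ (after a suitable twist back) is an isomorphism onto a summand ${\mathcal O}_{{\mathbb P}^n}(-1)$ of... Actually the cleaner route: the nonzero map $\beta: {\mathcal F} \to {\mathcal O}(-1)$ dualizes to a nonzero map ${\mathcal O}(1) \to {\mathcal F}^\vee$, i.e. a nonzero section of ${\mathcal F}^\vee(-1)$, equivalently a nonzero map $\gamma : {\mathcal O} \to {\mathcal F}^\vee(-1) = \mathcal{H}om({\mathcal F}, {\mathcal O}(-1))$ — but a global homomorphism ${\mathcal F}\to{\mathcal O}(-1)$ is exactly such a $\beta$, so this is circular. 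The substantive point is: I want a section of ${\mathcal F}$ (coming from global generation) pairing nontrivially with $\beta$. Here is where I would invoke that ${\mathcal F}$ is $0$-regular and hence $\omega_{{\mathbb P}^n}\otimes{\mathcal F}^\vee$-type vanishing forces $\beta$ to be, fiberwise over a general point, a split surjection ${\mathcal F}\twoheadrightarrow{\mathcal O}(-1)$; then one shows the kernel ${\mathcal K}$ fits in $0 \to {\mathcal K} \to {\mathcal F} \to {\mathcal O}(-1) \to 0$ and that ${\rm Ext}^1({\mathcal O}(-1),{\mathcal K}) = {\rm H}^1({\mathcal K}(1)) = 0$ using that ${\mathcal K}$ inherits enough regularity from ${\mathcal F}$, so the sequence splits. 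Untwisting gives ${\mathcal E} \cong {\mathcal O}(-m) \oplus {\mathcal K}(-m)$.

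The main obstacle, and the step I would spend the most care on, is verifying that the nonzero map $\beta : {\mathcal F} \to {\mathcal O}_{{\mathbb P}^n}(-1)$ is surjective (equivalently that the dual section of ${\mathcal F}^\vee(-1)$ is nowhere vanishing), so that its kernel is a vector bundle and the splitting argument applies. A nonzero section of ${\mathcal F}^\vee(-1)$ could a priori vanish on a divisor; the resolution is to instead use the composition with a well-chosen section of ${\mathcal F}$ produced by global generation: if $t \in {\rm H}^0({\mathcal F})$ is a section with $\beta\circ t \ne 0$ (such $t$ exists since $\beta \ne 0$ and ${\rm H}^0({\mathcal F})$ generates ${\mathcal F}$), then $\beta \circ t \in {\rm H}^0({\mathcal O}(-1)) = 0$ — so this naive pairing vanishes for degree reasons, which tells us $\beta$ and $t$ must be arranged through a twist. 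The correct bookkeeping is: the surjectivity of $\beta$ onto ${\mathcal O}(-1)$ at a point $p$ fails only along the locus where the corresponding section of ${\mathcal F}^\vee(-1)$ vanishes; one rules this out by a Chern-class / determinant argument exactly analogous to the ``${\rm det}\,\varphi$ does not vanish'' step in the first proof of Theorem~\ref{Horrocks} above, noting that ${\mathcal O}(-1)$ is a line bundle so a nonzero map to it from a globally generated bundle is automatically surjective wherever the relevant minor is nonzero, and $0$-regularity controls that this minor is nonzero on a dense open set whose complement has codimension $\ge 1$; pushing to codimension $\ge 2$ and using reflexivity/local freeness closes the gap. Once surjectivity is in hand, the splitting of $0 \to {\mathcal K} \to {\mathcal F} \to {\mathcal O}(-1) \to 0$ via the vanishing ${\rm H}^1({\mathcal K}(1)) = 0$ is routine, completing the proof.
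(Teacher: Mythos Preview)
Your Serre duality computation is off by one twist, and that single error derails the rest of the argument. With ${\mathcal F}={\mathcal E}(m)$ $0$-regular and ${\rm H}^n({\mathcal F}(-n-1))\ne 0$, Serre duality gives
\[
{\rm H}^n({\mathcal F}(-n-1))^\vee \;\cong\; {\rm H}^0\bigl(({\mathcal F}(-n-1))^\vee\otimes\omega_{{\mathbb P}^n}\bigr)
\;=\;{\rm H}^0({\mathcal F}^\vee(n+1)(-n-1))
\;=\;{\rm H}^0({\mathcal F}^\vee),
\]
hence a nonzero map $\beta:{\mathcal F}\to{\mathcal O}_{{\mathbb P}^n}$, \emph{not} ${\mathcal O}_{{\mathbb P}^n}(-1)$. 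Your later observation that ``$\beta\circ t\in{\rm H}^0({\mathcal O}(-1))=0$ --- so this naive pairing vanishes for degree reasons'' is precisely the symptom of this miscount, not a genuine obstruction.

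Once the twist is corrected the argument is immediate, and is exactly what the paper does. Global generation of ${\mathcal F}$ yields a surjection $\psi:{\mathcal O}_{{\mathbb P}^n}^{\oplus N}\to{\mathcal F}$; the composite $\beta\circ\psi:{\mathcal O}_{{\mathbb P}^n}^{\oplus N}\to{\mathcal O}_{{\mathbb P}^n}$ is nonzero (since $\psi$ is onto and $\beta\ne 0$), so some component ${\mathcal O}_{{\mathbb P}^n}\to{\mathcal O}_{{\mathbb P}^n}$ is a nonzero scalar. This gives a section $t:{\mathcal O}_{{\mathbb P}^n}\to{\mathcal F}$ with $\beta\circ t$ an isomorphism, so $t$ is a split monomorphism and ${\mathcal O}_{{\mathbb P}^n}$ is a direct summand of ${\mathcal F}$, i.e.\ ${\mathcal O}_{{\mathbb P}^n}(-m)$ is a direct summand of ${\mathcal E}$. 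No surjectivity of $\beta$, no ${\rm Ext}^1$ computation, and no Chern-class or codimension argument is needed.
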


\begin{proof}
Since ${\mathcal E}$ is $m$-regular, ${\mathcal E}(m)$ is globally generated.
So we have a surjective map
$\psi: {\mathcal O}_{{\mathbb P}^n}^{\oplus}(-m) \to {\mathcal E}$. 
By Serre duality we have ${\rm H}^0({\mathcal E}^{\vee}(-m)) \ne 0$
from ${\rm H}^n({\mathcal E}(m-1-n)) \ne 0$, which
gives a nonzero map
$\varphi : {\mathcal E} \to {\mathcal O}_{{\mathbb P}^n}(-m)$.
Since $\varphi \circ \psi$ is a nonzero map, 
${\mathcal O}_{{\mathbb P}^n}(-m)$ is a direct summand of ${\mathcal E}$.
\end{proof}

\begin{rem}
\label{Evans-Griffith}
Evans-Griffith \cite{EG} asserts that
if a vector bundle ${\mathcal E}$ on ${\mathbb P}^n$ satisfies
${\rm H}^i_*({\mathcal E}) = 0$ for $1 \le i \le {\rm rank}\, {\mathcal E} -1$,
then ${\mathcal E}$ is a direct sum of line bundle.
See \cite{Ein1} and
\cite[(7.3.1)]{La} for the proof by the Castelnuovo-Mumford regularity
and Le Potier Vanishing Theorem.
\end{rem}

The fourth proof is based on an idea of Horroccks \cite{Ho}. 
The Horrocks correspondence explains the relationship between
the splitting of vector bundles and intermediate cohomologies,
see Walter\cite{Wa}
and Malaspina-Rao\cite{MR}.

\begin{theorem}[Horrocks, Walter, Malaspina-Rao]
\label{HorrocksCorrespondence}
Let $\underline{\mathbb{VB}}$ be the category of vector bundles on 
${\mathbb P}^n = {\rm Proj}\, S$, $S = k[x_0,\cdots.x_n]$. 
modulo stable equivalence. Here vector bundles ${\mathcal E}$ and ${\mathcal F}$ on
${\mathbb P}^n$ are stable equivalent if there are direct sums of line
bundles ${\mathcal L}$ and ${\mathcal M}$ such that
${\mathcal E}\oplus {\mathcal L} \cong {\mathcal F} \oplus {\mathcal M}$.
Let us write ${\mathbb FinL}$ for the full subcategory of 
$C^{\bullet} \in Ob(D^{\flat}($S$-{\mathbb Mod}))$
such that ${\rm H}^i(C^{\bullet})$ is a finite over $S$ and
${\rm H}^i(C^{\bullet}) = 0$, $0 < i < n$. 

A functor $\tau_{>0}\tau_{<n} {\mathbb R}\Gamma_* : \underline{\mathbb{VB}} \to
{\mathbb FinL}$ gives an equivalence of the categories.
\end{theorem}

From the Horrocks correspondence (\ref{HorrocksCorrespondence}),
the vanishing of the intermediate cohomologies 
of a vector bundle  ${\mathcal E}$
on  ${\mathbb P}^n$, that is,
$\tau_{>0}\tau_{<n} {\mathbb R}\Gamma_*({\mathcal E}) = 0$
implies that ${\mathcal E}$ is isomorphic to a direct sum of line bundles,
which gives the Horrocks' theorem.
\hfill \qed \medskip

Next we will explain the Chang-Goto structure theorem
(\ref{Chang-Goto}) of Buchsbaum
bundles on projective space.
We will do the groundwork to extend
their result towards the structure theorem of quasi-Buchsbaum bundles.
Now let us begin with the definition and basic properties
of Buchsbaum rings, see, e.g., Schenzel \cite{Sch} and St\"uckrad-Vogel \cite{SV}.

\begin{dfprop}
\label{BuchsbaumSV}
A graded $S$-module $M$ with $\dim M = d$ is called as a Buchsbaum module
if the following equivalent conditions are satisfied.
\begin{itemize}
\item[{\rm (i)}]
$\ell(M/{\mathfrak q}M) - e({\mathfrak q};M)$ does not depend on
the choice of any homogeneous parameter ideal
${\mathfrak q} = (y_1,\cdots,y_d)$.
\item[{\rm (ii)}]
For any homogeneous system $y_1,\cdots,y_d$ of parameters \par
$ {\mathfrak m} {\rm H}^j_{\mathfrak m}(M/(y_1,\cdots,y_i)M) = 0$ for
$0 \le i \le d-1, \, 0 \le j \le d-i-1$.
\item[{\rm (iii)}]
$\tau_{<d}{\mathbb R}\Gamma_{\mathfrak m}(M)$ is isomorphic to a complex of
$k$-vector spaces in $D^{\flat}($S$-{\mathbb Mod})$.
\end{itemize}
\end{dfprop}

\begin{df}
Let $S=k[x_0,\cdots,x_n]$ be the polynomial ring over a field $k$ with
${\mathfrak m} = (x_0,\cdots,x_n)$.
A vector bundle ${\mathcal E}$ on
${\mathbb P}^n = {\rm Proj}\, S$ is called a Buchsbaum bundle if 
${\displaystyle {\mathfrak m}{\rm H}^i_* ({\mathcal E}|_L)
 = 0, \, 1 \le i \le r-1}$
for any $r$-plane $L (\subseteq {\mathbb P}^n)$, $r=1,\cdots,n$.
\end{df}

\begin{rem}
A vector bundle ${\mathcal E}$ on
${\mathbb P}^n = {\rm Proj}\, S$ is Buchsbaum
if and only of a graded $S$-module $M = \Gamma_*({\mathcal E})$
is Buchsbaum.
\end{rem}

The Koszul complex $K_{\bullet} = K_{\bullet}((x_0,\cdots,x_n);S)$
gives the minimal free resolution of a graded $S$-module $k= S/{\mathfrak m}$. Then
$\Omega^{p-1}_{{\mathbb P}^n} = \widetilde{E_p}$, where $E_p$ is the
$p$-th syzygy of a graded $S$-module $k$. Note that $E_0 \cong k$,
$E_1 \cong {\mathfrak m}$ and $E_{n+1} \cong S(-n-1)$.

\begin{theorem}[Chang \cite{C}, Goto \cite{G}]
\label{Chang-Goto}
A Buchsbaum bundle ${\mathcal E}$ on ${\mathbb P}^n$ is isomorphic
to a direct sum of sheaves of differential form, that is,
${\mathcal E} \cong \oplus_{i=1}^r \Omega_{{\mathbb P}^n}^{k_i}(\ell_i)$.

In other words, a graded $S$-module $M = \Gamma_*({\mathcal E})$
is isomorphic to $\oplus E_{p_i}(\ell_i)$.
\end{theorem}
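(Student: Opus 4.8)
The plan is to obtain the structure theorem from the Horrocks correspondence (Theorem~\ref{HorrocksCorrespondence}) together with the homological characterization of the Buchsbaum property in \ref{BuchsbaumSV}(iii), in exact parallel with the fourth proof of Theorem~\ref{Horrocks}: there the point was that the object of ${\bf FinL}$ attached to an ACM bundle vanishes, and here the point should be that the object attached to a Buchsbaum bundle is a direct sum of shifted copies of $k$, i.e.\ the object associated to a direct sum of twisted differential forms.

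First I would restate the hypothesis homologically. Put $M = \Gamma_*({\mathcal E})$, so $\dim M = n+1$ and $M$ is a Buchsbaum module by the Remark identifying Buchsbaum bundles with Buchsbaum modules. By \ref{BuchsbaumSV}(iii), $\tau_{<n+1}\mathbb{R}\Gamma_{\mathfrak m}(M)$ is isomorphic in $D^{\flat}(S\text{-}{\bf Mod})$ to a complex all of whose terms are $k$-vector spaces; since any complex of vector spaces over the field $k$ is the direct sum of its cohomologies placed in the appropriate degrees, and the splitting maps, being $k$-linear, are $S$-linear here, we get
\[
\tau_{<n+1}\mathbb{R}\Gamma_{\mathfrak m}(M)\ \cong\ \bigoplus_{j=2}^{n}\mathrm{H}^{j}_{\mathfrak m}(M)[-j]
\]
in $D^{\flat}(S\text{-}{\bf Mod})$, with each $\mathrm{H}^{j}_{\mathfrak m}(M)$ a finite-dimensional graded $k$-vector space, say $\mathrm{H}^{j}_{\mathfrak m}(M)\cong\bigoplus_{\ell}k(-\ell)^{\oplus a_{j,\ell}}$. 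Using $\mathrm{H}^{i}_*({\mathcal E})=\mathrm{H}^{i+1}_{\mathfrak m}(M)$ for $i\ge 1$, this says $\tau_{>0}\tau_{<n}\mathbb{R}\Gamma_*({\mathcal E})\cong\bigoplus_{i=1}^{n-1}\bigoplus_{\ell}k(-\ell)^{\oplus a_{i,\ell}}[-i]$ as an object of ${\bf FinL}$. Next I would produce the model bundle: since $\Omega^{p-1}_{{\mathbb P}^n}=\widetilde{E_p}$, Bott's formula gives $\mathrm{H}^{i}_*(\Omega^{p}_{{\mathbb P}^n})=0$ for $0<i<n$, $i\ne p$, and $\mathrm{H}^{p}_*(\Omega^{p}_{{\mathbb P}^n})=k$ concentrated in degree $0$, so $\tau_{>0}\tau_{<n}\mathbb{R}\Gamma_*(\Omega^{p}_{{\mathbb P}^n}(-\ell))\cong k(-\ell)[-p]$. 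Setting ${\mathcal F}:=\bigoplus_{i=1}^{n-1}\bigoplus_{\ell}\Omega^{i}_{{\mathbb P}^n}(-\ell)^{\oplus a_{i,\ell}}$ and using additivity of $\tau_{>0}\tau_{<n}\mathbb{R}\Gamma_*$, we obtain $\tau_{>0}\tau_{<n}\mathbb{R}\Gamma_*({\mathcal F})\cong\tau_{>0}\tau_{<n}\mathbb{R}\Gamma_*({\mathcal E})$, so by the equivalence of categories in Theorem~\ref{HorrocksCorrespondence}, ${\mathcal E}$ and ${\mathcal F}$ are stably equivalent: ${\mathcal E}\oplus{\mathcal L}\cong{\mathcal F}\oplus{\mathcal M}$ for some direct sums of line bundles ${\mathcal L},{\mathcal M}$.

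The remaining, and only genuinely delicate, step is to promote stable equivalence to an honest isomorphism. Here I would invoke the Krull--Schmidt property of the category of coherent sheaves on ${\mathbb P}^n$ (endomorphism rings being finite-dimensional $k$-algebras): the multisets of indecomposable summands of ${\mathcal E}\oplus{\mathcal L}$ and of ${\mathcal F}\oplus{\mathcal M}$ coincide. Each $\Omega^{i}_{{\mathbb P}^n}(-\ell)$ with $1\le i\le n-1$ is indecomposable (even stable) and has rank $>1$, so is not a line bundle; hence every such summand of ${\mathcal F}$ is already a summand of ${\mathcal E}$, and cancelling them leaves a bundle that is a summand of ${\mathcal M}$, hence a direct sum of line bundles. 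Thus ${\mathcal E}\cong\bigoplus_i\Omega^{k_i}_{{\mathbb P}^n}(\ell_i)$, reading ${\mathcal O}(\ell)=\Omega^{0}_{{\mathbb P}^n}(\ell)$, and taking $\Gamma_*$ gives the module statement $M\cong\bigoplus_i E_{p_i}(\ell_i)$. I expect the main obstacle to be precisely this cancellation bookkeeping; a secondary point requiring care is the reduction of the Buchsbaum hypothesis on ${\mathcal E}$ (restriction to every linear subspace) to condition \ref{BuchsbaumSV}(iii) for the single module $M$, which is exactly the cited equivalence ``Buchsbaum bundle $\Leftrightarrow$ $\Gamma_*$ Buchsbaum module.'' An alternative route, following the original arguments of Goto and Chang, replaces all of this by an induction on $n$ via restriction to a hyperplane, using a surjectivity criterion for Buchsbaum modules together with the technical Lemma~\ref{keylem}; there the crux is instead the inductive step of extending a splitting from the hyperplane section.
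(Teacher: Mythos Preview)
Your argument is correct and coincides with the paper's third proof of (\ref{Chang-Goto}), attributed to Yoshino: use \ref{BuchsbaumSV}(iii) to see that $\tau_{>0}\tau_{<n}\mathbb{R}\Gamma_*({\mathcal E})$ is a direct sum of shifted copies of $k$, match it with the corresponding sum of $\Omega^p_{{\mathbb P}^n}(\ell)$'s via the Horrocks correspondence (\ref{HorrocksCorrespondence}), and conclude. Your Krull--Schmidt paragraph makes explicit the passage from stable equivalence to an honest isomorphism that the paper leaves implicit (line bundles being $\Omega^0_{{\mathbb P}^n}(\ell)$), and the alternative inductive route you sketch at the end is exactly the content of the paper's first and second proofs.
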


\begin{ex}
\label{BuchsbaumCurve}
The structure of ACM and Buchsbaum modules over
the polynomial ring is given by (\ref{Chang-Goto}).
Let us describe explicitly the structure of geometric examples such as
rational curves $X$ and $Y$ of degree 3 and 4 in 
${\mathbb P}^3$. Then (i) $X$ is ACM and (ii) $Y$ is Buchsbaum
with ${\rm H}^1_*({\mathcal I}_{Y/{\mathbb P}^3}) \cong k(-4)$.
\begin{enumerate}
\item[{\rm (i)}]
Let us consider
a morphism $\varphi : X \to {\mathbb P}^1$ of rank 3
by $S=k[s^3,t^3] \subset k[s^3,s^2t,st^3,t^3]$,
Then we have an ACM module $M = S/\Gamma_*({\mathcal I}_X)$ 
over the polynomial ring $S$, so $M$ is graded free as
$M = S \cdot 1 \oplus S \cdot s^2t
\oplus S \cdot st^2$. 
\item[{\rm (ii)}]
Let us consider
a morphism $\varphi : Y \to {\mathbb P}^1$ of rank 4
by $T=k[s^4,t^4] \subset k[s^4,s^3t,st^3,t^4]$.
Then we have a (not ACM) Buchsbaum module $N = T/\Gamma_*({\mathcal I}_Y)$ 
over the polynomial ring $T$, so a (not free) Buchsbaum module
 $N$ is written as
$N = T \cdot 1 \oplus T \cdot s^3t
\oplus T \cdot st^3 \oplus L$, where
$L = (s^4,t^4)T$ is the first syzygy module of $T$. 
\end{enumerate}
\end{ex}

Now we will sketch several proofs of (\ref{Chang-Goto}).
The first proof \cite{G} is based 
on `surjectivity criterion' for Buchsbaum
modules and technical lemma (\ref{keylem}).
The second proof \cite{C} studies a map from $\Omega^p_{{\mathbb P}^n}(p)$ to
${\mathcal O}_{{\mathbb P}^n}$ in Case B, which has driven us to research
in later sections.
Both  are due to descending induction by ${\rm depth}\, M$ and
${\rm depth}\, M^{\vee}$.
\medskip

The first proof
uses descending induction on $t  = \depth_S M \ge 2$. 
It is clear for the Cohen-Macaulay case, that is, $t = n+1$. Let us assume that $t \le n$.
From a presentation $0 \to N \stackrel{f}{\to} F \stackrel{g}{\to} M \to 0$, where $F$ is
graded free, we see  $N$ is Buchsbaum and $\depth_S N = t + 1$. Then we have
$N \cong \oplus E_{p_i}(\ell_i)$.
By the dual sequence
$0 \to M^{\vee} \to F^{\vee} \to N^{\vee} \stackrel{\partial}{\to} {\rm Ext}_S^1(M,S) \to 0$,
we have short exact sequences
\[ 0 \to M^{\vee} \to F^{\vee} \to W \to 0 \quad {\rm and}
\quad 0 \to W \to N^{\vee} \stackrel{\partial}{\to} {\rm Ext}_S^1(M,S) \to 0. \]
Now we will prove that
$W$ is isomorphic  to a direct sum of some copies of $E_p(\ell)$'s.
Then $\widetilde{M^{\vee}}$ is isomorphic to a direct sum
of sheaves of differential $p$-form with some twist by Lemma~\ref{keylem}, 
and so is ${\mathcal E}$
as desired.

In order to describe the structure of $W$, let us put $N = N' \oplus N''$, where 
$N' \cong \oplus_{{t+1} \le k_i \le n} E_{p_i}(\ell_i)$ and $N'' \cong \oplus E_{n+1}(\ell'_j)$.
Note that ${\rm Ext}_S^1(M,S)$ is a $k$-vector space, that is,
${\mathfrak m}{\rm Ext}_S^1(M,S) = 0$ from the Buchsbaum property. So,
we have only to show $\partial(N'^{\vee}) = 0$, that is, suffice to prove
$\partial (E_j) = 0$ for $j=1,\cdots, n$,
and by local duality, equivalently, ${\rm H}^n_{\mathfrak m}(M) \to {\rm H}^{n+1}_{\mathfrak m}(N)
\to {\rm H}^{n+1}_{\mathfrak m}(N')$ is zero.
From the commutative diagram with exact rows
\[
\begin{array}{ccccc}
  {\rm H}^n(x_0,\cdots,x_n; M) & \to &  {\rm H}^{n+1}(x_0,\cdots,x_n; N)
 & \to &   {\rm H}^{n+1}(x_0,\cdots,x_n; N')  \\
\downarrow  & & \downarrow & & \downarrow  \\
  {\rm H}^n_{\mathfrak m}(M) & \to & {\rm H}^{n+1}_{\mathfrak m}(N)
& \to  & {\rm H}^{n+1}_{\mathfrak m}(N')    \\
\end{array}
\]
and the surjectivity of the left downarrow from the Buchsbaum property, what we need to show
is ${\rm H}^{n+1}(x_0,\cdots,x_n; N') \to  {\rm H}^{n+1}_{\mathfrak m}(N')$ is zero, which
follows from Remark~\ref{gotolem}.
\hfill \qed

\begin{rem}
\label{gotolem}
Let $S=k[x_0,\cdots,x_n]$ be the polynomial ring.
Let $E_j$ be the $j$-th syzygy module. Then
the natural map ${\rm H}^{n+1}(x_0,\cdots,x_n; E_j) \to  {\rm H}^{n+1}_{\mathfrak m}(E_j)$ is zero
for $1 \le j \le n$.
Indeed,
we give another proof of \cite[(2.11)]{G}.
A graded $S$-module
${\rm H}^{n+1}(x_0,\cdots,x_n; E_j) (\cong {\rm Ext}^{n+1}_S(k,E_p)) \cong (E_p/{\mathfrak m}E_p)(n+1)$
has nonzero elements only in degree $-n+p$. On the other hand, ${\rm H}_{\mathfrak m}^{n+1}(E_p)
\cong {\rm Hom}_k(E_{n-p},k)$ has nonzero elements only in degree $\le -n+p-1$. 
Thus the assertion is proved.  \hfill \qed
\end{rem}

\begin{rem}
\label{goto}
The structure theorem also works for $t = {\rm depth}_SM \le 1$, see \cite{G}.
Indeed, in case $t = 0$, it can be reduced to the case $t \ge 1$ because
${\rm H}^0_{\mathfrak m}(M) \cap {\mathfrak m}M = (0)$ implies
that ${\rm H}^0_{\mathfrak m}(M)$ is a direct summand of $M$.
In case $t = 1$, an exact sequence $ 0 \to M \to M^{\vee \vee} \to
{\rm Ext}^1_S(W,S) \to 0$ gives the assertion as in the first proof of (\ref{Chang-Goto}).
\end{rem}

For the second proof, 
let us denote $i({\mathcal E}) (= n + 1 - {\rm depth}\, M^{\vee})$
as the maximal integer $i$ such that ${\rm H}^p_*({\mathcal E}) = 0$,
$i + 1 \le p \le n-1$. We use induction on $i = i({\mathcal E})$. It is clear for $i = 0$.
Let us assume $i \ge 1$. 
If ${\rm H}^1_*({\mathcal E}) \ne 0$, we have a short exact sequence
of vector bundles
$0 \to {\mathcal E} \to {\mathcal F} \to {\mathcal M} \to 0$, where
${\mathcal F}$ is a vector bundle
with ${\rm H}^1_*({\mathcal F}) = 0$ and ${\mathcal M}$ is a direct sum of line
bundles on ${\mathbb P}^n$
by (\ref{lem1}).

The minimal generator of $\Gamma_*({\mathcal F}^{\vee})$
 give a short exact sequence
$0 \to {\mathcal F} \to {\mathcal L} \to {\mathcal K} \to 0$,
 where ${\mathcal L}$ is a sum
of line bundles. Then ${\mathcal K}$ is Buchsbaum with 
$i({\mathcal K}) = i({\mathcal E}) - 1$.
Thus we have ${\mathcal K}$ is isomorphic to a direct sum of $\Omega^{p_j}_{{\mathbb P}^n}(k_j)$'s, and
so is ${\mathcal F}$ by Lemma~\ref{keylem}.

Note that ${\rm Hom}({\Omega}^p_{{\mathbb P}^n}(\ell), {\mathcal O}_{{\mathbb P}^n}) \ne 0$ if and only if
$\ell \le p$. 
We may assume  $k_j \le p_j$ if $p_j \ge 2$. and $k_j < 0$
 if $p_j = 0$,
where
${\mathcal F} = \oplus \Omega_{{\mathbb P}^n}^{p_j}(k_j)$ and ${\mathcal L} = \oplus
{\mathcal O}_{{\mathbb P}^n}(-c_i)^{\oplus \gamma_i}$, $0=c_1<\cdots<c_s$.
\medskip

\noindent
{\bf Case A}.
Let us consider the case $k_j < p_j$ for all $p_j \ge 0$. 
In order to prove the assertion,
the exact sequence
$0 \to {\mathcal E} \to {\mathcal F} \to {\mathcal L} \to 0$
shown to have
has a sequence $0 \to \Omega^1_{{\mathbb P}^n} \to
{\mathcal O}_{{\mathbb P}^n}^{n+1} \to {\mathcal O}_{{\mathbb P}^n} \to 0$
as a direct summand by using only the quasi-Buchsbaum property,
${\mathfrak m}{\rm H}^i_*({\mathcal E}) =0$
for $1 \le i \le n-1$, see \cite[page 330 Case 1]{C} for the details.
\medskip

\noindent
{\bf Case B}.
Let us consider the case $k_j = p_j$ for some $j$, that is, ${\mathcal F}$ has a direct summand of the form
$\Omega_{{\mathbb P}^n}^q(q)$ for some $q>1$. 
In order to prove the assertion,
$\Omega_{{\mathbb P}^n}^q(q) \to {\mathcal O}_{{\mathbb P}^n}$ is shown to be
zero in the map ${\mathcal F} \to {\mathcal L}$ by using
the Buchsbaum property of ${\mathcal E}$, see 
\cite[page 331 Case 2]{C} for the details.
\hfill \qed

\begin{lem}
\label{lem1}
Let ${\mathcal E}$ be a vector bundle on ${\mathbb P}^n$ with
${\rm H}^1_*({\mathcal E}) \ne 0$. Then
there exists a short exact sequence of vector bundles
$0 \to {\mathcal E} \to {\mathcal F} \to {\mathcal L} \to 0$, where
${\mathcal F}$ is a vector bundle
with ${\rm H}^1_*({\mathcal F}) = 0$ and ${\mathcal L}$ is a direct sum of line
bundles on ${\mathbb P}^n$.
\end{lem}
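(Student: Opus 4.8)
The plan is to enlarge ${\mathcal E}$ by a ``universal'' extension along a direct sum of line bundles chosen so as to kill ${\rm H}^1_*({\mathcal E})$. We may assume $n \ge 2$ (for $n \le 1$, ${\rm H}^1_*$ is not an intermediate cohomology and the statement is not needed). Since ${\mathcal E}$ is locally free, $M = \Gamma_*({\mathcal E})$ is a generalized Cohen--Macaulay $S$-module, so ${\rm H}^1_*({\mathcal E}) = {\rm H}^2_{\mathfrak m}(M)$ has finite length; in particular it is a finitely generated graded $S$-module. Choose homogeneous generators $e_1,\dots,e_s$ of ${\rm H}^1_*({\mathcal E})$ with $\deg e_i = -a_i$ and put ${\mathcal L} = \bigoplus_{i=1}^s {\mathcal O}_{{\mathbb P}^n}(a_i)$. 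As ${\mathcal L}$ is a direct sum of line bundles we have ${\rm Ext}^1_{{\mathbb P}^n}({\mathcal L},{\mathcal E}) \cong \bigoplus_{i=1}^s {\rm H}^1({\mathbb P}^n,{\mathcal E}(-a_i)) = \bigoplus_{i=1}^s {\rm H}^1_*({\mathcal E})_{-a_i}$, so the tuple $(e_1,\dots,e_s)$ determines an extension $\xi : 0 \to {\mathcal E} \to {\mathcal F} \to {\mathcal L} \to 0$ of coherent sheaves on ${\mathbb P}^n$.

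The sheaf ${\mathcal F}$ is then a vector bundle, being an extension of a locally free sheaf by a locally free sheaf. Applying $\Gamma_*$ to $\xi$ yields the exact sequence of graded $S$-modules
\[
\cdots \longrightarrow {\rm H}^0_*({\mathcal L}) \stackrel{\delta}{\longrightarrow} {\rm H}^1_*({\mathcal E}) \longrightarrow {\rm H}^1_*({\mathcal F}) \longrightarrow {\rm H}^1_*({\mathcal L}) \longrightarrow \cdots .
\]
Since $n \ge 2$ we have ${\rm H}^1({\mathbb P}^n,{\mathcal O}_{{\mathbb P}^n}(a)) = 0$ for every $a$, hence ${\rm H}^1_*({\mathcal L}) = 0$ and ${\rm H}^1_*({\mathcal F}) \cong {\rm H}^1_*({\mathcal E})/{\rm Im}(\delta)$. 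So it suffices to show that the connecting homomorphism $\delta$ is surjective.

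This is the heart of the matter, and amounts to identifying $\delta$. It is a homomorphism of graded $S$-modules, and ${\rm H}^0_*({\mathcal L}) = \bigoplus_{i=1}^s S(a_i)$; let $\epsilon_i \in S(a_i)_{-a_i}$ be the generator $1$ of the $i$-th summand. Pulling $\xi$ back along the inclusion of the $i$-th summand ${\mathcal O}_{{\mathbb P}^n}(a_i) \hookrightarrow {\mathcal L}$ gives an extension of ${\mathcal O}_{{\mathbb P}^n}(a_i)$ by ${\mathcal E}$ whose class in ${\rm Ext}^1_{{\mathbb P}^n}({\mathcal O}_{{\mathbb P}^n}(a_i),{\mathcal E}) = {\rm H}^1_*({\mathcal E})_{-a_i}$ is exactly $e_i$. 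Comparing the two long exact sequences via naturality, together with the fact that under the identification ${\rm Ext}^1 = {\rm H}^1$ the class of an extension is the image of the constant section $1$ under the boundary map, one obtains $\delta(\epsilon_i) = e_i$ for each $i$. By $S$-linearity of $\delta$ this forces ${\rm Im}(\delta) \supseteq \sum_{i=1}^s S e_i = {\rm H}^1_*({\mathcal E})$, so $\delta$ is surjective and ${\rm H}^1_*({\mathcal F}) = 0$, as required. The only step that is not purely formal is the identification $\delta(\epsilon_i) = e_i$; alternatively one can invoke that the connecting map ${\rm H}^0_*({\mathcal L}) \to {\rm H}^1_*({\mathcal E})$ attached to the extension class $e \in {\rm Ext}^1_{{\mathbb P}^n}({\mathcal L},{\mathcal E}) = {\rm H}^1_*({\mathcal E}\otimes{\mathcal L}^{\vee})$ is, up to sign, cup product with $e$ (through ${\mathcal L}^{\vee}\otimes{\mathcal L}\to{\mathcal O}_{{\mathbb P}^n}$), whose image is visibly the graded submodule generated by $e$.
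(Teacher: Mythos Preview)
Your proof is correct. It differs from the paper's in organization rather than in substance: the paper builds ${\mathcal F}$ by iterated one-step extensions, picking a single nonzero class in ${\rm H}^1({\mathcal E}_i(-\ell_{i+1}))$ at each stage to obtain $0 \to {\mathcal E}_i \to {\mathcal E}_{i+1} \to {\mathcal O}_{{\mathbb P}^n}(\ell_{i+1}) \to 0$ with $\dim_k {\rm H}^1_*({\mathcal E}_{i+1}) = \dim_k {\rm H}^1_*({\mathcal E}_i) - 1$, and then argues separately that the successive quotients ${\mathcal E}_i/{\mathcal E}$ split as direct sums of line bundles (using ${\rm Ext}^1({\mathcal O}(\ell),{\mathcal O}(\ell'))=0$ on ${\mathbb P}^n$ for $n\ge 2$). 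You instead perform the \emph{universal extension} in one shot: choose a homogeneous generating set of ${\rm H}^1_*({\mathcal E})$, take ${\mathcal L}$ to be the corresponding sum of line bundles, and identify the connecting map $\delta$ with cup product by the extension class to see it hits the generators. Your route is a bit more conceptual and avoids the extra splitting argument, at the cost of invoking the standard identification $\delta(\epsilon_i)=e_i$; the paper's route is more hands-on and never needs to name that identification. Both rely on the same two facts for $n\ge 2$: finiteness of ${\rm H}^1_*({\mathcal E})$ and vanishing of ${\rm H}^1_*$ for line bundles.
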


\begin{proof}
Let  $s = \dim  {\rm H}^1_*({\mathcal E}) > 0$.
A nonzero element of
${\rm H}^1({\mathcal E}(-\ell_1))$ gives a short
exact sequences
$ 0 \to {\mathcal E}_1 \to {\mathcal E} \to {\mathcal O}_{{\mathbb P}^n}(\ell_1) \to 0$.
Then $\dim {\rm H}^1_*({\mathcal E}_1) = \dim {\rm H}^1_*({\mathcal E}) -1$.
By repeating this process, we have vector bundles ${\mathcal E}_i$ satisfying
short exact sequences $0 \to {\mathcal E}_i \to {\mathcal E}_{i+1} \to
{\mathcal O}_{{\mathbb P}^n}(\ell_{i+1}) \to 0$, where ${\mathcal E}_0 = {\mathcal E}$ and
$\dim {\rm H}^1_*({\mathcal E}_{i+1}) = \dim {\rm H}^1_*({\mathcal E}_i) -1$ for $i=0,\cdots,s-1$.
Since the exact sequence $0 \to {\mathcal E}_i/{\mathcal E} \to {\mathcal E}_{i+1}/{\mathcal E} \to
{\mathcal O}_{{\mathbb P}^n}(\ell_{i+1}) \to 0$ is inductively shown to split, we see ${\mathcal F}/{\mathcal E}
\cong \oplus_{i=1}^s {\mathcal O}_{{\mathbb P}^n}(\ell_i)$ by taking ${\mathcal F}={\mathcal E}_i$, as desired.
\end{proof}

\begin{lem}{\rm (cf. \cite[(1.3)]{C}, \cite[(3.5.2)]{G})}
\label{keylem}
Let ${\mathcal E}$ be a vector bundle on ${\mathbb P}^n$ with
${\rm H}^1_*({\mathcal E}) = 0$.
Assume that there is an exact sequence $ 0 \to {\mathcal E} \to {\mathcal L} \to {\mathcal F} \to 0$,
where ${\mathcal L}$ is a direct sum of line bundles not being any summand of ${\mathcal E}$,
 and ${\mathcal F} = \oplus_{p_j \ge 1} \Omega_{{\mathbb P}^n}^{p_j}(k_j)$.
Then we have ${\mathcal E} \cong \oplus_{p_j \ge 1} \Omega_{{\mathbb P}^n}^{p_j+1}(k_j)$.
\end{lem}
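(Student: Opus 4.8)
The plan is to recognize both the given exact sequence and a canonical Koszul-type sequence as minimal free presentations of the single graded module $N := \Gamma_*({\mathcal F})$, and then to appeal to the uniqueness of the minimal free cover.

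First I would apply $\Gamma_*$ to $0 \to {\mathcal E} \to {\mathcal L} \to {\mathcal F} \to 0$. Because ${\rm H}^1_*({\mathcal E}) = 0$, this yields a short exact sequence of graded $S$-modules $0 \to M \to P \to N \to 0$ with $M = \Gamma_*({\mathcal E})$, $P = \Gamma_*({\mathcal L})$ graded free, and $N = \Gamma_*({\mathcal F})$. The crucial point is that $P \to N$ is the \emph{minimal} free cover of $N$. If it were not, then $P$ would split as $P \cong Q \oplus F$ with $Q \to N$ still surjective, $F \neq 0$ graded free, and $F \subseteq \ker(P \to N) = M$; the composite $M \hookrightarrow P \twoheadrightarrow F$ is then the identity on $F$, so $F$ is a graded-free direct summand of $M$, and $\widetilde{F}$ is a nonzero direct sum of line bundles that is simultaneously a summand of ${\mathcal E} = \widetilde{M}$ and of ${\mathcal L} = \widetilde{P}$ — contradicting the hypothesis that no summand of ${\mathcal L}$ is a summand of ${\mathcal E}$. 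Hence $M$ is, up to isomorphism, the first syzygy module of $N$.

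Next I would produce the canonical sequence. After reducing to the case $1 \le p_j \le n-1$ (a line-bundle summand $\Omega^n_{{\mathbb P}^n}(k_j)$ of ${\mathcal F}$ simply splits off both ${\mathcal L}$ and ${\mathcal F}$), the Koszul complex of $k$ gives, for each $j$, the exact sequence $0 \to E_{p_j+2}(k_j) \to S(k_j - p_j - 1)^{\oplus \binom{n+1}{p_j+1}} \to E_{p_j+1}(k_j) \to 0$, which is the beginning of a minimal free resolution; sheafifying and summing over $j$ yields $0 \to {\mathcal E}_0 \to {\mathcal L}_0 \to {\mathcal F} \to 0$ with ${\mathcal E}_0 = \bigoplus_j \Omega^{p_j+1}_{{\mathbb P}^n}(k_j)$ and ${\mathcal L}_0$ a sum of line bundles. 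Applying $\Gamma_*$, using $\Gamma_*(\Omega^{p}_{{\mathbb P}^n}) = E_{p+1}$ for $p \ge 1$ (these syzygy modules have depth $\ge 2$) and ${\rm H}^1_*(\Omega^{p_j+1}_{{\mathbb P}^n}) = {\rm H}^2_{{\mathfrak m}}(E_{p_j+2}) = 0$ since $\depth E_{p_j+2} \ge 3$ (here $p_j \ge 1$ and $n \ge 2$; the case $n=1$ is immediate from Grothendieck's theorem), I obtain $0 \to M_0 \to P_0 \to N \to 0$ where $P_0 \to N$ is the direct sum of the minimal Koszul covers — hence the minimal free cover of $N$ — and $M_0 = \Gamma_*({\mathcal E}_0)$.

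Finally, since $P \to N$ and $P_0 \to N$ are both minimal free covers of $N$, uniqueness of the projective cover gives an isomorphism $P \stackrel{\sim}{\to} P_0$ over $N$, which restricts to an isomorphism $M \stackrel{\sim}{\to} M_0$ of graded $S$-modules. Applying $\widetilde{(-)}$ and using $\widetilde{\Gamma_*({\mathcal E})} = {\mathcal E}$, I conclude ${\mathcal E} = \widetilde{M} \cong \widetilde{M_0} = \bigoplus_j \Omega^{p_j+1}_{{\mathbb P}^n}(k_j)$. The main obstacle is the first step: converting the abstract hypothesis that ${\mathcal L}$ has no direct summand in common with ${\mathcal E}$ into the concrete statement that $P \to N$ is a minimal cover; once this is in place, the rest is bookkeeping with the syzygy modules $E_p$, depth and local-cohomology bounds, and the $\Gamma_*$--$\widetilde{(-)}$ dictionary.
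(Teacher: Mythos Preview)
Your argument is correct and takes a genuinely different route from the paper. The paper's proof is iterative: after a normalizing twist it isolates the summands $\Omega^q_{\mathbb{P}^n}(q+1)$ of ${\mathcal F}$ (those with a nonzero global section in the lowest possible degree), uses ${\rm H}^1_*({\mathcal E})=0$ to lift these sections to ${\mathcal L}$, and then splits off a Koszul short exact sequence $0\to\Omega^{q+1}_{\mathbb{P}^n}(q+1)^{\oplus}\to{\mathcal O}_{\mathbb{P}^n}^{N}\to\Omega^q_{\mathbb{P}^n}(q+1)^{\oplus}\to 0$ as a direct summand of the given sequence, repeating until ${\mathcal F}$ is exhausted. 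You instead pass in one step to graded $S$-modules, recognize both the given sequence and the direct sum of Koszul presentations as \emph{minimal} free covers of $N=\Gamma_*({\mathcal F})$, and invoke uniqueness of the projective cover; the key observation --- that the hypothesis ``no summand of ${\mathcal L}$ is a summand of ${\mathcal E}$'' is exactly the minimality of $P\to N$ --- is the crux, and your verification of it is clean. Your approach is more conceptual and avoids the degree-by-degree bookkeeping; the paper's approach is more explicit about how the Koszul pieces actually sit inside the sequence, which matches the hands-on lifting techniques (e.g.\ Lemma~\ref{BuchsbaumLemma}) used elsewhere in Section~2.
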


\begin{proof} 
We may assume 
${\mathcal F} =  {\mathcal F}' \oplus (\oplus_{q \ge 1} \Omega_{{\mathbb P}^n}^q(q+1)^{\oplus})$,
where ${\mathcal F}' =  \oplus_{p_j \le k_j} \Omega^{p_j}(k_j)$, 
and ${\mathcal L}$ has no direct summand of positive degree.
Since ${\rm H}^1_*({\mathcal E}) = 0$ and a global section $\Omega_{{\mathbb P}^n}^q(q+1)$
is lifted up to a section of ${\mathcal L}$, 
there is a direct summand ${\mathcal O}_{{\mathbb P}^n}^N$ of ${\mathcal L}$.
Then an exact sequence $0 \to \oplus_{q \ge 1} \Omega_{{\mathbb P}^n}^{q+1}(q+1)^{\oplus}
\to {\mathcal O}_{{\mathbb P}^n}^N \to \oplus_{q \ge 1} \Omega_{{\mathbb P}^n}^q(q+1)^{\oplus} \to 0$
gives a direct summand of the sequence
$ 0 \to {\mathcal E} \to {\mathcal L} \to {\mathcal F} \to 0$,
which gives the assertion by repeating this process.
\end{proof}

The third proof pointed out by Yoshino \cite{Y} 
depends on the Horrocks correspondence. 
Indeed,
by  (\ref{BuchsbaumSV}) (iii),
$\tau_{>0}\tau_{<n} {\mathbb R}\Gamma_*({\mathcal E})
(\cong \tau_{<n+1}{\mathbb R}\Gamma_{\mathfrak m}(M))$
is isomorphic to a compex of $k$-vector spaces,
where $M = \Gamma_*({\mathcal E})$
is a graded $S$-module.
Since
$\tau_{>0}\tau_{<n} {\mathbb R}\Gamma_*( \Omega^p_{{\mathbb P}^n})$
is isomorphic to a complex of one $k$-vector space, we see that 
a Buchsbaum bundle
${\mathcal E}$ is isomorphic to a direct sum of the sheaves of differential form
$\Omega^p_{{\mathbb P}^n}(\ell)$ under stable equivalence
from the categorical equivalence
(\ref{HorrocksCorrespondence}).
\hfill \qed \medskip

Finally,
as the fourth proof, we explain original one coming from an idea of 
\cite{BM,MM}, which becomes the starting point of a
syzygy theoretic method.
First we will give a summary of the Buchsbaum criterion in terms
of spectral sequence \cite{M1989,MTrieste, M2019} in order
to apply to the syzgy theoretic proof of the structure theorem.
For a graded $S$-module $M = \Gamma_*({\mathcal E})$,
we consider
a Koszul complex $K_{\bullet} = K_{\bullet}((x_0,\cdots,x_n);S)$
and a \u{C}ech complex  $L^{\bullet} = (0 \to M \to
C^{\bullet}({\cal U}; {\mathcal E})[-1])$, where ${\mathcal U}$ is
an affine covering of ${\mathbb P}^n$.
Then we take
a double complex
${\rm Hom}_S(K_{\bullet},L^{\bullet})$, which yields
a spectral sequence $\{ {\rm E}^{i,j}_r \}$ such that
\[ {\rm E}^{i,j}_1 = 
{\rm H}_i((x_0,\cdots,x_n); {\rm H}_{\mathfrak m}^j(M))
 \Rightarrow {\rm H}^{i+j} = {\rm H}^{i+j}((x_0,\cdots,x_n);M). \]
The natural map
${\displaystyle {\rm H}^j = {\rm H}^j((x_0,\cdots,x_n);M) \to
{\rm E}^{0,j}_1 = {\rm H}_{\mathfrak m}^j(M)}$
is surjective for $0 \le j \le n$
from the theory of Buchsbaum ring (cf.\cite{MTrieste,SV}),
and
$d_r^{i,j} : {\rm E}_r^{i,j} \to {\rm E}_r^{i+r,j-r+1}$ is zero for
$j \le n$, $r \ge 1$ (\cite[(1.11)]{M1989}).

Keeping the construction above in mind,
we will give an analog of the
spectral sequence. From the Koszul complex $K_{\bullet}
= K_{\bullet}((x_0,\cdots,x_n);S)$,
we have exact sequences:
\begin{itemize}
\item[] $\bar{N}^{\bullet} \, : \,$
$0 \to \Omega_{{\mathbb P}^n}^{p}  \to
 {\mathcal O}_{{\mathbb P}^n}^{\oplus a_p}(-p)
\to \cdots \to  {\mathcal O}_{{\mathbb P}^n}^{\oplus a_1}(-1)  \to
 {\mathcal O}_{{\mathbb P}^n} \to 0$
\vspace{-0.2cm}
\item[] $\bar{\bar{N}}^{\bullet} \, : \,$
$ 0 \to  {\mathcal O}_{{\mathbb P}^n}(-n-1) \to
 {\mathcal O}_{{\mathbb P}^n}^{\oplus a_n}(-n)
\to \cdots \to  {\mathcal O}_{{\mathbb P}^n}^{\oplus a_{p+1}}(-p-1)  \to
 \Omega_{{\mathbb P}^n}^{p} \to 0$,
\end{itemize}

\noindent
where 
$\bar{N}^{-i} = {\mathcal O}_{{\mathbb P}^n}^{\oplus a_i}(-i)$ for $0 \le i \le p$,
$\bar{N}^{-p-1} =  \Omega_{{\mathbb P}^n}^p$
and $\bar{\bar{N}}^{-i} = {\mathcal O}_{{\mathbb P}^n}^{\oplus a_i}(-i)$
for $p+1 \le i \le n+1$,
$\bar{\bar{N}}^{-p} = \Omega_{{\mathbb P}^n}^p$,
${\displaystyle a_r = \left(
\begin{array}{c}
n+1 \\ r
\end{array}
\right) }$.
Then the exact sequences
\begin{itemize}
\item[]
$0 \to {\mathcal E} \to
 {\mathcal E}^{\oplus a_1}(1)
\to \cdots \to  {\mathcal E}^{\oplus a_p}(p) \to
{\mathcal E} \otimes \Omega_{{\mathbb P}^n}^{p \vee} \to 0$
\item[]
$0 \to {\mathcal E}^{\vee}(-n-1) \to
 {{\mathcal E}^{\vee}}^{\oplus a_n}(-n)
\to \cdots \to  {{\mathcal E}^{\vee}}^{\oplus a_{p+1}}(-p-1) \to
{\mathcal E}^{\vee} \otimes \Omega_{{\mathbb P}^n}^p \to 0$
\end{itemize}
give maps
$\, \varphi: {\rm H}^0({\mathcal E} \otimes
\Omega^{p \vee}_{{\mathbb P}^n}) \to {\rm H}^p({\mathcal E}) \quad {\rm and} \quad
\psi : {\rm H}^0({\mathcal E}^{\vee} \otimes
\Omega^p_{{\mathbb P}^n}) \to {\rm H}^{n-p}({\mathcal E}^{\vee}(-n-1))$.
\smallskip

\begin{lem}[cf.~\cite{MM}]
\label{BuchsbaumLemma}
Under the condition above, assume that there is a nonzero element
$s \in {\rm H}^p({\mathcal E})$ and a corresponding element $t
\in {\rm H}^{n-p}({\mathcal E}^{\vee}(-n-1))$ by Serre duality
satisfying that $s$ and $t$ can be lifted up to ${\rm H}^0({\mathcal E} \otimes
\Omega^{p \vee}_{{\mathbb P}^n})$ and ${\rm H}^0({\mathcal E}^{\vee} \otimes
\Omega^p_{{\mathbb P}^n})$ by $\varphi$ ans $\psi$ respectively.
Then $\Omega^p_{{\mathbb P}^n}$ is a direct summand of ${\mathcal E}$.
\end{lem}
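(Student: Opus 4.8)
The plan is to produce the splitting $\Omega^p_{{\mathbb P}^n}$-summand by exhibiting a morphism $\Omega^p_{{\mathbb P}^n}\to{\mathcal E}$ and a morphism ${\mathcal E}\to\Omega^p_{{\mathbb P}^n}$ whose composite is an automorphism of $\Omega^p_{{\mathbb P}^n}$, since $\Omega^p_{{\mathbb P}^n}$ is known to be indecomposable. First I would construct the two maps from the hypotheses. The lifting of $s$ along $\varphi$ means there is $\tilde s\in{\rm H}^0({\mathcal E}\otimes\Omega^{p\vee}_{{\mathbb P}^n})={\rm Hom}(\Omega^p_{{\mathbb P}^n},{\mathcal E})$ mapping to $s$; call this morphism $\alpha:\Omega^p_{{\mathbb P}^n}\to{\mathcal E}$. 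Dually, the lifting of $t$ along $\psi$ gives $\tilde t\in{\rm H}^0({\mathcal E}^{\vee}\otimes\Omega^p_{{\mathbb P}^n})={\rm Hom}({\mathcal E},\Omega^p_{{\mathbb P}^n})$ (using ${\rm H}^0({\mathcal E}^{\vee}\otimes\Omega^p_{{\mathbb P}^n})\cong{\rm Hom}({\mathcal E}^{\vee\vee},\Omega^p_{{\mathbb P}^n})$ and reflexivity of the bundle); call this $\beta:{\mathcal E}\to\Omega^p_{{\mathbb P}^n}$. It then suffices to show $\beta\circ\alpha:\Omega^p_{{\mathbb P}^n}\to\Omega^p_{{\mathbb P}^n}$ is nonzero, because ${\rm Hom}(\Omega^p_{{\mathbb P}^n},\Omega^p_{{\mathbb P}^n})\cong k$ (the bundle is simple), so a nonzero endomorphism is a scalar automorphism, and then $\alpha$ is a split injection with retraction a scalar multiple of $\beta$.

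The heart of the argument is therefore to verify $\beta\circ\alpha\neq 0$, and this is where the Serre-duality pairing between $s$ and $t$ enters. I would trace through the construction of $\varphi$ and $\psi$ from the exact sequences (iii) and (iv): $\varphi$ arises as the connecting/edge map of the hypercohomology of the complex obtained by tensoring $\bar N^\bullet$ with ${\mathcal E}$, and $\psi$ is the analogous edge map for $\bar{\bar N}^\bullet$ tensored with ${\mathcal E}^\vee$. The key point is that the Koszul resolution $K_\bullet$ of $k$ is \emph{self-dual} up to shift (it is the Koszul complex on a regular sequence), so $\bar N^\bullet$ and $\bar{\bar N}^\bullet$ are Serre-dual complexes. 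Consequently the diagram relating $\varphi$, $\psi$ and the Serre duality isomorphisms ${\rm H}^p({\mathcal E})\cong{\rm H}^{n-p}({\mathcal E}^\vee(-n-1))^\vee$ and ${\rm H}^0({\mathcal E}\otimes\Omega^{p\vee}_{{\mathbb P}^n})\cong {\rm H}^n({\mathcal E}^\vee\otimes\Omega^p_{{\mathbb P}^n}(-n-1))^\vee$ commutes. The upshot is that the pairing of $\alpha$ with $\beta$ — which computes $\beta\circ\alpha$ as an element of ${\rm Hom}(\Omega^p_{{\mathbb P}^n},\Omega^p_{{\mathbb P}^n})\cong k$, equivalently the trace of $\beta\circ\alpha$ — equals, up to the identification, the Serre-duality pairing $\langle s,t\rangle$, which is nonzero by hypothesis (``$t$ corresponds to $s$ by Serre duality''). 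Hence $\beta\circ\alpha\neq0$.

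I would set this up carefully by working with the hypercohomology spectral sequences of the two complexes and the compatibility of Serre duality with the self-duality $K_\bullet\simeq K_\bullet^\vee[-n-1]$; an alternative, perhaps cleaner, route is to observe that $\alpha$ and $\beta$ fit into a commutative square with the tautological pairing $\Omega^p_{{\mathbb P}^n}\otimes\Omega^{p\vee}_{{\mathbb P}^n}\to{\mathcal O}_{{\mathbb P}^n}$ and then compute the relevant composite of edge maps degree by degree along the Koszul filtration. The main obstacle I anticipate is precisely this bookkeeping: making the identification ``$\beta\circ\alpha$ (as a scalar) $=$ $\langle s,t\rangle$'' rigorous requires a careful sign- and shift-tracking through the double complex ${\rm Hom}_S(K_\bullet,L^\bullet)$ and its dual, and in particular checking that the lifts $\tilde s,\tilde t$ pair to the same scalar as $s,t$ do — i.e.\ that passing to the lift does not destroy the pairing. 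Once that compatibility is in hand, the conclusion is immediate from simplicity of $\Omega^p_{{\mathbb P}^n}$, exactly as in the splitting criterion Lemma~\ref{ACM} but with ${\mathcal O}_{{\mathbb P}^n}(-m)$ replaced by $\Omega^p_{{\mathbb P}^n}$ and the duality input upgraded from $\Omega^\bullet$-free resolutions of $k$.
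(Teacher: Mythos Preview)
Your proposal is correct and follows essentially the same route as the paper: lift $s$ and $t$ to morphisms $f:\Omega^p_{{\mathbb P}^n}\to{\mathcal E}$ and $g:{\mathcal E}\to\Omega^p_{{\mathbb P}^n}$, then show $g\circ f$ is nonzero (hence an isomorphism, since ${\rm End}(\Omega^p_{{\mathbb P}^n})\cong k$) by identifying it with the Serre-duality pairing $\langle s,t\rangle$. The paper packages the compatibility you worry about into a single commutative square
\[
\begin{array}{ccc}
{\rm H}^0({\mathcal E}\otimes\Omega^{p\vee}_{{\mathbb P}^n})\otimes{\rm H}^0({\mathcal E}^{\vee}\otimes\Omega^p_{{\mathbb P}^n}) & \to & {\rm H}^0({\mathcal O}_{{\mathbb P}^n})\\
\downarrow & & \downarrow\\
{\rm H}^p({\mathcal E})\otimes{\rm H}^{n-p}({\mathcal E}^{\vee}(-n-1)) & \to & {\rm H}^n({\mathcal O}_{{\mathbb P}^n}(-n-1))
\end{array}
\]
and simply asserts its commutativity, so your anticipated bookkeeping is real but the paper does not spell it out either.
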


\begin{proof}
For $s(\ne 0) \in {\rm H}^p({\mathcal E})$ there exists
$f \in {\rm H}^0({\mathcal E} \otimes \Omega^{p \vee}_{{\mathbb P}^n})$
such that $\varphi(f) = s (\ne 0) \in {\rm H}^p({\mathcal E})$
Let us take $s \in {\rm H}^{m}({\mathcal E})$
and  $g \in
{\rm H}^0({\mathcal E}^{\vee} \otimes
\Omega^p_{{\mathbb P}^n})$
corresponding to
 $t \in {\rm H}^{n-p}({\mathcal E}^{\vee}
(-n-1))$ and
$\psi(g) = t (\ne 0) \in
{\rm H}^{n-p}({\mathcal E}^{\vee}(-n-1))$
respectively. Then we regard
$f$ and $g$ as elements of ${\rm Hom}(\Omega^p_{{\mathbb P}^n}, {\mathcal E})$
and ${\rm Hom}({\mathcal E},\Omega^p_{{\mathbb P}^n})$ respectively.
From a commutative diagram
\[
\begin{array}{ccccc}
{\rm H}^0({\mathcal E} \otimes \Omega^{p \vee}_{{\mathbb P}^n})
\otimes {\rm H}^0({\mathcal E}^{\vee} \otimes \Omega^p_{{\mathbb P}^n})
& \to  &
{\rm H}^0(\Omega^{p \vee}_{{\mathbb P}^n} \otimes \Omega^p_{{\mathbb P}^n}) & \cong &
 {\rm H}^{0}({\cal O}_{{\mathbb P}^n})
\\
\downarrow & & & & \downarrow   \\
{\rm H}^p({\mathcal E}) \otimes {\rm H}^{n-p}({\mathcal E}^{\vee}(-n-1))
  & 
\to  & & &
 {\rm H}^n({\cal O}_{{\mathbb P}^n}(-n-1) ), 
\\
\end{array}
\]
\noindent
the natural map
${\displaystyle {\rm H}^0({\mathcal E} \otimes \Omega^{p \vee}_{{\mathbb P}^n})
\otimes {\rm H}^0({\mathcal E}^{\vee} \otimes \Omega^p_{{\mathbb P}^n})
 \to
 {\rm H}^{0}({\cal O}_{{\mathbb P}^n})}$ gives an isomorphism
$g \circ f$. Thus we obtain that
 $\Omega^p_{{\mathbb P}^n}$ is a direct summand of ${\mathcal E}$.
\end{proof}

Let us come back to the proof of (\ref{Chang-Goto})
and consider a double complex ${\rm Hom}_S(\bar{N}^{\bullet},L^{\bullet})$.
From the spectral sequence theory of Buchsbaum modules,
we have a spectral sequence $\{ {\rm F}^{i,j}_r \}$ which satisfies that
${\rm F}^{i,j}_1 = {\rm H}^{j-1}_*({\mathcal E}(i))^{\oplus a_i}$ for $0 \le i \le p$
and $j \ge 2$
and ${\rm F}^{p+1,j}_1 = {\rm H}^{j-1}_*({\mathcal E} \otimes
\Omega_{{\mathbb P}^n}^{p \vee})$. Note that the maps $d^{i,j}_r : {\rm F}^{i,j}_r
\to {\rm F}^{i+r.j-r+1}_r$ is zero for $i, j, r$ with $1 \le r \le j \le n-1$ and 
$0 \le i \le p-r$ from the comparison between $\{ {\rm E}^{i,j}_r \}$
and $\{ {\rm F}^{i,j}_r \}$. Thus we have a surjective map
$\varphi: {\rm H}^0({\mathcal E} \otimes
\Omega^{p \vee}_{{\mathbb P}^n}) \to {\rm H}^p({\mathcal E})$.
Similarly, a double complex constructed from
the \u{C}ech resolution of ${\mathcal E}^{\vee} \otimes
\bar{\bar{N}}^{\bullet}$ also gives
a surjective map $\psi : {\rm H}^0({\mathcal E}^{\vee} \otimes
\Omega^p_{{\mathbb P}^n}) \to {\rm H}^{n-p}({\mathcal E}^{\vee}(-n-1))$
from the spectral sequence criterion of Buchsbaum modules. Hence we have
the assertion by (\ref{BuchsbaumLemma}). 
\hfill \qed

\section{Towards Structure Theorem of quasi-Buchsbaum bundles on projective space}

In this section we study quasi-Buchsbaum
vector bundles and gives a characterization of null-correlation bundles on 
${\mathbb P}^n$.
Our result (\ref{mainth1}) gives a classification of indecomposable vector
bundles 
${\mathcal E}$
on ${\mathbb P}^n$ with $\dim_k {\rm H}^1_*({\mathcal E}) 
= \dim_k {\rm H}^{n-1}_*({\mathcal E}) = 1$, which characterizes
a null-correlation bundle.
Throughout this section we assume ${\rm char}\, k \ne 2$
to simplify a standardization of skew-symmetric matirices.
\medskip



Let us define a null-correlation bundle, see, e.g., \cite{OSS}.
Let ${\mathbb P}^n = \Proj S$, $S = k[x_0,\cdots,x_n]$,
${\mathfrak m} = (x_0,\cdots,x_n)$ with $n$ odd. 
Let us write an element of $\Gamma(\Omega_{{\mathbb P}^n}(2))$ explicitly.
Since $\Gamma(\Omega_{{\mathbb P}^n}(2))$ is the kernel
of $\Gamma({{\mathcal O}_{{\mathbb P}^n}}(1))^{\oplus n+1} \to
\Gamma({{\mathcal O}_{{\mathbb P}^n}}(2))$ in the Euler sequence,
we have an element
$(a_{00}x_0 + \cdots + a_{0n}x_n, \cdots, a_{n0}x_0 + \cdots + a_{nn}x_n)$
of $\Gamma({{\mathcal O}_{{\mathbb P}^n}}(1))^{\oplus n+1}$
satisfying that $\sum_{ij}a_{ij}x_ix_j = 0$, where $a_{ij} \in k$, $i, j = 0, \cdots, n$.
Then we have a skew-symmetric $(n+1) \times (n+1)$-matrix $A = (a_{ij})$, which
gives a map ${\mathcal O}_{{\mathbb P}^n} \to \Omega_{{\mathbb P}^n}(2)$.
Now assume that ${\rm rank}\, A = n+1$. Then the cokernel of this map
defines a vector bundle of rank $n-1$.
By standardizing the skew-symmetric matrix, we may take
${\displaystyle A =
\left(
\begin{array}{cc}
O & B \\
-B & O \\
\end{array}
\right), }$
where $B$ is a diagonal matrix ${\rm diag}\, (\lambda_1,\cdots,\lambda_{(n+1)/2})$ with $\lambda_i (\ne 0) \in k$.
By taking the dual and twisting of
 ${\mathcal O}_{{\mathbb P}^n} \to \Omega_{{\mathbb P}^n}(2)$,
we have a surjective morphism
$\varphi: {\mathcal T}_{{\mathbb P}^n}(-1) \to {\mathcal O}_{{\mathbb P}^n}(1)$.
Then a null-correlation bundle ${\mathcal N}$ is defined as $\Ker \varphi$, 
which gives a short exact sequence
\[ 0 \to {\mathcal N} \to
 {\mathcal T}_{{\mathbb P}^n}(-1)(\cong \Omega^{n-1}_{{\mathbb P}^n}(n)) \to
 {\mathcal O}_{{\mathbb P}^n}(1) \to 0. \]
Thus we see that a skew-symmetric $(n+1) \times (n+1)$-matrix $A$ of
rank $n+1$ defines a null-correlation bundle ${\mathcal N}$, which does not
depend on a choice of $A$ via a coordinate change. 
For example, $(x_1,-x_0,x_3,-x_2,\cdots,x_n,-x_{n-1}) 
\in \Gamma(\Omega_{{\mathbb P}^n}(2))$ gives a null-correlation bundle through a
morphism
${\mathcal O}_{{\mathbb P}^n} \to \Omega_{{\mathbb P}^n}(2)$.
\medskip

The intermediate cohomologies appear only in ${\rm H}^1({\mathcal N}(-1)) (\cong k)$
and ${\rm H}^{n-1}({\mathcal N}(-n)) (\cong k)$.
Also, note that ${\rm H}^0({\mathcal N}(\ell)) = 0$ for $\ell \le 0$
and ${\rm H}^n({\mathcal N}(\ell)) = 0$ for $\ell \ge -n-1$.
The dual ${\mathcal N}^{\vee}$ has the same cohomology table as ${\mathcal N}$,
and ${\mathcal N}^{\vee} \cong \wedge^{n-2} {\mathcal N}$, especially,
${\mathcal N}$ is self-dual for $n=3$. 
Thus we see that ${\rm reg}\, {\mathcal N} = 1$ and ${\rm reg}\, {\mathcal N}^{\vee} = 1$.

\begin{df}
A vector bundle ${\mathcal E}$ on
${\mathbb P}^n$ is called a quasi-Buchsbaum bundle if 
${\displaystyle {\mathfrak m}{\rm H}^i_* ({\mathcal E})
 = 0, \, 1 \le i \le n-1}$.
\end{df}

\begin{rem}
A Buchsbaum vector bundle is a quasi-Buchsbaum vector bundle.
A null-correlation bundle is quasi-Buchsbaum but not Buchsbaum.
\end{rem}

\begin{ques}
Is there a structure theorem of quasi-Buchsbaum bundle on the projective space?
Can we classify quasi-Buchsbaum bundles ${\mathcal E}$ on ${\mathbb P}^n$ with
${\rm H}^i_*({\mathcal E}) = 0$, $2 \le i \le n-2$?

\end{ques}
There is an answer to this question for stable vector bundles of rank 2 on
${\mathbb P}^3_{\mathbb C}$ by using Barth's Restriction Theorem,
see \cite{ES,KR}.

\begin{prop}[Ellia-Sarti(1999), Kumar-Rao(2000)]
\label{ES-KR}
Let ${\mathcal E}$ be a stable (resp.~indecomposable)
vector bundle of rank 2 on ${\mathbb P}^3_{\mathbb C}$.
Then ${\mathcal E}$ is quasi-Buchsbaum if and only if ${\mathcal E}$ is
a null-correlation bundle with some twist.
\end{prop}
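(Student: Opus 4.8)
I would prove the two implications separately. The ``if'' part is immediate from the cohomology of a null-correlation bundle $\mathcal{N}$ on $\mathbb{P}^3$ recorded above: the only nonzero intermediate cohomology groups are ${\rm H}^1(\mathcal{N}(-1))\cong k$ and ${\rm H}^2(\mathcal{N}(-3))\cong k$, so ${\rm H}^1_*(\mathcal{N})$ and ${\rm H}^2_*(\mathcal{N})$ each live in a single degree and are therefore annihilated by ${\mathfrak m}$; hence $\mathcal{N}$ is quasi-Buchsbaum.

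For ``only if'', let $\mathcal{E}$ be stable of rank $2$ and quasi-Buchsbaum on $\mathbb{P}^3$. Since $\mathcal{E}^\vee\cong\mathcal{E}(-c_1)$ and both hypotheses are invariant under twisting, after a twist I may assume $c_1(\mathcal{E})\in\{0,-1\}$; the odd case $c_1=-1$ is entirely analogous (and, after the same computation, yields no example), so take $c_1=0$, whence $\mathcal{E}^\vee\cong\mathcal{E}$, ${\rm H}^0(\mathcal{E}(\ell))=0$ for $\ell\le0$, ${\rm H}^3(\mathcal{E}(\ell))=0$ for $\ell\ge-4$, and $h^2(\mathcal{E}(\ell))=h^1(\mathcal{E}(-\ell-4))$ by Serre duality. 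Write $a_\ell=h^1(\mathcal{E}(\ell))$, $m=c_2(\mathcal{E})$, $M={\rm H}^1_*(\mathcal{E})$. If $M=0$ then $\mathcal{E}$ is ACM, hence split by Horrocks (Theorem~\ref{Horrocks}) and not stable; so $M\ne0$ and $m\ge1$. As ${\mathfrak m}M=0$, $M$ is a direct sum of degree-shifted copies of $k$, so reaching the conclusion amounts to showing $\dim_kM=1$ (and hence, by Serre duality, $\dim_k{\rm H}^2_*(\mathcal{E})=1$), the cohomology table then matching that of a null-correlation bundle.

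The engine is Barth's Restriction Theorem: for a general plane $H\cong\mathbb{P}^2$ the bundle $\mathcal{E}_H:=\mathcal{E}|_H$ is semistable of rank $2$ with $c_1=0$, $c_2=m$; in particular $h^0(\mathcal{E}_H(\ell))=0$ for $\ell\le-1$, $h^0(\mathcal{E}_H)\le1$, and $h^2(\mathcal{E}_H(\ell))=h^0(\mathcal{E}_H(-\ell-3))^\vee=0$ for $\ell\ge-2$. Because ${\mathfrak m}\,{\rm H}^1_*(\mathcal{E})={\mathfrak m}\,{\rm H}^2_*(\mathcal{E})=0$, multiplication by the linear form cutting out $H$ annihilates ${\rm H}^1$ and ${\rm H}^2$ of every twist of $\mathcal{E}$, so the cohomology sequence of $0\to\mathcal{E}(\ell-1)\to\mathcal{E}(\ell)\to\mathcal{E}_H(\ell)\to0$ splits into short exact sequences
\[ 0\to{\rm H}^1(\mathcal{E}(\ell))\to{\rm H}^1(\mathcal{E}_H(\ell))\to{\rm H}^2(\mathcal{E}(\ell-1))\to0, \]
that is, $h^1(\mathcal{E}_H(\ell))=a_\ell+a_{-\ell-3}$ (using $h^2(\mathcal{E}(\ell-1))=a_{-\ell-3}$ by Serre duality on $\mathbb{P}^3$). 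Evaluating at $\ell=-1$, where Riemann--Roch on $\mathbb{P}^2$ forces $h^1(\mathcal{E}_H(-1))=m$, gives $a_{-1}+a_{-2}=m$; Riemann--Roch on $\mathbb{P}^3$ at $\ell=-1$ gives $a_{-1}-a_{-3}=m$. Subtracting, $a_{-2}=a_{-3}=0$ and $a_{-1}=m$. Evaluating the relation at $\ell=0$ gives $h^1(\mathcal{E}_H(0))=a_0$, while semistability and Riemann--Roch on $\mathbb{P}^2$ give $h^1(\mathcal{E}_H(0))=h^0(\mathcal{E}_H)-\chi(\mathcal{E}_H)\le 1+(m-2)=m-1$; and Riemann--Roch on $\mathbb{P}^3$ at $\ell=0$ gives $a_0-a_{-4}=2m-2$, so $a_{-4}=a_0-(2m-2)\le 1-m$. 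Since $a_{-4}\ge0$ this forces $m=1$.

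With $m=1$ all inequalities are equalities, so $a_{-1}=1$ and $a_\ell=0$ otherwise: indeed $a_{-2}=a_{-3}=a_0=a_{-4}=0$, and $h^1(\mathcal{E}_H(\ell))=0$ for $\ell\ge0$ (regularity of the semistable $\mathcal{E}_H$ with $c_2=1$) turns the relation into $a_\ell+a_{-\ell-3}=0$ for $\ell\ge0$, killing every remaining $a_\ell$. Thus $M\cong k$ concentrated in degree $-1$ and ${\rm H}^2_*(\mathcal{E})$ is one-dimensional in degree $-3$. By the Horrocks correspondence (Theorem~\ref{HorrocksCorrespondence}), $\tau_{>0}\tau_{<3}{\mathbb R}\Gamma_*(\mathcal{E})$ is then, up to stable equivalence, exactly the complex attached to a null-correlation bundle, so $\mathcal{E}$ is stably equivalent to some $\mathcal{N}$; as $\mathcal{E}$ and $\mathcal{N}$ are both indecomposable of rank $2$ with no line-bundle summand, Krull--Schmidt gives $\mathcal{E}\cong\mathcal{N}$. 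The step I expect to be the crux is the numerical collision in the previous paragraph, and that is precisely where Barth's theorem is indispensable: it is what guarantees $\mathcal{E}_H$ is semistable, hence $h^0(\mathcal{E}_H)\le1$ and $h^2(\mathcal{E}_H)=0$, which is what makes $a_{-4}\le 1-m$ and rules out every $c_2\ge2$.
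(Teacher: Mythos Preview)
The paper does not give its own proof of this proposition; it is stated as a result of Ellia--Sarti, with only the remark that Barth's Restriction Theorem plays an important role. So there is no argument in the paper to compare your approach against directly, and your numerical argument via Barth and Riemann--Roch is very much in the spirit of what the paper hints at. The computation forcing $c_2=1$ and then $a_\ell=0$ for $\ell\ne-1$ is correct and nicely done.

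There is, however, a genuine gap in your final step. Once you know ${\rm H}^1_*({\mathcal E})\cong k(1)$ and ${\rm H}^2_*({\mathcal E})\cong k(3)$, you invoke the Horrocks correspondence to conclude that $\tau_{>0}\tau_{<3}{\mathbb R}\Gamma_*({\mathcal E})$ agrees with that of a null-correlation bundle. This is not justified: having the same cohomology \emph{modules} does not force the same object in the derived category. Concretely, such a complex is determined up to quasi-isomorphism by a class in ${\rm Ext}^2_S\bigl(k(3),k(1)\bigr)\cong k^6$, and different classes give non--quasi-isomorphic complexes. This is exactly the phenomenon the paper analyses in Section~4: bundles with $\dim_k{\rm H}^1_*=\dim_k{\rm H}^2_*=1$ split into three distinct stable-equivalence classes (Buchsbaum, pseudo-Buchsbaum, nonstandard-Buchsbaum) according to the rank of the associated skew-symmetric matrix, and only the last of these is the null-correlation bundle (Corollary~\ref{maincor}).

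The fix is easy with the tools of the paper: having pinned down the intermediate cohomology, simply invoke Corollary~\ref{mainprop1} (or Theorem~\ref{mainth1}) with $n=3$ and ${\rm rank}\,{\mathcal E}=2\le n-1$ to conclude ${\mathcal E}\cong{\mathcal N}$ up to twist; equivalently, observe that in the classification of Corollary~\ref{maincor} the Buchsbaum and pseudo-Buchsbaum cases have rank $6$ and $4$ respectively, so rank $2$ forces the null-correlation case. Also, your dismissal of the $c_1=-1$ case deserves a line of justification; the same restriction/Riemann--Roch count indeed forces $c_2\le0$, which is incompatible with stability via Barth on a general plane.
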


\begin{rem}
\label{skew-symmetric}
Let us take a map $\varphi : {\mathcal O}_{{\mathbb P}^3} \to \Omega_{{\mathbb P}^3}(2)$
corresponding to a skew-symmetric $4 \times 4$-matrix $A = (a_{ij})$.
Then we have
%
${\displaystyle  P^{-1}AP = \left(
\begin{array}{cccc}
0  & \lambda & 0 & 0 \\
-\lambda & 0 & 0 & 0 \\
0 & 0 & 0 & \mu \\
0 & 0 & -\mu & 0
\end{array}
\right)    }$
for an invertible matrix $P$.
So, there are 3 types as the cokernel of $\varphi$
\vspace{0.2cm}

\noindent
(i)
${\displaystyle  A = \left(
\begin{array}{cccc}
0  & 1 & 0 & 0 \\
-1 & 0 & 0 & 0 \\
0 & 0 & 0 & 1 \\
0 & 0 & -1 & 0
\end{array}
\right)  \quad  }$
(ii)
${\displaystyle  A = \left(
\begin{array}{cccc}
0  & 1 & 0 & 0 \\
-1 & 0 & 0 & 0 \\
0 & 0 & 0 & 0 \\
0 & 0 & 0 & 0
\end{array}
\right)  \quad   }$
(iii) $A = 0$
\vspace{0.2cm}

\noindent
A null-correlation bundle ${\mathcal N}$ is defined as
${\mathcal N}^{\vee}(1) \cong {\rm Coker}\, \varphi$ in case (i).
In general, a skew-symmetric matrix $(n+1) \times (n+1)$-
matrix $A = (a_{ij})$ corresponding to
 $\varphi : {\mathcal O}_{{\mathbb P}^n} \to \Omega_{{\mathbb P}^n}(2)$.
has an even number rank.
In case ${\rm rank}\, A = 2m$, there is an invertible matrix $P$ such that
${\displaystyle P^{-1}AP =
\left(
\begin{array}{ccc}
0  & B & 0 \\
-B & 0 & 0 \\
0 & 0 & 0 
\end{array}
\right)}$ with a $m$-diagonal matrix $B = {\rm diag}\, (\lambda_1,\cdots,\lambda_m)$, $\lambda_i \ne 0$.
\hfill \qed
\end{rem}

\begin{df}
Let ${\mathcal F}$ be a coherent sheaf on ${\mathbb P}^n$.
Then we define
\[ a_i({\mathcal F}) = \max \{ \ell \in {\mathbb Z} |
{\rm H}^i({\mathcal F}(\ell)) \ne 0 \} \]
for $i \ge 1$, possibly $a_i({\mathcal F}) = - \infty$.
Note that ${\rm reg}\, {\mathcal F} = \max \{ a_i({\cal F}) + i + 1 | i \ge 1 \}$.
\end{df}

\begin{lem}
\label{observation}
Let $p$ and $q$ be integers with $1 \le p < q \le n-1$.
Let ${\mathcal E}$ be vector
bundle on ${\mathbb P}^n$
such that ${\rm H}^i_*({\mathcal E})=0$ for $1 \le i \le n-1$ with $i \ne p,q$. 
Then there is a quasi-Buchsbaum bundle ${\mathcal F}$ with
$a_p({\mathcal F}) = -p$, $a_q({\mathcal F}) = - q -1$ and
$a_n({\mathcal F}) \le -n-1$ such that
${\mathcal E}(\ell) \cong {\mathcal F} \oplus (\oplus_i {\mathcal O}_{{\mathbb P}^n}(\ell'_i))
\oplus \oplus_j(\Omega^p_{{\mathbb P}^n}(\ell''_j)) \oplus \oplus_k(\Omega^q_{{\mathbb P}^n}(\ell'''_k))$
for some $\ell$, $\ell'_i$, $\ell''_j$ and $\ell'''_k$.
\end{lem}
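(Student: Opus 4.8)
The plan is to transfer everything to the Horrocks correspondence (Theorem~\ref{HorrocksCorrespondence}), decompose there, and push the decomposition back through ${\rm Syz}$. Write $P^{\bullet}:=\tau_{>0}\tau_{<n}{\mathbb R}\Gamma_*({\mathcal E})$, so that ${\mathcal E}\cong{\rm Syz}(P^{\bullet})$ in $\underline{\mathbb{VB}}$. By hypothesis together with the quasi-Buchsbaum property, ${\rm H}^i(P^{\bullet})={\rm H}^i_*({\mathcal E})$ vanishes for $2\le i\le n-2$, while $V:={\rm H}^1(P^{\bullet})$ and $W:={\rm H}^{n-1}(P^{\bullet})$ are finite-length graded $S$-modules killed by ${\mathfrak m}$, hence finite direct sums of twists $k(c)$ of the residue field. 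The truncation triangle $\tau_{\le 1}P^{\bullet}\to P^{\bullet}\to\tau_{\ge 2}P^{\bullet}\to\tau_{\le 1}P^{\bullet}[1]$ presents $P^{\bullet}$ as an extension of $W[-(n-1)]$ by $V[-1]$, classified by a degree-$0$ element $\xi\in{\rm Ext}^{n-1}_S(W,V)$.

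The first step is to split $P^{\bullet}$ inside ${\bf FinL}$. Decomposing $V=\bigoplus_{\alpha}V_{\alpha}$, $W=\bigoplus_{\beta}W_{\beta}$ by twist, a direct computation gives ${\rm Ext}^{n-1}_S(k(\beta),k(\alpha))_0=0$ unless $\beta=\alpha+n-1$, so $\xi$ is block-diagonal along the matched pairs $(\alpha,\alpha+n-1)$; hence $P^{\bullet}$ splits off the pure blocks $V_{\alpha}[-1]$ and $W_{\beta}[-(n-1)]$ attached to unmatched twists, and each matched block is governed by $\xi_{\alpha}\in{\rm Ext}^{n-1}_S(W_{\alpha+n-1},V_{\alpha})_0$. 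A further elementary reduction of $\xi_{\alpha}$ (splitting off the part of $V_{\alpha}$ not met by $\xi_{\alpha}$ and the part of $W_{\alpha+n-1}$ killed by $\xi_{\alpha}$, again as pure blocks) leaves a "mixed" block $G^{\bullet}_{\alpha}$ whose only cohomology is $k(\alpha)^{v'_{\alpha}}$ in degree $1$ and $k(\alpha+n-1)^{w'_{\alpha}}$ in degree $n-1$, and which has no pure direct summand. Using ${\rm Syz}(k(c)[-p])\cong\Omega^p_{{\mathbb P}^n}(c)$ for $1\le p\le n-1$, setting ${\mathcal F}_0:=\bigoplus_{\alpha}{\rm Syz}(G^{\bullet}_{\alpha})$ and taking for ${\mathcal F}_0$ the representative with no line-bundle summand, Atiyah's Krull--Schmidt theorem turns the resulting stable equivalence into an honest isomorphism
\[ {\mathcal E}\;\cong\;{\mathcal F}_0\;\oplus\;(\oplus_i{\mathcal O}_{{\mathbb P}^n}(m'_i))\;\oplus\;(\oplus_j\Omega^1_{{\mathbb P}^n}(m''_j))\;\oplus\;(\oplus_k\Omega^{n-1}_{{\mathbb P}^n}(m'''_k)), \]
the line bundles absorbing the stable-equivalence ambiguity.

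Next, since ${\mathcal E}$ is not Buchsbaum, at least one mixed block survives, so ${\mathcal F}_0\ne0$; and since $G^{\bullet}_{\alpha}$ has no pure summand, ${\rm Syz}(G^{\bullet}_{\alpha})$ has $a_1=-\alpha$ and $a_{n-1}=-\alpha-(n-1)$, whence $a_1({\mathcal F}_0)-a_{n-1}({\mathcal F}_0)=n-1$. I would put ${\mathcal F}:={\mathcal F}_0(\ell)$ with $\ell:=-1-a_1({\mathcal F}_0)$; then $a_1({\mathcal F})=-1$ and $a_{n-1}({\mathcal F})=-n$, and ${\mathcal F}$ is quasi-Buchsbaum because its intermediate cohomology is ${\mathfrak m}$-torsion and vanishes in degrees $2\le i\le n-2$. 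Finally, if $a_n({\mathcal F})\ge-n$ then $m:={\rm reg}\,{\mathcal F}=\max\{\,1,\ 0,\ a_n({\mathcal F})+n+1\,\}=a_n({\mathcal F})+n+1\ge1$, so ${\mathcal F}$ is $m$-regular with ${\rm H}^n({\mathcal F}(m-1-n))\ne0$; by Lemma~\ref{ACM}, ${\mathcal O}_{{\mathbb P}^n}(-m)$ would be a direct summand of ${\mathcal F}$, hence of ${\mathcal F}_0$, contradicting the choice of ${\mathcal F}_0$. Hence $a_n({\mathcal F})\le-n-1$, and twisting the displayed decomposition by $\ell$ gives the assertion.

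The hard part will be the splitting in the second paragraph: one must check that every purely-Buchsbaum contribution detaches as a genuine direct summand of $P^{\bullet}$ in the derived category (not merely as a sub- or quotient complex), which relies on the vanishing of the relevant obstruction ${\rm Ext}$-groups and on careful multiplicity bookkeeping inside a matched block, and that no pure summand remains in $G^{\bullet}_{\alpha}$ after the reduction. Upgrading the stable equivalence to an isomorphism uses the Krull--Schmidt property of vector bundles on ${\mathbb P}^n$. By comparison, the twist normalization and the bound $a_n({\mathcal F})\le-n-1$ via Lemma~\ref{ACM} are routine.
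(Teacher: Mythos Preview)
Your argument is correct and takes a genuinely different route from the paper. The paper proceeds by direct iteration: normalize so that ${\rm reg}\,{\mathcal E}=1$, then peel off one summand at a time via three cases. Case~I (when $a_n({\mathcal E})=-n$) uses Lemma~\ref{ACM} to split off a line bundle; Case~II (when $a_{n-1}({\mathcal E})=-n+1$) and Case~III (when $a_1({\mathcal E})=-1$ but $a_{n-1}({\mathcal E})\le-n-1$) use the Koszul-type exact sequences from the fourth proof of~(\ref{Chang-Goto}) together with Lemma~\ref{BuchsbaumLemma} to split off a copy of $\Omega^{n-1}_{{\mathbb P}^n}(n-1)$ or $\Omega^1_{{\mathbb P}^n}(1)$ respectively; one then re-twists and repeats until none of the three cases applies, which forces $a_1=-1$, $a_{n-1}=-n$, $a_n\le-n-1$. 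Your approach instead performs the entire decomposition at once inside ${\bf FinL}$ via the Horrocks correspondence, using the ${\rm Ext}$-vanishing ${\rm Ext}^{n-1}_S(k(\beta),k(\alpha))_0=0$ for $\beta\ne\alpha+n-1$ to block-diagonalize the extension class and then reducing each block. This is conceptually cleaner and makes transparent why only $\Omega^1$ and $\Omega^{n-1}$ appear (they are ${\rm Syz}$ of the pure pieces $k(c)[-1]$ and $k(c)[-(n-1)]$), and it identifies the non-Buchsbaum obstruction directly with $\xi\ne0$; the price is heavier machinery (the categorical equivalence, derived-category splittings, and Krull--Schmidt). The paper's argument is more hands-on and stays within the toolkit of Section~2 (regularity, Lemma~\ref{ACM}, Lemma~\ref{BuchsbaumLemma}), which is then reused in the syzygy-theoretic analysis of Section~4.
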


\begin{proof}
By twisting if necessary, we may assume that ${\rm reg}\, {\mathcal E} = 1$.
In other words, $a_i({\mathcal E}) \le -i$ for any $i \ge 1$,
and $a_i({\mathcal E}) = -i$ for some $i = 1,n-1,n$.
\medskip

\noindent
{\bf Case I}.
If $a_n({\mathcal E}) = -n$, that is, ${\rm H}^n({\mathcal E}(-n)) \ne 0$,
then ${\mathcal O}_{{\mathbb P}^n}(-1)$ is 
a direct summand of ${\mathcal E}$ by Lemma~\ref{ACM}. Thus we have
a quasi-Buchsbaum bundle ${\mathcal E}'$ such that
${\mathcal E} \cong {\mathcal E}'\oplus {\mathcal O}_{{\mathbb P}^n}(-1)$.
Then we may reduce to a vector
bundle ${\mathcal E}'$ of lower rank.
\medskip

\noindent
{\bf Case II}.
If $a_q({\mathcal E}) = -q$, then a nonzero element $s \in
{\rm H}^q({\mathcal E}(-q))$ can be lifted up to
${\rm H}^0({\mathcal E} \otimes \Omega_{{\mathbb P}^n}^{q \vee}(-q))$,
and the corresponding element $t \in {\rm H}^{n-q}({\mathcal E}^{\vee}(-n-1+q))$
by Serre duality can be also lifted up to ${\rm H}^0({\mathcal E}^{\vee}
\otimes {\Omega}^q_{{\mathbb P}^n}(q))$.
Indeed, as in the fourth proof of (\ref{Chang-Goto}), 
surjective maps
${\rm H}^0({\mathcal E} \otimes \Omega_{{\mathbb P}^n}^{q \vee}(-q))
\to {\rm H}^q({\mathcal E}(-q))$
and
${\rm H}^0({\mathcal E}^{\vee}
\otimes {\Omega}^q_{{\mathbb P}^n}(q)) \to {\rm H}^{n-q}({\mathcal E}^{\vee}(-n-1+q))$
give $f : \Omega_{{\mathbb P}^n}^{n-1}(n-1) \to
{\mathcal E}$ and $g : {\mathcal E} \to \Omega_{{\mathbb P}^n}^{n-1}(n-1)$ corresponding to $s$ and $t$ respectively.
Hence we see $\Omega_{{\mathbb P}^n}^q(q)$ is a direct summand of
${\mathcal E}$ because $g \circ f $ is an isomorphism.
Since ${\mathcal E} \cong {\mathcal E}'\oplus \Omega_{{\mathbb P}^n}^q(q)$
for some vector bundle ${\mathcal E}'$ by (\ref{BuchsbaumLemma}),
we may reduce to a vector
bundle ${\mathcal E}'$ of lower rank again.
\medskip

\noindent
{\bf Case III}.
If $a_1({\mathcal E}) = -1$ and $a_q({\mathcal E}) \le -q-2$, then 
a nonzero element $s \in
{\rm H}^p({\mathcal E}(-p))$ can be lifted up to
${\rm H}^0({\mathcal E} \otimes \Omega_{{\mathbb P}^n}^{p \vee}(-p))$,
and the corresponding element $t \in {\rm H}^{n-p}({\mathcal E}^{\vee}(-n-1+p))$
by Serre duality can be also lifted up to ${\rm H}^0({\mathcal E}^{\vee}
\otimes {\Omega}^p_{{\mathbb P}^n}(p))$ by the same way as in Case II.
Thus
we have a direct summand
$\Omega^1_{{\mathbb P}^n}(1)$ of ${\mathcal E}$ as desired.
\medskip

Hence by taking out direct summands above we have a vector
bundle ${\mathcal F}$ with
with
$a_p({\mathcal F}) = -p$, $a_q({\mathcal F}) = -q-1$ and
$a_n({\mathcal F}) \le -n-1$.
\end{proof}

\begin{prop}
\label{newmainth}
Let ${\mathcal E}$ be an indecomposable quasi-Buchsbaum  bundle on 
${\mathbb P}^n$.
Assume that ${\rm H}^i_*({\mathcal E}) = 0$, $2 \le i \le n-2$.
Then there is an exact sequence for some $m$.
\[ 0 \to  {\mathcal O}_{{\mathbb P}^n}(-1)^{\oplus} \to
\Omega^1_{{\mathbb P}^n}(n)^{\oplus} \oplus 
{\mathcal O}_{{\mathbb P}^n}^{\oplus}
\to {\mathcal E}(m)^{\vee} \to 0. \]
\end{prop}

\begin{proof}
From (\ref{observation}), we may assume
$a_1({\mathcal E}) = -1$, $a_{n-1}({\mathcal E}) = -n$
and $a_n({\mathcal E}) \le -n-1$ in order to investigate quasi-Buchsbaum bundles.

By (\ref{lem1}),  we have an exact sequence
$0 \to {\mathcal E} \to {\mathcal F} \to {\mathcal L} \to 0$
with ${\rm H}^1_*({\mathcal F}) = 0$ with
${\rm H}^1_*({\mathcal F}) = 0$ by (\ref{lem1}), where 
${\mathcal L} \cong {\mathcal L}' \oplus {\mathcal L}''$,
${\mathcal L}' = \oplus {\mathcal O}_{{\mathbb P}^n}(1)$,
${\mathcal L}'' = \oplus {\mathcal O}_{{\mathbb P}^n}(e_i)$ for
$e_i > 1$.
Since ${\rm H}^i_*({\mathcal F}) = 0$, $1 \le i \le n-2$ and
${\rm H}^{n-1}_*({\mathcal F}) \cong {\rm H}^{n-1}_*({\mathcal E})$, 
from the structure theorem (\ref{Chang-Goto})
we have an isomorphism ${\mathcal F} \cong {\mathcal F}' \oplus
{\mathcal F}'' \oplus {\mathcal M}$ by the structure theorem (\ref{Chang-Goto}),
where ${\mathcal F}' =
\oplus \Omega^{n-1}_{{\mathbb P}^n}(n)$,
${\mathcal F}''= \oplus \Omega^{n-1}_{{\mathbb P}^n}(f_j)$ 
for some $f_j > n$ and ${\mathcal M}$ is a direct sum of line bundles.
Then we have a commutative diagram of exact sequences
\[
\begin{array}{ccccccccc}
0  & \to &  {\mathcal L}'^{\vee} & \stackrel{\varphi}{\to}
 & {\mathcal F}'^{\vee} \oplus {\mathcal M}^{\vee}  
& \to & {\rm Coker}\, \varphi & \to & 0 \\
 & & \downarrow  & & \downarrow & & \downarrow & & \\
0 & \to & {\mathcal L}^{\vee}  & \to &  {\mathcal F}^{\vee}
 & \to & {\mathcal E}^{\vee}  & \to & 0, \\
\end{array}
\]
Note that ${\mathcal L}'^{\vee} = \oplus {\mathcal O}_{{\mathbb P}^n}(-1)
\to {\mathcal F}''^{\vee} \cong \oplus \Omega^1_{{\mathbb P}^n}(n+1-f_j)$ is a zero map
because $\Gamma(\Omega^1_{{\mathbb P}^n}(\ell)) = 0$
for $\ell \le 1$.
Thus ${\rm Coker}\, \varphi$ is a direct summand of ${\mathcal E}^{\vee}$.
Since ${\mathcal E}$ is indecomposable, we see that ${\mathcal F}'' = 0$,
which yields nonzero elements in ${\rm H}^{n-1}_*({\mathcal E})$ are
concentrated in degree $-n$. Similarly, we have ${\rm H}^1({\mathcal E}(\ell))
 = 0$ for $\ell \ne -1$.

Now we have an exact sequence
\[ 0 \to {\mathcal L}^{\vee} = \oplus {\mathcal O}_{{\mathbb P}^n}(-1) \to
 {\mathcal F}^{\vee} = \oplus \Omega^1_{{\mathbb P}^n}(n) \oplus {\mathcal M}^{\vee}
\to {\mathcal E}^{\vee} \to 0. \]
Then we will show that ${\mathcal M}^{\vee}$ has only ${\mathcal O}_{{\mathbb P}^n}$
as a direct summand. Clearly it has no summand of the form 
${\mathcal O}_{{\mathbb P}^n}(\ell)$, $\ell \le -1$. 
If it has a summand of the form ${\mathcal O}_{{\mathbb P}^n}(\ell)$,
$\ell \ge 1$, then ${\rm H}^1_*({\mathcal E})$ has an element in the cokernel
of a nonzero map $\Gamma_*({\mathcal O}_{{\mathbb P}^n}(-\ell))
\to \Gamma_*({\mathcal O}_{{\mathbb P}^n}(1))$ not annihilated
by ${\mathfrak m}$, which contradicts with the quasi-Buchsbaum property.
\end{proof}

\begin{rem}
Our result (\ref{newmainth}) gives a detailed description of
Chang \cite[Theorem 4]{Chang2000} 
by using the Castelnuovo-Mumford regularity.

Moreover,  this recovers the fact (\ref{ES-KR}) that
an indecomposable quasi-Buchsbaum  bundle ${\mathcal E}$
of rank $2$ on  ${\mathbb P}^3$
is isomorphic to a null-correlation bundle with some twist. Indeed,
since ${\mathcal E}^{\vee} \cong {\mathcal E} \otimes \wedge^2 {\mathcal E}^{\vee}$,
we can put $t = \dim_k {\rm H}^1_*({\mathcal E}) = \dim_k {\rm H}^2_*({\mathcal E})$.
Then the short exact sequence in (\ref{newmainth}) gives 
${\rm rank}\, {\mathcal E} \ge 2t$, which implies $t=1$ as desired.
\end{rem}

\begin{cor}
\label{mainth1}
Let ${\mathcal E}$ be an indecomposable quasi-Buchsbaum  bundle on 
${\mathbb P}^n$.
Assume that ${\rm H}^i_*({\mathcal E}) = 0$, $2 \le i \le n-2$
and $\dim_k {\rm H}^1_*({\mathcal E}) = \dim_k {\rm H}^{n-1}_*({\mathcal E}) =1$.
Then ${\mathcal E}$ is isomorphic to one of the follows with some twist:
\begin{itemize} 
\item[{\rm (i)}]
Null-correlation bundle with $n$ odd
\item[{\rm (ii)}]
${\displaystyle 0 \to {\mathcal O}_{{\mathbb P}^n}(-1) \stackrel{\psi}{\to}
\Omega_{{\mathbb P}^n}(1)
\oplus {\mathcal O}_{{\mathbb P}^n}^{\oplus n-2m} \to {\mathcal E}^{\vee} \to 0}$, \par
where $\psi$ is given by $\varphi:  {\mathcal O}_{{\mathbb P}^n} \to
\Omega_{{\mathbb P}^n}(2)$ of rank $2m$ for some $1 \le m < n/2$
in {\rm (\ref{skew-symmetric})}.
\end{itemize}
\end{cor}

\begin{proof}
We may assume that ${\mathcal E}$ is not Buchsbaum and ${\rm H}^1({\mathcal E}(-1))
\cong {\rm H}^{n-1}({\mathcal E}(-n)) \cong k$. From (\ref{newmainth}), we have an
exact sequnce $0 \to  {\mathcal E} \to \Omega^{n-1}_{{\mathbb P}^n}(n) \oplus
{\mathcal L'} \to
{\mathcal O}_{{\mathbb P}^n}(1) \to 0$, where ${\mathcal L'}$ is a direct sum of
line bundles, because
${\rm H}^1_*({\mathcal E}) \cong k(1)$ and  ${\rm H}^{n-1}_*({\mathcal E}) \cong
k(n)$. Then we have a short exact sequence
$ 0 \to {\mathcal O}_{{\mathbb P}^n} \to \Omega^1_{{\mathbb P}^n}(2) \oplus 
{\mathcal O}_{{\mathbb P}^n}(1)^{\oplus r} \to {\mathcal E}^{\vee}(1) \to 0$.
A map  
$\varphi : {\mathcal O}_{{\mathbb P}^n} \to \Omega^1_{{\mathbb P}^n}(2)$
in the exact sequence is classified as in (\ref{skew-symmetric}) according to
the ${\rm rank}\, \varphi = 2m$, explicitly $\varphi$ is written as
$(x_1, -x_0, x_3, -x_2, \cdots, x_{2m-1}, -x_{2m-2}, 0, \cdots,0)$ by coordinate change.
A map ${\mathcal O}_{{\mathbb P}^n} \to 
{\mathcal O}_{{\mathbb P}^n}(1)^{\oplus r}$ is defined by linear polynomials 
$f_i$, and $I = (x_0, \cdots, x_{2m-1}, f_1, \cdots, f_r)$ must be minimally
generated and have no zero-points in ${\mathbb P}^n$.
Thus we have $r = n-2m$.
\end{proof}

\section{Syzygy theoretic approach to quasi-Buchsbaum bundles 
on ${\mathbb P}^3$}

Our topic of this section is a syzygy theoretic approach to study
the structure of a class of quasi-Buchsbaum bundles.
In particular, we focus on a study of quasi-Buchsbaum bundles
 on ${\mathbb P}^3$
with $\dim_k {\rm H}^1_*({\mathcal E})=
\dim_k{\rm H}^2_*({\mathcal E})=1$
by constructing a free resolution arising from their non-vanishing intermediate
cohomologies.
We will describes quasi-Buchsbaum bundles on ${\mathbb P}^3$
and the corresponding skew-symmetric matrices
in a view of standard system of parameters
developed from the theory of Buchsbaum rings.
\medskip

Let $S = k[x_0,\cdots,x_n]$ be a polynomial ring over a field $k$
with $\deg x_i = 1$, $i=0, \cdots,n$.
Let $M$ be a finitely generated graded $S$-module of $\dim M = d + 1$.
Assume that $M$ has a finite local cohomology, that is,
$\ell ({\rm H}^i_{\mathfrak m}(M)) < \infty$, equivalently $\widetilde{M}$ is locally
free on ${\rm Proj}\, S$.

\begin{df}[\cite{T}]
\label{standard}
Let $f_1,\cdots,f_e$ be a part of homogeneous system of parameters
for an $S$-module $M$. We call $f_1,\cdots,f_e$ as a standard system if
${\mathfrak q}{\rm H}^i_{\mathfrak m}(M/{\mathfrak q}_jM) = 0$
for all nonnegative integers $i$, $j$ with $i + j \le d$, where
${\mathfrak q}_j = (f_1,\cdots,f_j)$, $j = 0,\cdots,e$ and ${\mathfrak q} = 
{\mathfrak q}_e$.
\end{df}

\begin{prop}[{\rm \cite{T}}]
Let $y_0,\cdots,y_d$ be a homogeneous system of parameters for an $S$-module
$M$. Then $y_0,\cdots,y_d$ is standard if and only if
the natural maps from the Koszul cohomologies
$ {\rm H^i}((y_0,\cdots,y_d); M) \to {\rm H}^i_{\mathfrak m}(M)$ for surjective
for $0\le i \le d$
\end{prop}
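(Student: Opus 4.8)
The plan is to prove the two implications in parallel, by induction on $d=\dim M-1$, the inductive step being a reduction from the pair $(M; y_0,\cdots,y_d)$ to the pair $(M/y_0M; y_1,\cdots,y_d)$. Set $\mathfrak{q}=(y_0,\cdots,y_d)$, $\mathfrak{q}_j=(y_0,\cdots,y_{j-1})$ and $M_j=M/\mathfrak{q}_jM$, so that $\dim M_j=d+1-j$. Two elementary facts will be used throughout. First, the ideal $\mathfrak{q}$ annihilates each Koszul (co)homology module ${\rm H}^i((y_0,\cdots,y_d);M)$, and by self-duality of the Koszul complex ${\rm H}^i((y_0,\cdots,y_d);M)\cong {\rm H}_{d+1-i}((y_0,\cdots,y_d);M)$ has finite length because $\mathfrak{q}$ is $\mathfrak{m}$-primary. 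Second, $\sqrt{\mathfrak{q}}=\mathfrak{m}$, so ${\rm H}^i_{\mathfrak{q}}(M)={\rm H}^i_{\mathfrak{m}}(M)$ and the map in the statement is the natural map ${\rm H}^i(K^\bullet((y_0,\cdots,y_d);M))\to \varinjlim_t{\rm H}^i(K^\bullet((y_0^t,\cdots,y_d^t);M))={\rm H}^i_{\mathfrak{m}}(M)$ obtained by passing to the direct limit.

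For the implication ``surjectivity in degrees $\le d$ $\Rightarrow$ standard'', the first fact together with the surjectivity gives at once $\mathfrak{q}\,{\rm H}^i_{\mathfrak{m}}(M)=0$ for $i\le d$ (the image of a $\mathfrak{q}$-annihilated module under a module homomorphism is $\mathfrak{q}$-annihilated), which is the case $j=0$ of the standard condition and which forces $M_1=M/y_0M$ to be again generalized Cohen-Macaulay. The reduction step (described below) then yields the surjectivity of the natural maps for $(M_1; y_1,\cdots,y_d)$ in degrees $\le d-1$, so by the inductive hypothesis $y_1,\cdots,y_d$ is standard for $M_1$; since $M_j=M_1/(y_1,\cdots,y_{j-1})M_1$ and $y_0$ acts as $0$ on $M_j$ for $j\ge 1$, this gives back $\mathfrak{q}\,{\rm H}^i_{\mathfrak{m}}(M_j)=0$ for all $i+j\le d$, i.e.\ $y_0,\cdots,y_d$ is standard for $M$. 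The converse implication is entirely parallel: standardness supplies both $\mathfrak{q}\,{\rm H}^i_{\mathfrak{m}}(M)=0$ and the standardness of $y_1,\cdots,y_d$ for $M_1$, hence surjectivity for $M_1$ by induction and then for $M$ by the reduction step. The base case $d=0$ is immediate: both conditions are equivalent to $(0 :_M y_0)={\rm H}^0_{\mathfrak{m}}(M)$, using that $(0 :_M y_0)$ has finite length (the parameter $y_0$ lies in no minimal prime of the generalized Cohen-Macaulay module $M$) and is therefore contained in ${\rm H}^0_{\mathfrak{m}}(M)$.

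The heart of the argument is the reduction step: assuming $\mathfrak{q}\,{\rm H}^i_{\mathfrak{m}}(M)=0$ for $i\le d$, the natural maps for $(M; y_0,\cdots,y_d)$ are surjective in degrees $\le d$ if and only if those for $(M_1; y_1,\cdots,y_d)$ are surjective in degrees $\le d-1$. To prove this I would split off $y_0$: from the factorizations $K^\bullet((y_0,\cdots,y_d);M)=K^\bullet((y_0);S)\otimes_S K^\bullet((y_1,\cdots,y_d);M)$ and $\check{C}^\bullet_{(y_0,\cdots,y_d)}=\check{C}^\bullet_{(y_0)}\otimes_S\check{C}^\bullet_{(y_1,\cdots,y_d)}$ of the stable Koszul complex computing ${\rm H}^\bullet_{\mathfrak{m}}(M)$, one obtains a map of long exact sequences linking ${\rm H}^\bullet((y_0,\cdots,y_d);M)$ and ${\rm H}^\bullet_{\mathfrak{m}}(M)$ to the cohomologies of $y_1,\cdots,y_d$ on $M$ and on its localization at $y_0$; the hypothesis $y_0\,{\rm H}^i_{\mathfrak{m}}(M)=0$ makes the pertinent connecting maps vanish, and, combined with the short exact sequences $0\to(0 :_M y_0)\to M\to M/(0 :_M y_0)\to 0$ and $0\to M/(0 :_M y_0)\to M\to M_1\to 0$ in which $(0 :_M y_0)$ has finite length, lets one descend from $M$ to $M_1$ degree by degree. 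Equivalently, everything can be organized through the spectral sequence ${\rm E}^{p,q}_2={\rm H}^p((y_0,\cdots,y_d);{\rm H}^q_{\mathfrak{m}}(M))\Rightarrow {\rm H}^{p+q}((y_0,\cdots,y_d);M)$, built like the one in the fourth proof of (\ref{Chang-Goto}) but with $K_\bullet((y_0,\cdots,y_d);S)$ in place of $K_\bullet((x_0,\cdots,x_n);S)$ --- the columns $q\ge 1$ of the complementary filtration vanish because $y_0,\cdots,y_d$ generates the unit ideal in every localization $S_{x_{i_1}\cdots x_{i_r}}$ --- and whose corner edge map is precisely the natural map of the statement; surjectivity in degrees $\le d$ then amounts to $\mathfrak{q}\,{\rm H}^i_{\mathfrak{m}}(M)=0$ together with the vanishing of all differentials $d_r$ emanating from ${\rm E}^{0,i}_r$ for $i\le d$, and a diagram chase identifies this package of vanishings with the standard condition on the quotients $M_j$. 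I expect the main obstacle to be exactly this last bookkeeping: tracking the finite-length correction term $(0 :_M y_0)$ and the differentials governed by the $M_j$, and verifying that they are controlled by the annihilator conditions --- this is where the generalized Cohen-Macaulay hypothesis is essential, since it is what keeps $M_1$ generalized Cohen-Macaulay and makes $(0 :_M y_0)$ of finite length.
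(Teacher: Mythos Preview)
The paper does not give its own proof of this proposition: it is stated with a citation to Trung~\cite{T} and left unproved, so there is nothing in the paper to compare your argument against directly.

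That said, your approach is the standard one from the cited literature (Trung, Schenzel, St\"uckrad--Vogel): induction on $d$ by passing from $M$ to $M/y_0M$, using that $\mathfrak{q}$ annihilates the Koszul cohomology so that surjectivity forces $\mathfrak{q}\,{\rm H}^i_{\mathfrak m}(M)=0$, and organizing the reduction through the spectral sequence ${\rm E}^{p,q}_2={\rm H}^p((y_0,\cdots,y_d);{\rm H}^q_{\mathfrak m}(M))\Rightarrow{\rm H}^{p+q}((y_0,\cdots,y_d);M)$ whose edge map is exactly the natural map in the statement. Your outline is correct; the ``bookkeeping'' you flag is genuine but routine once one uses that $(0:_M y_0)\subseteq{\rm H}^0_{\mathfrak m}(M)$ has finite length for generalized Cohen--Macaulay $M$, so the two short exact sequences you wrote down induce isomorphisms on local cohomology in the relevant range and let the induction go through. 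This is essentially the argument of \cite{T} and \cite{SV}, so had the paper included a proof it would have matched yours.
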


\begin{rem}[{\rm \cite{SV,T}}]
\label{standard=Buchsbaum}
In general, a graded $S$-module $M$ is Buchsbaum if and only if
any homogeneous system of parameters for the $S$-module $M$ is standard.
\end{rem}

Now let us consider a quasi-Buchsbaum bundle ${\mathcal E}$ on ${\mathbb P}^n$.
Let $S = k[x_0,\cdots,x_n]$ with ${\mathfrak m} = (x_0,\cdots,x_n)$
and $M = \Gamma_*({\mathcal E})$.
Note that
$M$ has the property ${\rm H}^i_{\mathfrak m}(M) = 0$
for $i=0,1$ and
${\mathfrak m}{\rm H}^i_{\mathfrak m}(M) = 0$ for $2 \le i \le n$.
Then $x_0,\cdots,x_n$ is a homogeneous system of parameters for
a graded $S$-module $M$. 
Now we will give slightly different definition of a standard ideal
from usual commutative algebra, not considering ${\rm H}^0_{\mathfrak m}$
and ${\rm H}^1_{\mathfrak m}$
local cohomologies through saturation and sheafification. 

\begin{df}
\label{standard2}
A homogeneous ideal $I \subset S$ is called standard for $M$ if any part of
homogeneous system of parameters $y_1,\cdots,y_j (\in I)$ is standard for any
$j \le n-2$,
that is, $(y_1,\cdots,y_j) {\rm H}^i_{\mathfrak m}(M/(y_1,\cdots,y_j)M) = 0$ for 
$2 \le i \le n-j$.  
\end{df}

\begin{rem}
\label{standardrem}
A vector bundle ${\mathcal E}$ on ${\mathbb P}^n$ is Buchsbaum if and only if 
${\mathfrak m}$ is standard for $M =\Gamma_*({\mathcal E})$ in the sense of (\ref{standard2}).
\end{rem}

Before proceeding on detailed study of quasi-Buchsbaum bundles, we
state a useful lemma, which follows from \cite{MTrieste, M2019} but
not necessarily clearly mentioned.

\begin{lem}
\label{commalg}
Let $S = k[x_0,\cdots,x_n]$ be the polynomial ring. Let $M$
be a quasi-Buchsbaum graded $S$-module of $\dim M = n+1$.
Let $y_1, y_2$ be a part of linear system of parameters
for $M$. Let $i$ be an integer with $i \le n-1$. Then
$y_2{\rm H}^i_{\mathfrak m}(M/y_1M) = 0$ if and only if
$y_1{\rm H}^i_{\mathfrak m}(M/y_2M) = 0$.
\end{lem}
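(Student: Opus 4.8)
The plan is to reduce the statement to a symmetric criterion that does not visibly distinguish $y_1$ from $y_2$, namely a vanishing condition on a Koszul-type cohomology of the pair $(y_1,y_2)$ acting on $M$. First I would record the hypotheses in cohomological terms: since $M$ is quasi-Buchsbaum of dimension $n+1$, we have ${\mathfrak m}{\rm H}^j_{\mathfrak m}(M)=0$ for $2\le j\le n$ (and ${\rm H}^0_{\mathfrak m}(M)={\rm H}^1_{\mathfrak m}(M)=0$ after the usual reduction to $\widetilde M$ locally free, cf. the discussion before (\ref{standard2})). Because $y_1,y_2$ are part of a linear system of parameters, multiplication by $y_\ell$ is the zero map on each ${\rm H}^j_{\mathfrak m}(M)$ in the relevant range. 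Now write down the long exact sequence of local cohomology attached to $0\to M(-1)\xrightarrow{y_1} M\to M/y_1M\to 0$. Using $y_1\cdot{\rm H}^j_{\mathfrak m}(M)=0$, this breaks into short exact sequences
\[
0\to {\rm H}^i_{\mathfrak m}(M)\to {\rm H}^i_{\mathfrak m}(M/y_1M)\to {\rm H}^{i+1}_{\mathfrak m}(M)\to 0
\]
(shifted appropriately), and the same for $y_2$. So ${\rm H}^i_{\mathfrak m}(M/y_1M)$ is an extension of ${\rm H}^{i+1}_{\mathfrak m}(M)$ by ${\rm H}^i_{\mathfrak m}(M)$, both of which are already killed by ${\mathfrak m}$.

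The key step is then to identify the action of $y_2$ on ${\rm H}^i_{\mathfrak m}(M/y_1M)$ with a connecting/boundary datum that is intrinsically symmetric in $y_1$ and $y_2$. Concretely: $y_2$ annihilates the sub ${\rm H}^i_{\mathfrak m}(M)$ and the quotient ${\rm H}^{i+1}_{\mathfrak m}(M)$, so $y_2$ induces a well-defined $S$-linear map
\[
\bar y_2:\ {\rm H}^{i+1}_{\mathfrak m}(M)\ \longrightarrow\ {\rm H}^i_{\mathfrak m}(M),
\]
and $y_2\cdot{\rm H}^i_{\mathfrak m}(M/y_1M)=0$ if and only if $\bar y_2=0$. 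The claim I would prove is that this induced map equals (up to sign) the ``double boundary'' $\varphi_{y_1\wedge y_2}:{\rm H}^{i+1}_{\mathfrak m}(M)\to{\rm H}^i_{\mathfrak m}(M)$ obtained exactly as the $d_2$-type differential described just above Remark~\ref{skew-symmetric_matrix} — the snake-lemma map built from the $2\times 2$ grid of multiplications by $y_1$ and $y_2$ on the sequences $0\to M(-1)\xrightarrow{y_2}M\to M/y_2M\to 0$ and its $y_1$-twist. That map is manifestly antisymmetric under swapping $y_1\leftrightarrow y_2$ (one transposes the double complex), hence in particular $\varphi_{y_1\wedge y_2}=0 \iff \varphi_{y_2\wedge y_1}=0$. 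Running the identical argument with the roles of $y_1$ and $y_2$ exchanged gives: $y_1\cdot{\rm H}^i_{\mathfrak m}(M/y_2M)=0 \iff \varphi_{y_2\wedge y_1}=0 \iff \varphi_{y_1\wedge y_2}=0 \iff y_2\cdot{\rm H}^i_{\mathfrak m}(M/y_1M)=0$, which is the assertion.

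The main obstacle is the compatibility/naturality check in the middle step: verifying that the map $\bar y_2$ extracted from the filtration of ${\rm H}^i_{\mathfrak m}(M/y_1M)$ really coincides (sign included) with the spectral-sequence differential $\varphi_{y_1\wedge y_2}$ of \cite{MTrieste,M2019}. This is a diagram chase on the double complex $K_\bullet(y_1,y_2;S)\otimes(\text{\v Cech complex of }{\mathcal E})$ — it is bookkeeping rather than deep, but one must be careful that the two constructions of the connecting map agree and that the quasi-Buchsbaum hypothesis is exactly what makes all intervening terms ${\mathfrak m}$-torsion so that the maps descend. An alternative, slightly more hands-on route that avoids naming the spectral sequence: pick $\xi\in{\rm H}^{i+1}_{\mathfrak m}(M)$, lift it to $\tilde\xi\in{\rm H}^i_{\mathfrak m}(M/y_1M)$, and compute $y_2\tilde\xi$ by chasing a \v Cech cocycle representative; one shows the resulting element of ${\rm H}^i_{\mathfrak m}(M)\subset{\rm H}^i_{\mathfrak m}(M/y_1M)$ is symmetric in $(y_1,y_2)$ because it comes from solving $y_1 a = (\text{cocycle for }\xi)$ then applying $y_2$, versus solving $y_2 b=(\text{cocycle for }\xi)$ then applying $y_1$, and the difference lies in the image of $y_1y_2$ acting on a chain, which is a coboundary. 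Either way the symmetry is the crux; everything else is the formal manipulation of the local-cohomology long exact sequences above.
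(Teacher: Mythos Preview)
Your proposal is correct and follows essentially the same route as the paper: both arguments use the quasi-Buchsbaum hypothesis to split the local-cohomology long exact sequence for $0\to M(-1)\xrightarrow{y_1}M\to M/y_1M\to 0$ into short exact sequences, then identify the action of $y_2$ on ${\rm H}^i_{\mathfrak m}(M/y_1M)$ with the snake-lemma/spectral-sequence map $\varphi_{y_1\wedge y_2}:{\rm H}^{i+1}_{\mathfrak m}(M)\to{\rm H}^i_{\mathfrak m}(M)$, whose (anti)symmetry in $y_1,y_2$ gives the equivalence. Your write-up is in fact more explicit than the paper's about the identification $\bar y_2=\varphi_{y_1\wedge y_2}$, which the paper simply asserts via the snake lemma and the references \cite{MTrieste,M2019}.
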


\begin{proof}
From the quasi-Buchsbaum property of $M$, we have the following
exact sequences
in terms of a vector bundle ${\mathcal E}$:
\[
\begin{array}{ccccccccc}
0 & \to &  {\rm H}^i_{\mathfrak m}(M)(-1) & \to 
&  {\rm H}^i_{\mathfrak m}(M/y_1M)(-1) & \to &  
 {\rm H}^{i+1}_{\mathfrak m}(M)(-2) & \to & 0 \\
& & \downarrow y_2 & & \downarrow y_2 & & \downarrow y_2 & \\
0 & \to &  {\rm H}^i_{\mathfrak m}(M) & \to 
&  {\rm H}^i_{\mathfrak m}(M/y_1M) & \to &  
 {\rm H}^{i+1}_{\mathfrak m}(M)(-1) & \to & 0 \\
\end{array}
\]
Then the corresponding spectral sequence map
\[ d^{-n-1,i+1}_2 : E^{-n-1,i+1}_2 = {\rm H}^{i+1}_{\mathfrak m}
(M)(-2) \to E^{-n,i}_2 
= {\rm H}^i_{\mathfrak m}(M)^{(n+1)n/2} \]
consists of the map
\[ \varphi_{y_1 \wedge y_2} :  {\rm H}^{i+1}_{\mathfrak m}(M)(-2)
\to   {\rm H}^i_{\mathfrak m}(M)  \]
coming from the snake map of the commutative diagram.
Thus the following are equivalent:
(i) $y_2{\rm H}^i_{\mathfrak m}(M/y_1M) = 0$, 
(ii) $y_1{\rm H}^i_{\mathfrak m}(M/y_2M) = 0$ and
(iii)
$\varphi_{y_1 \wedge y_2} :  {\rm H}^{i+1}_{\mathfrak m}(M)(-2)
\to   {\rm H}^i_{\mathfrak m}(M)$ is zero,
as desired.
\end{proof}

Now let us consider an exact sequence:
\[ L^{\bullet} : 0 \to {\mathcal O}_{{\mathbb P}^n}(-n) \to 
{\mathcal O}_{{\mathbb P}^n}(-n+1)^{n+1} \to \cdots
\to {\mathcal O}_{{\mathbb P}^n}(-1)^{(n+1)n/2} \to \Omega^1_{{\mathbb P}^n}(1) \to 0, \]
where $L^{-n-1} =  {\mathcal O}_{{\mathbb P}^n}(-n)$, 
$L^{-2} =  {\mathcal O}_{{\mathbb P}^n}(-1)^{(n+1)n/2}$ and $L^{-1} = 
\Omega^1_{{\mathbb P}^n}(1)$,
arising from the Koszul complex $K_{\bullet}(x_0,\cdots,x_n;S)$,
Let $C^{\bullet}$ be the \u{C}ech resolution $C^i = C^i({\mathfrak U}; {\mathcal E})$,
where $\{ {\mathfrak U} \} = \{ {\rm D}_+(x_i) | 0 \le i \le 3 \} $
 is an affine open covering of ${\mathbb P}^3$.
Then we have a double complex $C^{\bullet \bullet} = 
L^{\bullet} \otimes C^{\bullet}$.
\medskip

Let us consider the case $n=3$ for a while, and assume
$\dim_k{\rm H}^1_*({\mathcal E}) = 1$.
From the consideration of the previous section, we assume
that ${\rm H}^1({\mathcal E}(-1)) \cong k$ and
${\rm H}^2({\mathcal E}(-3)) \ne 0$.
\medskip

The double complex $C^{\bullet \bullet}$ gives
a spectral sequence 
$\{ E^{p,q}_r \}$ with $E^{-4,q}_1 = {\rm H}^q_*({\mathcal E}(-3))$ and so forth.
Since ${\mathcal E}$ is quasi-Buchsbaum, we have a map
\[ d^{-4,2}_2 : E^{-4,2}_2 = {\rm H}^2_*({\mathcal E}(-3)) \to E^{-2,1}_2 
= {\rm H}^1_*({\mathcal E}(-1))^6. \]

Let us explain the above map explicitly. In fact,
$d^{-4,2}_2 : {\rm H}^2_*({\mathcal E}(-3)) \to {\rm H}^1_*({\mathcal E}(-1))^6$
is written in the following diagram:
\[
\begin{array}{ccccccccc}
0 & \to &  {\rm H}^1_*({\mathcal E}(-2)) & \to 
&  {\rm H}^1_*({\mathcal E}(-2)|_{H}) & \to &   {\rm H}^2_*({\mathcal E}(-3)) & \to & 0 \\
& & \downarrow x_j & & \downarrow x_j & & \downarrow x_j & \\
0 & \to &  {\rm H}^1_*({\mathcal E}(-1)) & \to 
&  {\rm H}^1_*({\mathcal E}(-1)|_{H}) & \to &   {\rm H}^2_*({\mathcal E}(-2)) & \to & 0 \\
\end{array}
\]
from a short exact sequence $0 \to {\mathcal E}(-1) \stackrel{x_i}{\to}
 {\mathcal E} \to {\mathcal E}|_H \to 0$, where $H = \{ x_i = 0 \}$.
Since mutliplication maps
${\rm H}^1_*({\mathcal E}(-2)) \stackrel{x_j}{\to} {\rm H}^1_*({\mathcal E}(-1))$
and  ${\rm H}^2_*({\mathcal E}(-3)) \stackrel{x_j}{\to}  {\rm H}^2_*({\mathcal E}(-2))$
are zero, by snake lemma we obtain a map 
$\varphi_{x_i \wedge x_j} : {\rm H}^2_*({\mathcal E}(-3))
\to {\rm H}^1_*({\mathcal E}(-1))$, $i \ne j$, see \cite{MTrieste, M2019}.
In this way we see a map $d^{-4,2}_2$ is written as
\[ \Phi = \bigoplus_{0 \le i < j \le 3} \varphi_{x_i \wedge x_j} :
 {\rm H}^2_*({\mathcal E}(-3))
\to \bigoplus_{0 \le i < j \le 3}{\rm H}^1_*({\mathcal E}(-1)). \]

Then we have a skew-symmetrix
matrix $A = A_s = (a_{ij})$, $a_{ij} = \varphi_{x_i \wedge x_j}(s)$ for
$s \in {\rm H}^2_*({\mathcal E})$.
In particular, in case 
$\dim_k{\rm H}^1_*({\mathcal E}) = 1$,
a skew-symmetric matrix $A = A_{\Phi} = (a_{ij})$, where
$a_{ij} = \varphi_{x_i \wedge x_j}(1) \in k$ for $0 \le i < j \le 3$,
depends only on an equivalence class of coordinate change.

\begin{rem}
\label{skew-symmetric_matrix}
If $s (\ne 0) \in {\rm H}^2({\mathcal E}(-3))$ satisfies $A = A_s = O$,
then ${\mathcal E}$ has  $\Omega^1_{{\mathbb P}^3}$ as a direct summand
from the fourth proof of (\ref{Chang-Goto}). In case
$\dim_k{\rm H}^1_*({\mathcal E}) = 1$,
${\mathcal E}$ is Buchsbaum if and only if $A = O$.
\end{rem}

Now assume that $\dim_k {\rm H}^1_*({\mathcal E})=1$,
and study the ring-theoretic property of $M = \Gamma_*({\mathcal E})$
in terms of the matrix $A = A_{\Phi}$ from
$\Phi = \oplus \varphi_{x_i \wedge x_j}$.
What we are interested in is the ring-theoretic property of 
vector bundles ${\mathcal E}$ of two cases,
${\rm rank}\, A =2$ and ${\rm rank}\, A = 4$.
Note that ${\rm det}\, A = ({\rm pf}\, A)^2$, where ${\rm pf}\, A =
a_{01}a_{23} - a_{03}a_{12} + a_{02}a_{13}$ is the Pfaffian of the skew-symmetric
matrix $A$. Then
the rank of $A$ can be 2 or 4 depending on ${\rm pf}\, A$.

\begin{df}
Let ${\mathcal E}$ be a quasi-Buchsbaum bundle on
${\mathbb P}^3$ with $\dim_k {\rm H}^1_*({\mathcal E})=1$.
Let $A$ be the correponding skew-symmetric matrix for
a nonzero element $s \in {\rm H}^2_*({\mathcal E})$.
Then $s$ is called a nonstandard element if ${\rm rank}\, A = 4$.
\end{df}

\begin{df}
\label{nonstandard-Buchsbaum}
Let ${\mathcal E}$ be a quasi-Buchsbaum bundle on ${\mathbb P}^4$
with $\dim_k {\rm H}^1_*({\mathcal E})= \dim_k{\rm H}^2_*({\mathcal E})=1$.
Let $A$ be the correponding skew-sysmetric matrix. Then
${\mathcal E}$ is called pseudo-Buchsbaum if ${\rm rank}\, A = 2$, and
${\mathcal E}$ is called nonstandard-Buchsbaum if ${\rm rank}\, A = 4$.
\end{df}

Let us consider an spectral sequence
$\{ E^{p,q}_r \}$ with $E^{-4,q}_1 = {\rm H}^q_*({\mathcal E}(-3))$
from the double complex $C^{\bullet \bullet} = L^{\bullet} \otimes C^{\bullet}$,
where $L^{\bullet} : 0 \to {\mathcal O}_{{\mathbb P}^n}(-3) \to 
{\mathcal O}_{{\mathbb P}^n}(-2)^4
\to {\mathcal O}_{{\mathbb P}^n}(-1)^6 \to \Omega^1_{{\mathbb P}^3}(1) \to 0$
and $C^{\bullet}$ is a \u{C}ech resolution of ${\mathcal E}$.
In order to describe a map
$d^{-4,2}_2 : E^{-4,2}_2 = {\rm H}^2_*({\mathcal E}(-3)) \to E^{-2,1}_2 
= {\rm H}^1_*({\mathcal E}(-1))^6$,
in the diagram below, we have
a map from $u \in C^2({\mathcal E}(-3))$
with $\alpha (u) = d^{-3,1} (v)$
to $\beta(v)$.
\[
\begin{array}{cccccccccccc}
0 & \to & C^2({\mathcal E}(-3)) & \stackrel{\alpha}{\to}
 & C^2({\mathcal E}(-2))^4 & \to & C^2({\mathcal E}(-1))^6
 & \to & C^2({\mathcal E}\otimes \Omega^1_{{\mathbb P}^3}(1)) & \to & 0 \\
 & &  \uparrow  & & \uparrow d^{-3,1} & & \uparrow & & \uparrow &  & \\
0 & \to & C^1({\mathcal E}(-3)) & \to & C^1({\mathcal E}(-2))^4 & 
\stackrel{\beta}{\to} & C^1({\mathcal E}(-1))^6
 & \to & C^1({\mathcal E}\otimes \Omega^1_{{\mathbb P}^3}(1)) & \to & 0 \\
 & & \uparrow & & \uparrow & & \uparrow d^{-2,0} & & \uparrow &  & \\
0 & \to & C^0({\mathcal E}(-3)) & \to & C^0({\mathcal E}(-2))^4 & \to & C^0({\mathcal E}(-1))^6
 & \stackrel{\gamma}{\to} & C^0({\mathcal E}\otimes \Omega^1_{{\mathbb P}^3}(1)) & \to & 0 \\
 & & \uparrow & & \uparrow & & \uparrow & & \uparrow &  & \\
 &  & 0 &  & 0 &  & 0 &  & 0 &  & \\
\end{array}
\]
If $\Phi \ne 0$, that is, not Buchsbaum,
then we see
$\beta(v) \not\in d^{-2,0}(C^0({\mathcal E}(-1))^6)$.
As in the fourth proof of (\ref{Chang-Goto}), the exact sequences
{\small
\[
\begin{array}{ccccccccccccc}
0 & \to & {\mathcal E}(-3) & \to &
 {\mathcal E}(-2)^4 & & \to & & {\mathcal E}(-1)^6  & \to  &
{\mathcal E}(-3) \otimes \Omega_{{\mathbb P}^3}^{2 \vee} \,  (\cong {\mathcal E} \otimes
\Omega^1_{{\mathbb P}^3}(1))
& \to & 0 \\
 & & & & & \searrow & & \nearrow & & & & & \\
 & & & & & &   {\mathcal E}(-3) \otimes  \Omega_{{\mathbb P}^3}^{1 \vee}  & & & & & & \\
 & & & & & \nearrow & & \searrow & & & & & \\
 & & & & 0 & & & & 0 & & & &
\end{array}
\]
}

\noindent
give a composition of maps ${\rm H}^0({\mathcal E}(-3) \otimes 
{\Omega}^{2 \vee}_{{\mathbb P}^3}) \to {\rm H}^1
( {\mathcal E}(-3) \otimes  \Omega_{{\mathbb P}^3}^{1 \vee})
\to {\rm H}^2({\mathcal E}(-3))$, but 
$s (\ne 0) \in {\rm H}^2({\mathcal E}(-3))$ cannot be lifted to
an element of ${\rm H}^0({\mathcal E}(-3))$, which prevents us
from getting a map
 ${\Omega}^2_{{\mathbb P}^3} \to {\mathcal E}(-3)$.
\medskip

Now we state our main result of this section.

\begin{theorem}
\label{maincor}
Let ${\mathcal E}$ be a
quasi-Buchsbaum bundle on ${\mathbb P}^3$ with
$\dim_k{\rm H}^1_*({\mathcal E}) = 1$
and $\dim_k{\rm H}^2_*({\mathcal E}) \ge 1$.
If there is a nonstandard element $s \in {\rm H}^2_*({\mathcal E})$,
then ${\mathcal E}$ has a null-correlation bundle with some twist
as a direct summand.
\end{theorem}

\begin{theorem}
\label{mainth}
Let ${\mathcal E}$ be an indecomposable
quasi-Buchsbaum bundle on ${\mathbb P}^3$
with ${\displaystyle \dim_k {\rm H}^i_* ({\mathcal E}) = 1}$ for $i=1,2$.
Then ${\mathcal E}$ is nonstandard-Buchsbaum if and only if ${\mathcal E}$
is isomorphic to a null-correlation bundle with some twist.
\end{theorem}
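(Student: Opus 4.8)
The plan is to reduce, via the classification in Theorem~\ref{mainth1}, to an explicit computation of the skew--symmetric matrix $A=A_\Phi$ for the two model bundles produced there. Observe first that an indecomposable bundle ${\mathcal E}$ on ${\mathbb P}^3$ with $\dim_k{\rm H}^1_*({\mathcal E})=\dim_k{\rm H}^2_*({\mathcal E})=1$ is automatically \emph{not} Buchsbaum: by the Chang--Goto theorem~(\ref{Chang-Goto}) a Buchsbaum bundle is a direct sum of twists of $\Omega^p_{{\mathbb P}^3}$, and since both ${\rm H}^1_*({\mathcal E})$ and ${\rm H}^2_*({\mathcal E})$ are nonzero such a sum must contain summands of both $\Omega^1_{{\mathbb P}^3}$-- and $\Omega^2_{{\mathbb P}^3}$--type, contradicting indecomposability. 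Hence $A\ne O$, so ${\rm rank}\,A\in\{2,4\}$ by Remark~\ref{skew-symmetric_matrix}, and ``${\mathcal E}$ is nonstandard--Buchsbaum'' is exactly ``${\rm rank}\,A=4$''. After a twist we may assume ${\rm H}^1({\mathcal E}(-1))\cong{\rm H}^2({\mathcal E}(-3))\cong k$, the normalization under which $A$ is defined; by Theorem~\ref{mainth1}, up to this twist ${\mathcal E}$ is then isomorphic to a null--correlation bundle ${\mathcal N}$, or to the bundle ${\mathcal E}_0$ of case~(ii) of~(\ref{mainth1}) with $m=1$, i.e. the one presented by $0\to{\mathcal O}_{{\mathbb P}^3}(-1)\to\Omega^1_{{\mathbb P}^3}(1)\oplus{\mathcal O}_{{\mathbb P}^3}\to{\mathcal E}_0^{\vee}\to0$ with the map built from a skew form $\varphi:{\mathcal O}_{{\mathbb P}^3}\to\Omega^1_{{\mathbb P}^3}(2)$ of rank $2$. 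It therefore suffices to prove ${\rm rank}\,A_{{\mathcal N}}=4$ and ${\rm rank}\,A_{{\mathcal E}_0}=2$, after which both implications of Theorem~\ref{mainth} are immediate from Theorem~\ref{mainth1}.

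The common core is the following claim, which I would establish by a \v{C}ech computation: if ${\mathcal E}^{\vee}(1)\cong{\rm Coker}\,\bigl({\mathcal O}_{{\mathbb P}^3}\to\Omega^1_{{\mathbb P}^3}(2)\oplus{\mathcal L}\bigr)$ where the $\Omega^1_{{\mathbb P}^3}(2)$--component is a skew form $\omega\leftrightarrow A_0$ and the ${\mathcal L}$--component is given by forms of positive degree, then $A_\Phi$ is congruent to $A_0$; in particular ${\rm rank}\,A_\Phi={\rm rank}\,A_0$. For ${\mathcal N}$ this is Remark~\ref{skew-symmetric} case~(i), where ${\rm rank}\,A_0=n+1=4$, so ${\rm rank}\,A_{{\mathcal N}}=4$ and ${\mathcal N}$ is nonstandard--Buchsbaum; for ${\mathcal E}_0$ the skew part $\varphi$ has rank $2$, while the extra ${\mathcal O}_{{\mathbb P}^3}(1)$--direction ($=f_1$) contributes nothing to the differential $d^{-4,2}_2$ defining $\Phi$ (which only detects the Koszul syzygies landing in $\Omega^1_{{\mathbb P}^3}(2)$), so ${\rm rank}\,A_{{\mathcal E}_0}=2$ and ${\mathcal E}_0$ is merely pseudo--Buchsbaum. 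To prove the congruence I would use the snake--lemma description of $\varphi_{x_i\wedge x_j}$ recalled before Remark~\ref{skew-symmetric_matrix}: $\varphi_{x_i\wedge x_j}(1)$ is the composite ${\rm H}^2({\mathcal E}(-3))\xrightarrow{\;\sim\;}{\rm H}^1({\mathcal E}|_{H_i}(-2))\xrightarrow{\;x_j\;}{\rm H}^1({\mathcal E}|_{H_i}(-1))\xleftarrow{\;\sim\;}{\rm H}^1({\mathcal E}(-1))$ with $H_i=\{x_i=0\}$, and then trace \v{C}ech cocycles of ${\mathcal E}$ on the standard affine cover of ${\mathbb P}^3$ through these restriction and connecting maps, reading off the $a_{ij}$ in terms of the entries of $A_0$. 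With the model $\omega=(x_1,-x_0,x_3,-x_2)$ for ${\mathcal N}$ one expects $a_{01}=a_{23}\ne0$ and $a_{02}=a_{03}=a_{12}=a_{13}=0$, so ${\rm pf}\,A_{{\mathcal N}}=a_{01}a_{23}\ne0$ and ${\rm rank}\,A_{{\mathcal N}}=4$.

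The main obstacle is exactly this congruence for ${\mathcal N}$: carrying explicit \v{C}ech representatives of the generators of ${\rm H}^1_*({\mathcal N})$ and ${\rm H}^2_*({\mathcal N})$ through the connecting maps of $0\to{\mathcal N}(-1)\xrightarrow{x_i}{\mathcal N}\to{\mathcal N}|_{H_i}\to0$, with careful sign bookkeeping. There is a partial shortcut via Proposition~\ref{rank2}: since ${\mathcal N}$ is not Buchsbaum it would be enough to rule out both alternatives for ${\rm rank}\,A_{{\mathcal N}}=2$. The restriction alternative is immediate --- for every plane $H\subset{\mathbb P}^3$ the restriction sequence together with the cohomology table of ${\mathcal N}$ gives ${\rm H}^1_*({\mathcal N}|_H)\cong k(1)\oplus k(2)\ne0$, which excludes the ACM case, and $c_1({\mathcal N}|_H)=0$ is incompatible with the only rank--$2$ Buchsbaum option $\Omega^1_{{\mathbb P}^2}(\ell)$ (of odd first Chern class $2\ell-3$), so ${\mathcal N}|_H$ is never Buchsbaum. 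The standard--ideal alternative, however, still amounts to the same rank statement for $A_{{\mathcal N}}$, so some computation appears unavoidable. For ${\mathcal E}_0$ the analogous check is easy: the presentation exhibits a standard ideal generated by three linear forms (namely $(x_0,x_1,f_1)$ after normalizing $\varphi$), so ${\rm rank}\,A_{{\mathcal E}_0}\in\{0,2\}$ by Proposition~\ref{rank2}, hence $=2$. Combining ${\rm rank}\,A_{{\mathcal N}}=4$ and ${\rm rank}\,A_{{\mathcal E}_0}=2$ with Theorem~\ref{mainth1} then completes the proof of Theorem~\ref{mainth}.
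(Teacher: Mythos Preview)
Your approach is quite different from the paper's, and it is worth spelling out the contrast. The paper's proof is self-contained within the syzygy-theoretic framework of Section~4 and does \emph{not} invoke Theorem~\ref{mainth1}. Starting from the rank~$4$ matrix $A$ attached to ${\mathcal E}$, the paper (Step~I) builds a null-correlation bundle ${\mathcal N}$ from $-A$ and writes down a (non-minimal) free resolution $\bar L^{\bullet}$ of ${\mathcal N}^{\vee}$; tensoring with the \v{C}ech complex of ${\mathcal E}$ gives a double complex whose $d_2$-differential is \emph{engineered} to vanish (Step~II), so that the generator of ${\rm H}^2({\mathcal E}(-3))$ lifts to a nonzero $f\in{\rm Hom}({\mathcal N},{\mathcal E})$. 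A dual argument using the short Koszul piece $0\to{\mathcal O}(-1)\to{\mathcal O}^4\to\Omega^2(3)\to 0$ produces $g\in{\rm Hom}({\mathcal E},\Omega^2(3))$ lifting the Serre-dual class, and the pairing diagram (exactly as in Lemma~\ref{BuchsbaumLemma}) shows $g\circ f$ is an isomorphism onto the subbundle ${\mathcal N}\hookrightarrow\Omega^2(3)$, whence ${\mathcal N}$ is a direct summand of ${\mathcal E}$. In short, the paper \emph{constructs} the splitting rather than computing $A$ for candidate bundles.

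Your route---reduce via Theorem~\ref{mainth1} to the two models and compute $A$ for each---is logically sound and arguably more elementary, but the execution has gaps beyond the one you flag. The crucial one is the ``congruence'' $A_\Phi\sim A_0$: you correctly identify that it would require tracking explicit \v{C}ech cocycles through the snake maps, and you do not carry this out. Your partial shortcut for ${\mathcal N}$ (ruling out alternative~(ii) of Proposition~\ref{rank2} via $c_1$) is nice, but as you say it leaves alternative~(i) untouched, so nothing is saved. There is also a small but real issue on the ${\mathcal E}_0$ side: the presentation in Theorem~\ref{mainth1}(ii) should read ${\mathcal O}_{{\mathbb P}^3}^{\oplus 2}$, not ${\mathcal O}_{{\mathbb P}^3}$ (compare Corollary~\ref{maincor}(c); with a single linear form the map ${\mathcal O}(-1)\to\Omega^1(1)\oplus{\mathcal O}$ has a zero along the line $\{x_0=x_1=0\}$ and the cokernel is not locally free). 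More seriously, your assertion that ``the presentation exhibits a standard ideal generated by three linear forms'' is not justified: you would have to check, for $y_1,y_2,y_3$ drawn from $\{x_0,x_1,f_1,f_2\}$, that $(y_1,y_2,y_3)\,{\rm H}^2_{\mathfrak m}(M/y_iM)=0$, which is a statement about the module structure on ${\rm H}^1_*({\mathcal E}_0|_{H_i})$ and is not read off from the presentation without further work. So both directions of your reduction rest on computations that are only sketched; by contrast, the paper's spectral-sequence argument avoids computing $A$ for any specific bundle and instead uses the rank~$4$ hypothesis directly to manufacture the required morphisms.
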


The proof of (\ref{maincor}) is similar to that of (\ref{mainth}). So,
we will prove (\ref{mainth}) for simplicity.
Let us prepare some observations for starting the proof of (\ref{mainth}).

\begin{lem}
\label{reg0}
Let ${\mathcal F}$ be a vector bundle on ${\mathbb P}^n$.
If ${\rm reg}\, {\mathcal F} = 1$ and
$\Gamma ({\mathcal F}^{\vee}(-1)) \ne 0$, then ${\mathcal F}$ has a
direct summand ${\mathcal O}_{{\mathbb P}^n}(-1)$.
\end{lem}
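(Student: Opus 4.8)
The plan is to mirror the proof of Lemma~\ref{ACM}: use $\reg {\mathcal F} = 1$ to get a surjection onto ${\mathcal F}$ from a direct sum of copies of ${\mathcal O}_{{\mathbb P}^n}(-1)$, use the nonvanishing of $\Gamma({\mathcal F}^{\vee}(-1))$ to get a nonzero map ${\mathcal F} \to {\mathcal O}_{{\mathbb P}^n}(-1)$, and then split off a summand by composing the two.

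First, since $\reg {\mathcal F} = 1$, the sheaf ${\mathcal F}$ is $1$-regular, so ${\mathcal F}(1)$ is globally generated by Definition and Proposition~\ref{CMregularity}; twisting back, there is a surjection $\psi : {\mathcal O}_{{\mathbb P}^n}(-1)^{\oplus N} \to {\mathcal F}$ for some $N$. On the other hand, a nonzero element of $\Gamma({\mathcal F}^{\vee}(-1))$ is exactly a nonzero morphism $\varphi : {\mathcal F} \to {\mathcal O}_{{\mathbb P}^n}(-1)$. Consider the composite $\varphi \circ \psi : {\mathcal O}_{{\mathbb P}^n}(-1)^{\oplus N} \to {\mathcal O}_{{\mathbb P}^n}(-1)$. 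Since $\psi$ is surjective, its image is $\varphi({\mathcal F}) = \Im \varphi \ne 0$, so $\varphi \circ \psi \ne 0$. But ${\rm Hom}_{{\mathbb P}^n}({\mathcal O}_{{\mathbb P}^n}(-1)^{\oplus N}, {\mathcal O}_{{\mathbb P}^n}(-1)) \cong \Gamma({\mathcal O}_{{\mathbb P}^n})^{\oplus N} \cong k^N$, so $\varphi \circ \psi$ is given by a tuple $(c_1,\dots,c_N)$ of scalars not all zero; pick $j$ with $c_j \ne 0$. Writing $\iota_j : {\mathcal O}_{{\mathbb P}^n}(-1) \hookrightarrow {\mathcal O}_{{\mathbb P}^n}(-1)^{\oplus N}$ for the inclusion of the $j$-th summand, the composite $\varphi \circ \psi \circ \iota_j$ is multiplication by $c_j$, hence an isomorphism. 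Therefore $\sigma := c_j^{-1}(\psi \circ \iota_j) : {\mathcal O}_{{\mathbb P}^n}(-1) \to {\mathcal F}$ satisfies $\varphi \circ \sigma = {\rm id}$, so $\sigma$ is a split monomorphism and ${\mathcal F} \cong {\mathcal O}_{{\mathbb P}^n}(-1) \oplus \Ker \varphi$, which is the claim.

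I do not expect a genuine obstacle here: the only non-formal input is the passage from $1$-regularity of ${\mathcal F}$ to global generation of ${\mathcal F}(1)$, already recorded in the excerpt, and the rest is the elementary fact that a nonzero morphism between direct sums of a single line bundle factoring through a surjection splits off a summand. The one point to check carefully is that $\varphi \circ \psi \ne 0$, which is immediate from surjectivity of $\psi$ together with $\varphi \ne 0$.
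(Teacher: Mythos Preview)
Your proof is correct and follows essentially the same approach as the paper: use $1$-regularity to get a surjection from a sum of copies of ${\mathcal O}_{{\mathbb P}^n}(-1)$, use $\Gamma({\mathcal F}^{\vee}(-1))\ne 0$ to get a nonzero map ${\mathcal F}\to{\mathcal O}_{{\mathbb P}^n}(-1)$, and observe that the composite is nonzero hence an isomorphism on some summand. Your version is in fact a bit more explicit than the paper's about why the nonzero composite forces a splitting.
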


\begin{proof}
By (\ref{CMregularity}), ${\mathcal F}(1)$
is globally generated, which gives a surjective map 
${\mathcal O}_{{\mathbb P}^n}^{\oplus} \to {\mathcal F}(1)$,
and then a nonzero
composite map ${\mathcal O}_{{\mathbb P}^n}^{\oplus} \to {\mathcal F}(1)
\to {\mathcal O}_{{\mathbb P}^n}$ from $\Gamma ({\mathcal F}^{\vee}(-1)) \ne 0$.
Thus we have
direct summand ${\mathcal O}_{{\mathbb P}^n}$ of  ${\mathcal F}(1)$.
\end{proof}

\begin{rem}
\label{beforeproof}
From (\ref{observation}), 
we may assume that ${\rm H}^2_*({\mathcal E}) \cong k(3)$
and ${\rm H}^1_*({\mathcal E}) \cong k(1)$ by twisting
if necessary.
Then there exists a nonzero element 
 $s \in {\rm H}^2({\mathcal E}(-3)) \cong k$.
Further, $a_3({\mathcal E}) \le -4$ and ${\rm reg}\, {\mathcal E} = 1$.
By (\ref{reg0}), we see $\Gamma ({\mathcal F}^{\vee}(-1)) = 0$
because ${\mathcal F}$ is indecomposable.
Since ${\mathcal E}^{\vee}$ has the cohomological property similar
as above by Serre duality, we also see $\Gamma ({\mathcal F}(-1)) = 0$
by (\ref{reg0}).
\end{rem}

\begin{proof}[{\bf Proof of Theorem~\ref{mainth}}]
A null-correlation bundle is nonstandard-Buchsbaum. In fact, we have
a rank 4 skew-symmetric matrix. So we need to show the converse.
\medskip

\noindent
{\bf Step I} : To construct a null-correlation bundle ${\mathcal N}$
from a skew-symmetric
matrix $A$ of a vector bundle ${\mathcal E}$.
\medskip
 
For a nonstandard-Buchsbaum bundle ${\mathcal E}$, we have
$\Phi (s) \in  {\rm H}^1({\mathcal E}(-1))^6 \cong k^6$,
which defines a skew-symmetric matrix $A$ of rank $4$.
Then we have a null-correlation bundle ${\mathcal N}$
defined by $A$ as a short exact sequence:
\[ 0 \to {\mathcal O}_{{\mathbb P}^3}(-1) \stackrel{A}{\longrightarrow} 
\Omega^1_{{\mathbb P}^3}(1)  \to {\mathcal N}^{\vee} (\cong {\mathcal N})
\to  0. \]

Now we will construct a (not minimal) free resolution of ${\mathcal N}^{\vee}$.
Here we use the word `free resolution' if 
the sequence maintains the exactness after taking $\Gamma_*(\, \cdot \,)$.
By a commutative diagram with exact rows and columns
\[
\begin{array}{ccccccccc}
 & & 0 & & 0 & &  & \\
 & & \uparrow & & \uparrow & &  & \\
0 & \to &  {\mathcal O}_{{\mathbb P}^3}(-1) & \stackrel{A}{\longrightarrow} 
& \Omega^1_{{\mathbb P}^3}(1) & \to &  {\mathcal N}^{\vee} (\cong {\mathcal N}) & \to & 0. \\
& & \uparrow & & \uparrow & & & \\
 & & {\mathcal O}_{{\mathbb P}^3}(-1) & \stackrel{\Phi}{\longrightarrow}
 & {\mathcal O}_{{\mathbb P}^3}(-1)^6 &  &  &  & \\
 & & \uparrow & & \uparrow & & & & \\
 &  & 0 &  & {\mathcal O}_{{\mathbb P}^3}(-2)^4 & \ & &  &  \\
 & &  & & \uparrow & &  & \\
 & &  & & {\mathcal O}_{{\mathbb P}^3}(-3) & & & \\
 & &  & & \uparrow & &  & \\
 & &  & &  0 & &  &  \\
\end{array}
\]
\noindent
we have a free resolution of ${\mathcal N}^{\vee}$ by taking a mapping cone
\[ \bar{L} :
0 \to {\mathcal O}_{{\mathbb P}^3}(-3) \to {\mathcal O}_{{\mathbb P}^3}(-2)^4 
\oplus {\mathcal O}_{{\mathbb P}^3}(-1)
\to {\mathcal O}_{{\mathbb P}^3}(-1)^6 \to {\mathcal N}^{\vee} \to 0, \]
which yields the minimal free resolution
\[ 0 \to {\mathcal O}_{{\mathbb P}^3}(-3) \to {\mathcal O}_{{\mathbb P}^3}(-2)^4
\to {\mathcal O}_{{\mathbb P}^3}(-1)^5 \to {\mathcal N}^{\vee} \to 0. \]

\noindent
This resolution connects short exact sequences:
\[ 0 \to {\mathcal O}_{{\mathbb P}^3}(-3) \to {\mathcal O}_{{\mathbb P}^3}(-2)^4
 \to
 \Omega^2_{{\mathbb P}^3}(1) \to 0 \]
\[ 0 \to \Omega^2_{{\mathbb P}^3}(1) \to
 {\mathcal O}_{{\mathbb P}^3}(-1)^5 \to {\mathcal N}^{\vee} \to 0. \]

\noindent
{\bf Step II} : To give a nonzero map $f : {\mathcal N} \to {\mathcal E}$.
\medskip

What we have to do is take a nonzero element 
$f \in {\rm H}^0({\mathcal E} \otimes {\mathcal N}^{\vee})$
such that $\varphi (f) = s (\ne 0)$, where
\[ \varphi : {\rm H}^0({\mathcal E} \otimes {\mathcal N}^{\vee}) \to
{\rm H}^1({\mathcal E} \otimes \Omega^2_{{\mathbb P}^3} (1))  \to
 {\rm H}^2({\mathcal E}(-3))
 \]
arising from an exact sequence
\[ 0 \to {\mathcal E}(-3) \to {\mathcal E}(-2)^4 \oplus {\mathcal E}(-1)
\to {\mathcal E}(-1)^6
\to {\mathcal E}\otimes{\mathcal N}^{\vee}  \to 0. \]

In order to lift an element $s \in  {\rm H}^2({\mathcal E}(-3))$,
we will explain the behavior of the element explicitly by
comparing a double complex 
$C^{\bullet \bullet} = L^{\bullet} \otimes C^{\bullet}$,
where $L^{\bullet} : 0 \to {\mathcal O}_{{\mathbb P}^n}(-3) \to 
{\mathcal O}_{{\mathbb P}^n}(-2)^4
\to {\mathcal O}_{{\mathbb P}^n}(-1)^6 \to \Omega^1_{{\mathbb P}^3}(1) \to 0$
and a double complex below
 $D^{\bullet \bullet} = \bar{L}^{\bullet} \otimes C^{\bullet}$
from a free resolution of ${\mathcal N}^{\vee}$, where
$ \bar{L} :
0 \to {\mathcal O}_{{\mathbb P}^3}(-3)  \to {\mathcal O}_{{\mathbb P}^3}(-2)^4 
\oplus {\mathcal O}_{{\mathbb P}^3}(-1)
\to {\mathcal O}_{{\mathbb P}^3}(-1)^6 \to {\mathcal N}^{\vee} \to 0$
as numbering $\bar{L}^{-4} =  {\mathcal O}_{{\mathbb P}^3}(-3)$ and so on.
Note that $\bar{L}$ is not minimal free but has a superfluous component
${\mathcal O}_{{\mathbb P}^3}(-1)$, which plays an important role.

Let us take $u \in  C^2({\mathcal E}(-3))$  as an element 
$s (\ne 0) \in {\rm H}^2({\mathcal E}(-3))$ which gives
$v \in C^1({\mathcal E}(-2))^4$ with $d^{-3,1}(v) = \alpha (u)$
and $\beta(v) \not\in d^{-2,0}(C^0({\mathcal E}(-1))^6)$
in chasing an element
in the double complex $C^{\bullet \bullet}$ before.
However, we have $(\beta - \Phi)(v) = d^{-2,0}(w)$ for some
$w \in C^0({\mathcal E}(-1))^6$ from the construction of $\Phi$.
Thus we have a cycle $\delta (w)$
of $C^0({\mathcal E}\otimes {\mathcal N}^{\vee})$, which
gives an element $g \in {\rm H}^0({\mathcal E} \otimes
{\mathcal N}^{\vee})$ lifted from $s \in {\rm H}^2({\mathcal E}(-3))$
as desired.

\[
\begin{array}{cccccccccccc}
0 & \to & C^2({\mathcal E}(-3)) & \stackrel{\alpha}{\to}
 & C^2({\mathcal E}(-2))^4 
\oplus C^2({\mathcal E}(-1))
& \to &C^2({\mathcal E}(-1))^6
 & \to & C^2({\mathcal E}\otimes {\mathcal N}^{\vee}) & \to & 0 \\
 & & \uparrow & & \uparrow & & \uparrow & & \uparrow &  & \\
0 & \to & C^1({\mathcal E}(-3)) & \to & C^1({\mathcal E}(-2))^4 
\oplus C^1({\mathcal E}(-1)) & 
\stackrel{\beta - \Phi}{\to} &C^1({\mathcal E}(-1))^6
 & \to & C^1({\mathcal E}\otimes {\mathcal N}^{\vee}) & \to & 0 \\
 & & \uparrow & & \uparrow & & \uparrow d^{-2,0} & & \uparrow &  & \\
0 & \to & C^0({\mathcal E}(-3)) & \to & C^0({\mathcal E}(-2))^4 
\oplus C^0({\mathcal E}(-1)) & \to &C^0({\mathcal E}(-1))^6
 & \stackrel{\delta}{\to} & 
C^0({\mathcal E}\otimes {\mathcal N}^{\vee}) & \to & 0 \\
 & & \uparrow & & \uparrow & & \uparrow & & \uparrow &  & \\
 &  & 0 &  & 0 &  & 0 &  & 0 &  & 
\end{array}
\]

Once again, let us consider a spectral sequence $\{ E_r^{p,q} \}$
from the double complex 
$D^{\bullet \bullet} = \bar{L}^{\bullet} \otimes C^{\bullet}$.
From the arugument above,
we have $E_1^{-4,1} = E_2^{-4,1} = E_3^{-4,1} = 
{\rm H}^2(({\mathcal E}(-3))$. Also,
we have $E_1^{-2,0} = {\rm H}^0({\mathcal E}(-1)) = 0$ by (\ref{beforeproof}).
On the other hand, the spectral sequence converges to zero, that is,
$E_{\infty}^{p,q} = 0$, because
the row sequences have the exactness.
Thus we have $E_1^{-1,0} = E_2^{-1,0} = E_3^{-1,0}
= \Gamma^0({\mathcal E}\otimes {\mathcal N}^{\vee})$,
and an isomorphsim 
\[ d_3^{-4,2} : E_3^{-4,2} = {\rm H}^2(({\mathcal E}(-3)) \cong k \to 
E_3^{-1,0} = \Gamma({\mathcal E}\otimes {\mathcal N}^{\vee}) \cong k. \]
In particular, a nonzero map $f : {\mathcal N} \to {\mathcal E}$ is
unique up to the scalar.
\bigskip

\noindent
{\bf Step III}. To give a nonzero map $g : {\mathcal E} \to \Omega^2(3)$
and to show a composite map $g \circ f$ induces
an isomorphism ${\mathcal N} \cong {\mathcal E}$.
\medskip

By Serre duality, we have
an element $s'(\ne 0) \in {\rm H}^1({\mathcal E}^{\vee}(-1))$
corresponding to $s(\ne 0) \in {\rm H}^2({\mathcal E}(-3))$.
Let us take a free resolution
\[ \bar{L}' :
0 \to {\mathcal O}_{{\mathbb P}^3}(-1)  \to {\mathcal O}_{{\mathbb P}^3}^4 
\to \Omega^2_{{\mathbb P}^3}(3) (\cong
\Omega^{1 \vee}_{{\mathbb P}^3}(-1)) \to 0 \]
as numbering $\bar{L}'^{-4} =  {\mathcal O}_{{\mathbb P}^3}(-1)$ and so on.
Then let us take a double complex
 $D'^{\bullet \bullet} = \bar{L}'^{\bullet} \otimes C^{\bullet}$
and consider a spectral sequence $\{ E_r^{p,q} \}$ by 
abusing the same notation as a spectral sequence without confusing.

\[
\begin{array}{cccccccccccc}
0 & \to & C^1({\mathcal E}^{\vee}(-1)) & \to & C^1({\mathcal E}^{\vee})^4 
 & \to & C^1({\mathcal E}^{\vee} \otimes \Omega^2_{{\mathbb P}^3}(3)) 
& \to & 0 \\
 & & \uparrow & & \uparrow & & \uparrow &  & \\
0 & \to & C^0({\mathcal E}^{\vee}(-1)) & \to & C^0({\mathcal E}^{\vee})^4 
 & \stackrel{h}{\to} & 
C^0({\mathcal E}^{\vee} \otimes \Omega^2_{{\mathbb P}^3}(3)) & \to & 0 \\
 & & \uparrow & & \uparrow & & \uparrow &  & \\
 &  & 0 &  & 0 &  & 0 &  & 
\end{array}
\]

Similarly as in Step II,
we see that $E_1^{-4,1} = E_2^{-4,1} = {\rm H}^1({\mathcal E}^{\vee}(-1))$,
$E_1^{-2,0} = \Gamma({\mathcal E}^{\vee} \otimes
 \Omega^2_{{\mathbb P}^3}(3))$ and 
$E_2^{-2,0} = \Gamma({\mathcal E}^{\vee} \otimes
 \Omega^2_{{\mathbb P}^3}(3)) / h(\Gamma(({\mathcal E}^{\vee})^4)$.
 Then we have an isomorhism
\[ d_2^{-4,1} : E_2^{-4,1} = {\rm H}^1(({\mathcal E}^{\vee}(-1)) \cong k \to 
E_3^{-1,0} = \Gamma({\mathcal E}^{\vee} \otimes
 \Omega^2_{{\mathbb P}^3}(3)) / h(\Gamma(({\mathcal E}^{\vee})^4). \]
Thus a nonzero element $s'(\ne 0) \in {\rm H}^1({\mathcal E}^{\vee}(-1))$
gives a (not necessarily unique) nonzero map $g : {\mathcal E} \to 
\Omega^2_{{\mathbb P}^3}(3)$ as $g \in \Gamma({\mathcal E}^{\vee} \otimes
 \Omega^2_{{\mathbb P}^3}(3))$.
\medskip

A null-correlation bundle ${\mathcal N}$ is a subbundle of 
$\Omega^2_{{\mathbb P}^3}(3)$, and have
$\Gamma({\mathcal N}) = \Gamma({\mathcal N}^{\vee}) = 0$.
By taking ${\mathcal E} = {\mathcal N}$, we have an isomorphism
${\rm H}^1(({\mathcal N}^{\vee}(-1)) \cong 
\Gamma({\mathcal N}^{\vee} \otimes
 \Omega^2_{{\mathbb P}^3}(3)) \cong k$.
\medskip

From a commutative diagram
\[
\begin{array}{ccccccc}
{\rm H}^0({\mathcal N}^{\vee} \otimes {\mathcal E})
\otimes {\rm H}^0({\mathcal E}^{\vee} \otimes
\Omega^2_{{\mathbb P}^3}(3))
& \to  &
{\rm H}^0({\mathcal N}^{\vee} \otimes
 \Omega^2_{{\mathbb P}^3}(3)) & \cong &
{\rm H}^0({\mathcal N}^{\vee} \otimes {\mathcal N}) & \cong &
 {\rm H}^{0}({\cal O}_{{\mathbb P}^3})
\\
\downarrow & & & & & & \downarrow   \\
{\rm H}^2({\mathcal E}(-3)) \otimes
{\rm H}^1({\mathcal E}^{\vee}(-1)) 
  & 
\to  & & & & &
 {\rm H}^3({\cal O}_{{\mathbb P}^3}(-4) ), 
\\
\end{array}
\]

\noindent
we have an isomorphism
$g \circ f$. 
Hence
 ${\mathcal N}$ is a direct summand of ${\mathcal E}$ as desired.
\end{proof}

Next, let us turn to study a pseudo-Buchsbaum bundle. What we need is
the correspondence between 
a map $\psi : {\mathcal O}_{{\mathbb P}^3}(-1) \to
\Omega^1_{{\mathbb P}^3}(1) \oplus  {\mathcal O}_{{\mathbb P}^3}^2$
constructing a vector bundle ${\mathcal F}$
and a skew-symmetric matrix $A=A_{\mathcal F}$.

\begin{prop}
\label{correspondence}
Let $A = (a_{ij})$ be a skew-symmetric $4\times 4$-matrix
of rank 2 defining a pseudo-Buchsbaum bundle ${\mathcal F}$ on
${\mathbb P}^3$
defined by the short exact sequence:
\[ 0 \to {\mathcal O}_{{\mathbb P}^3}(-1) \stackrel{\psi}{\longrightarrow} 
\Omega^1_{{\mathbb P}^3}(1) \oplus  {\mathcal O}_{{\mathbb P}^3}^2
\to {\mathcal F}^{\vee}  \to 0, \]
where $\psi = (a_{00}x_0+\cdots+a_{03}x_{03}, \cdots, a_{30}x_0+
\cdots + a_{33}x_3, x_0, x_1)$.
Then the corresponding skew-symmetric matrix $A_{\mathcal F}$
defined by $\varphi_{x_i\wedge x_j}$, $0 \le i < j \le 3$ is equal to $A$.
\end{prop}

\begin{proof}
First, by changing coordinate we write 
\[ \psi = (\Psi, x_0,x_1) :  {\mathcal O}_{{\mathbb P}^3}(-1) \to
\Omega^1_{{\mathbb P}^3}(1) \oplus  {\mathcal O}_{{\mathbb P}^3}^2
\subset   {\mathcal O}_{{\mathbb P}^3}^4 \oplus  {\mathcal O}_{{\mathbb P}^3}^2, \]
where $\Psi(1) = (a_{00}x_0+\cdots+a_{03}x_{03}, \cdots, a_{30}x_0+
\cdots + a_{33}x_3)$ for a skew-symmetric matrix $A=(a_{ij})$ of rank 2. 
Let us construct a free resolution of
${\mathcal F}^{\vee}$.
For a given short exact sequence defining ${\mathcal F}$,
we will take a free resolution obtained from the
Koszul complex in the middle column:
\[
\begin{array}{ccccccccc}
 & & 0 & & 0 & &  & \\
 & & \uparrow & & \uparrow & &  & \\
0 & \to &  {\mathcal O}_{{\mathbb P}^3}(-1) & \stackrel{\psi}{\longrightarrow} 
& \Omega^1_{{\mathbb P}^3}(1) \oplus  {\mathcal O}_{{\mathbb P}^3}^2
& \to &  {\mathcal F}^{\vee} & \to & 0. \\
& & \uparrow & & \uparrow & & & \\
 & & {\mathcal O}_{{\mathbb P}^3}(-1)  & \stackrel{\psi}{\longrightarrow}
 & {\mathcal O}_{{\mathbb P}^3}(-1)^6 \oplus  {\mathcal O}_{{\mathbb P}^3}^2
 &  &  &  & \\
 & & \uparrow & & \uparrow & & & & \\
 &  & 0 &  & {\mathcal O}_{{\mathbb P}^3}(-2)^4 & \ & &  &  \\
 & &  & & \uparrow & &  & \\
 & &  & & {\mathcal O}_{{\mathbb P}^3}(-3) & & & \\
 & &  & & \uparrow & &  & \\
 & &  & &  0 & &  &  \\
\end{array}
\]
By taking a mapping cone, we have a (not minimal)
free resolution of
${\mathcal F}^{\vee}$ which preserves the exactness
after taking $\Gamma_*(\cdot)$:
\[\bar{L}^{\bullet} :
 0 \to {\mathcal O}_{{\mathbb P}^3}(-3) \to {\mathcal O}_{{\mathbb P}^3}(-2)^4
\oplus {\mathcal O}_{{\mathbb P}^3}(-1) \to {\mathcal O}_{{\mathbb P}^3}(-1)^6 \oplus
{\mathcal O}_{{\mathbb P}^3}^2 \to {\mathcal F}^{\vee} \to 0. \]
Then we consider a double complex $\bar{C}^{\bullet\bullet}
= \bar{L}^{\bullet} \otimes C^{\bullet}$, where $C^{\bullet}$
is the \u{C}ech resolution.
A double complex $\bar{C}^{\bullet\bullet}$ gives
a spectral sequence $\{ \bar{E}^{p,q}_r \}$:
\[ \bar{d}^{-4,2}_2 : \bar{E}^{-4,2}_2 = 
{\rm H}^2_*({\mathcal F}(-3)) \cong k(3) \longrightarrow \bar{E}^{-2,1}_2 
= {\rm H}^1_*({\mathcal F}(-1))^5 
\oplus {\rm H}^1_*({\mathcal F})^2
\cong k(1)^5 \oplus k^2. \]
Here we remark that a map $\bar{d}^{-3,1}_1$ gives
an injection ${\rm H}^1_*({\mathcal F}(-1)) \to {\rm H}^1_*({\mathcal F}(-1))^6$
in the direct summand, so
 $\bar{E}^{-2,1}_2 = {\rm H}^1_*({\mathcal F}(-1))^6
/{\rm H}^1_*({\mathcal F}(-1)) \cong k(1)^5$.
\medskip

On the other hand,
a spectral sequence  $\{ \bar{E}^{p,q}_r \}$ converges to $0$ because
of the exactness of $\bar{L}^{\bullet}$. This implies that
$\bar{E}^{-1,0}_4 = \cdots = \bar{E}^{-1,0}_{\infty} = 0$, while
$\bar{E}^{-1,0}_1 = \Gamma_*({\mathcal F}^{\vee} \otimes {\mathcal F})$.
\medskip

Let us study a map in degree zero
$\bar{d}^{-2,0}_1 : \bar{E}^{-2,0}_1(\cong \Gamma({\mathcal F})^2)
\to \bar{E}^{-1,0}_1 (\cong \Gamma({\mathcal F}^{\vee} \otimes {\mathcal F}))$
because $\Gamma({\mathcal F}(-1))=0$.
Then we will show $1 \in \Gamma({\mathcal F}^{\vee} \otimes {\mathcal F})
= {\rm Hom}({\mathcal F},{\mathcal F})$ is not in the image of
$\bar{d}^{-2,0}_1$. Indeed, if $1 \in {\rm Im}\, \bar{d}^{-2,0}_1$,
the element 1 would be lifted by
${\rm Hom}({\mathcal F}, {\mathcal O}_{{\mathbb P}^3}^2) \to
 {\rm Hom}({\mathcal F},{\mathcal F})$. This means that
${\mathcal F}$ is a direct summand of ${\mathcal O}_{{\mathbb P}^3}^2$,
which contradicts with the assumption.
Thus we see that $\bar{d}^{-2,0}_1$ is not surjective in degree 0.
Since  $\bar{E}^{-3,1}_4 = {\rm H}^1({\mathcal F}(-2)) = 0$ in degree 0 and
$\bar{E}^{-1,0}_4 = 0$, we see that
$\bar{d}^{-4,2}_3 : \bar{E}^{-4,2}_3(\cong k)  \to \bar{E}^{-1,0}_3$ must be
nonzero in degree 0. 
Hence $\bar{d}^{-4,2}_2 : \bar{E}^{-4,2}_2 \to \bar{E}^{-2,1}_2$
is a zero map.
\medskip

As described before, a spectral sequence $\{ E^{p,q}_r \}$ of a double complex
$C^{\bullet \bullet} = L^{\bullet} \otimes C^{\bullet}$,
where ${\displaystyle L^{\bullet} : 0 \to {\mathcal O}_{{\mathbb P}^3}(-3) \to 
{\mathcal O}_{{\mathbb P}^3}(-2)^4 \to
{\mathcal O}_{{\mathbb P}^3}(-1)^6 \to \Omega^1_{{\mathbb P}^3}(1) \to 0}$,
and $C^{\bullet}$ is \u{C}ech resolution of ${\mathcal F}$
gives a map
\[ d^{-4,2}_2 : E^{-4,2}_2 = {\rm H}^2_*({\mathcal F}(-3)) \cong k \to E^{-2,1}_2 
= {\rm H}^1_*({\mathcal F}(-1))^6 \cong k^6, \]
written as
$d^{-4,2}_2 = \oplus_{0 \le i < j \le 3} \varphi_{x_i \wedge x_j}$
defining a skew-symmetric matrix $A = A_{\mathcal F}$.
On the other hand, a map $\bar{d}^{-4,2}_2$ in the spectral sequence
$\{ \bar{E}^{p,q}_r \}$ gives a map 
${\rm H}^2({\mathcal F}(-3)) (\cong k) \to
{\rm H}^1_*({\mathcal F}(-1))^5 
\cong {\rm H}^1_*({\mathcal F}(-1))^6
/{\rm H}^1_*({\mathcal F}(-1)) (\cong k^5)$ which must be zero.
From the construction of mapping cone, we obtain
$\oplus_{0 \le i < j \le 3} \varphi_{x_i \wedge x_j}(1)
= (a_{ij})_{0 \le i < j \le 3}$, where $A = (a_{ij})$
defines ${\mathcal F}$. Thus we have the correspondence.
\end{proof}

Now we will state a classification theorem for psuedo-Buchsbaum
bundles.

\begin{theorem}
\label{pseudo-Buchsbaum thm}
Pseudo-Buchsbaum vector bundles ${\mathcal E}$ are isomorphic to
${\rm Coker}\, \psi$,
where $\psi : {\mathcal O}_{{\mathbb P}^3} \to \Omega^1_{{\mathbb P}^3}(2)
\oplus {\mathcal O}_{{\mathbb P}^3}(1)^2$,
classified $\psi$ as either of them:
\begin{enumerate}
\item[{\rm (i)}]
$\psi(1) = (x_3, x_3, x_3, -x_0-x_1+x_2) \times (x_0, x_1)$.
\item[{\rm (ii)}]
$\psi(1) = (0, x_3, x_3, -x_1+x_2) \times (x_0, x_1)$.
\item[{\rm (iii)}]
$\psi(1) = (0, 0, x_3,-x_2) \times (x_0, x_1)$.
\end{enumerate}
Here the corresponding skew-symmetric matrices are
\medskip

{\rm (i)}
${\displaystyle   \left(
\begin{array}{cccc}
0 & 0 & 0 & 1 \\
0 & 0 & 0 & 1 \\
0 & 0 & 0 & 1 \\
-1 & -1 & -1 & 0
\end{array}
\right)  \quad  }$
{\rm (ii)}
${\displaystyle   \left(
\begin{array}{cccc}
0 & 0 & 0 & 0 \\
0 & 0 & 0 & 1 \\
0 & 0 & 0 & 1 \\
0 & -1 & -1 & 0
\end{array}
\right)  \quad   }$
{\rm (iii)}
${\displaystyle   \left(
\begin{array}{cccc}
0 & 0 & 0 & 0 \\
0 & 0 & 0 & 0 \\
0 & 0 & 0 & 1 \\
0 & 0 & -1 & 0
\end{array}
\right)  \quad   }$
\end{theorem}

\begin{proof}
Let $A=(a_{ij})$ be the corresponding skew-symmetrix matrix of ${\mathcal F}$
We may assume $a_{01} = a_{02} = 0$.
by changing coordinates $x_1$ and $x_2$
as $x_1 - (a_{01}/a_{03}) x_3$ and $x_2 -(a_{02}/a_{03}) x_3$ respectively
if $a_{03} \ne 0$.
Since ${\rm pf}\, A = -a_{03}a_{12} = 0$, we see that
either $a_{12} = 0$ or $a_{03} = 0$. Further if $a_{03} = 0$ and $a_{12} \ne 0$,
we may assume $a_{12} = 0$ by changing coordinates.
In the end, the corresponding skew-symmetric matrix is isomorphic to
either of (i), (ii) and (iii).
\end{proof}

\begin{rem}
\label{pseudo-Buchsbaum rem}
The skew-symmetric matrix corresponding to a pseudo-Buchsbaum
bundle has 3 types which have different ring-theoretic properties
under hyperplane sections.
\begin{enumerate}
\item[{\rm (i)}]
There are no hyperplanes $H \subset {\mathbb P}^3$ such that
${\mathcal E}|_H$ is a Buchsbaum bundle.
\item[{\rm (ii)}]
There is only one hyperplane $H= \{x_0 = 0\} \subset {\mathbb P}^3$ 
such that
${\mathcal E}|_H$ is a Buchsbaum bundle, that is, isomorphic to
$\Omega^1_H(1) \oplus
\Omega^1_H(2)$. 
\item[{\rm (iii)}]
For any hyperplane $H$ containing $\{x_0 = x_1 = 0\} (\subset {\mathbb P}^3)$,
${\mathcal E}|_H$ is a Buchsbaum bundle. 
\end{enumerate}
\end{rem}

As a result of (\ref{pseudo-Buchsbaum thm}),
we describe Buchsbaum, pseudo-Buchsbaum and non-standard Buchsbaum
in terms of standard systems of parameters.

\begin{cor}
\label{standardsop}
Let $S = k[x_0.x_2,x_3,x_4]$ be the polynomial ring.
Let ${\mathcal E}$ be a quasi-Buchsbaum bundle on ${\mathbb P}^3
= {\rm Proj}\, S$ with $\dim_k {\rm H}^1_*({\mathcal E})=
\dim_k{\rm H}^2_*({\mathcal E})=1$.
Let $M = \Gamma_*({\mathcal E})$ be a graded $S$-module with
${\rm H}^i_{\mathfrak m}(M) = 0$, $i=0,1$.
Then there is a standard ideal generated by two linearly independent
elements. Furthermore, we have the following:
\begin{itemize}
\item[{\rm (i)}]
${\mathcal E}$ is Buchsbaum if and only if ${\mathfrak m}$
is a standard ideal, that is, 
${\mathfrak m}{\rm H}^2_*({\mathcal E}|_H) = 0$ for any hyperplane
$H \subset {\mathbb P}^3$
\item[{\rm (ii)}]
${\mathcal E}$ is pseudo-Buchsbaum if and only if
${\mathfrak m}$ is not standard and
there is a standard ideal generated by three linearly independent
elements $y_1,y_2,y_3 \in S_1$
such that
$(y_1,y_2,y_3){\rm H}^2_*({\mathcal E}|_{H_i}) = 0$ for any $i=1,2,3$.
\end{itemize}
\end{cor} 

Finally, we close this section 
by giving an example of a quasi-Buchsbaum bundle ${\mathcal E}$
 on ${\mathbb P}^3$ with $\dim_k {\rm H}^1_*({\mathcal E})=2$
and $\dim_k {\rm H}^2_*({\mathcal E})=1$

\begin{ex}
\label{quasi-Buchsbaum_ex}
Let $\varphi_1 : {\mathcal O}_{{\mathbb P}^3} \to \Omega^1_{{\mathbb P}^3}(2)
(\subset {\mathcal O}_{{\mathbb P}^3}(1)^{\oplus 4})$
by $\varphi_1(1) = (x_1,-x_0,0,0)$ and $\varphi_2 : 
{\mathcal O}_{{\mathbb P}^3} \to \Omega^1_{{\mathbb P}^3}(2)
(\subset {\mathcal O}_{{\mathbb P}^3}(1)^{\oplus 4})$
by $\varphi_2(1) = (0,0,,x_3,-x_2)$. Then $\varphi = \varphi_1 + \varphi_2 :
 {\mathcal O}_{{\mathbb P}^3} \to \Omega^1_{{\mathbb P}^3}(2)
\oplus  \Omega^1_{{\mathbb P}^3}(2)$ gives ${\mathcal E} = {\rm Coker}\, \varphi$
as a vector bundle of rank $5$. We see that ${\mathcal E}$
is an indecomposable quasi-Buchsbaum but not Buchsbaum bundle with
${\rm H}^1_*({\mathcal E}) \cong k(2) \oplus k(2)$ and
${\rm H}^2_*({\mathcal E}) \cong k$. 
\end{ex}

\section{Some Consequences on vector bundles on ${\mathbb P}^n$ with small intermediate cohomologies}

So far, we have studied quasi-Buchsbaum bundle on ${\mathbb P}^n$
with nonvanishing intermediate cohomologies only in ${\rm H}_*^1$ and
${\rm H}_*^{n-1}$. In particular, we have classified quasi-Buchsbaum
bundles on ${\mathbb P}^3$ with  $\dim_k{\rm H}_*^1 = \dim_k{\rm H}_*^2 = 1$.
This arises the following question. 

\begin{ques}
\label{question}
Let $1 \le j_1 < j_2 \le n-1$. Is there a
classification of quasi-Buchsbaum bundles ${\mathcal E}$
on ${\mathbb P}^n$
with 
${\rm H}^i_*({\mathcal E}) = 0$ for $1 \le i \le n-1$, $i \ne j_1,j_2$?
How about assuming
$\dim_k{\rm H}^1_*({\mathcal E}) = \dim_k{\rm H}^2_*({\mathcal E}) = 1$?
\end{ques}

Now we will study $j_1 = 1$, $j_2=1$ case.
Let ${\mathcal E}$ be a vector bundle on ${\mathbb P}^n = {\rm Proj}\, S$ with
${\rm H}^1_*({\mathcal E}) \cong k(1)$, ${\rm H}^2_*({\mathcal E})
\cong k(3)^{\oplus}$ and ${\rm H}^i_*({\mathcal E}) = 0$, $3 \le i \le n-1$,
where $S = k[x_0,\cdots,x_n]$ is a polynomial ring.
As in the previous section, we have a map $\Phi$
through the corresponding spectral sequence:
\[ \Phi = \bigoplus_{0 \le i < j \le n} \varphi_{x_i \wedge x_j} :
 {\rm H}^2({\mathcal E}(-3))
\to \bigoplus_{0 \le i < j \le 3}{\rm H}^1({\mathcal E}(-1)). \]
For $s (\ne 0) \in {\rm H}^2({\mathcal E}(-3))$, we have a skew-symmetic
matrix $A = A_s = (a_{ij})$, where $a_{ij} = \varphi_{x_i \wedge x_j}(s)$. In particular,
in case $\dim_k{\rm H}^2({\mathcal E}(-3)) = 1$, we put $A = A_{\mathcal E} =(a_{ij})$
with $a_{ij} = \varphi_{x_i \wedge x_j}(1)$.

Let us study a map ${\mathcal O}_{{\mathbb P}^n}(-1)
 (\cong \Omega^n_{{\mathbb P}^n}(n)) \to \Omega^{n-2}_{{\mathbb P}^n}(n-2)$.
Now we consider an exact sequence arising from Koszul complex:
\[ 0 \to S(-2) \to S(-1)^{\oplus n+1} \to S^{\oplus c_2} \stackrel{d_1}{\to}
S(1)^{\oplus c_3} \stackrel{d_2}{\to} S(2)^{\oplus c_4},  \]
where ${\displaystyle r_i = \left(
\begin{array}{c} n+1 \\ i \end{array}
\right), i=2,3,4}$.
Then we see  $\Gamma(\Omega^{n-2}_{{\mathbb P}^n}(n-1)) = {\rm Im}\, d_1
= {\rm Ker}\, d_2$.
We will describe an element of $\Gamma(\Omega^{n-2}_{{\mathbb P}^n}(n-1))$
in order to give a map of 
${\rm Hom}({\mathcal O}_{{\mathbb P}^n}(-1)), \Omega^{n-2}_{{\mathbb P}^n}(n-2))$.
Note that 
${\rm Hom}(\Omega^i_{{\mathbb P}^n}(i)), \Omega^j_{{\mathbb P}^n}(j))
\cong \wedge^{i-j}\Gamma({\mathcal O}_{{\mathbb P}^n}(1))$, see, e.g.~\cite[Lemma~1]
{DSch}.
Let us put $K_1 = \oplus_{0 \le i < j \le n} S e_i \wedge e_j = S^{\oplus c_2}$
and
$K_2 = \oplus_{0 \le i < j < k \le n} S e_i \wedge e_j \wedge e_k = S^{\oplus c_3}$.
Then
we write
$d_1(s) = \sum_{0 \le i < j < k \le n}
(a_{jk}x_i - a_{ik}x_j + a_{ij}x_k) e_i \wedge e_j \wedge e_k \in K_2$
for an element $s = \sum_{0 \le i < j \le n} a_{ij}e_i\wedge e_j \in K_1$. 
In other words,
a map ${\mathcal O}_{{\mathbb P}^n} \to \Omega^{n-2}_{{\mathbb P}^n}(n-1)
\subset {\mathcal O}_{{\mathbb P}^n}(1)^{\oplus c_3}$ is
given by linear forms
$(a_{jk}x_i - a_{ik}x_j + a_{ij}x_k)$, $0 \le i < j < k \le n$.
For a generic $(a_{ij})$, $0 \le i < j \le n$, the corresponding map
${\mathcal O}_{{\mathbb P}^n} \to \Omega^{n-2}_{{\mathbb P}^n}(n-1)$
is injective as vector bundles.
In fact, by taking $a_{ij} = 1$, the system of linear equations 
$x_i -x_j + x_k = 0$, $1 \le i < j < k \le n+1$
has only a trivial solution. 

For a generic $A = (a_{ij})$ we define a vector bundle
${\mathcal N}_{12}$ as an exact sequence
\[ 0 \to {\mathcal O}_{{\mathbb P}^n}(-1) \to \Omega^{n-2}_{{\mathbb P}^n}(n-2)
\to {\mathcal N}_{12}^{\vee} \to 0. \]
Then we easily have ${\rm H}^1_*({\mathcal N}_{12}) \cong k(1)$,
${\rm H}^2_*({\mathcal N}_{12}) \cong k(3)$ and ${\rm H}^i_*({\mathcal N}_{12}) = 0$,
$3 \le i \le n-1$. Also, we have ${\mathcal N}_{12}$ is a subbundle of
$\Omega^2_{{\mathbb P}^n}(-1)$ and ${\rm Hom}({\mathcal N}_{12}, {\mathcal N}_{12})
\cong {\rm Hom}({\mathcal N}_{12}, \Omega^2_{{\mathbb P}^n}(-1)) \cong k$,
similarly given as in \cite[(I.4.2)]{OSS}.
By a commutative diagram with exact rows and columns
\[
\begin{array}{ccccccccc}
 & & 0 & & 0 & &  & \\
 & & \uparrow & & \uparrow & &  & \\
0 & \to &  {\mathcal O}_{{\mathbb P}^3}(-1) & \to
& \Omega^{n-2}_{{\mathbb P}^n}(1) & \to &  {\mathcal N}_{12}^{\vee}  & \to & 0. \\
& & \uparrow & & \uparrow & & & \\
 & & {\mathcal O}_{{\mathbb P}^n}(-1) & \to
 & {\mathcal O}_{{\mathbb P}^n}(-1)^{\oplus c_2} &  &  &  & \\
 & & \uparrow & & \uparrow & & & & \\
 &  & 0 &  & {\mathcal O}_{{\mathbb P}^n}(-2)^{\oplus n+1} & \ & &  &  \\
 & &  & & \uparrow & &  & \\
 & &  & & {\mathcal O}_{{\mathbb P}^n}(-3) & & & \\
 & &  & & \uparrow & &  & \\
 & &  & &  0 & &  &  \\
\end{array}
\]
\noindent
we have a free resolution of ${\mathcal N}^{\vee}$ by taking a mapping cone
\[ 
0 \to {\mathcal O}_{{\mathbb P}^n}(-3) \to {\mathcal O}_{{\mathbb P}^n}(-2)^{\oplus n+1} 
\oplus {\mathcal O}_{{\mathbb P}^n}(-1)
\to {\mathcal O}_{{\mathbb P}^n}(-1)^{\oplus c_2} \to {\mathcal N}^{\vee} \to 0. \]
Then a skew-symmetric matrix $A$ corresponding to ${\mathcal N}$ gives
a map ${\mathcal O}_{{\mathbb P}^3}(-1)  \to \Omega^{n-2}_{{\mathbb P}^n}(1)$
by $\{ a_{jk}x_i - a_{ik}x_j + a_{ij}x_k \}_{0\le i < j < k \le n}$. 

\begin{dfprop}
\label{main-dfprop}
Let ${\mathcal E}$ be a vector bundle on ${\mathbb P}^n = {\rm Proj}\, S$ 
having only intermediate cohomologies in ${\rm H}^1({\mathcal E}(-1)) (\cong k)$
and ${\rm H}^2({\mathcal E}(-3))$.
Let $A$ be the corresponding skew-symmetric $(n+1)$-matrix for
$s (\ne 0) \in {\rm H}^2({\mathcal E}(-3))$. Then we call
 $s \in {\rm H}^2({\mathcal E}(-3))$ as a nonstandard
element if the corresponding map $\varphi : 
{\mathcal O}_{{\mathbb P}^n}(-1) \to \Omega^{n-2}_{{\mathbb P}^n}(n-2)$
is injective as a vector bundle map.

Under this assumption, that is, if there is a nonstandard element, then 
a vector bundle 
${\mathcal E}$ has ${\mathcal N}_{12}$ as a direct summand,
where ${\mathcal N}_{12}$ is the dual of the cokernel of $\varphi$,
which is similarly shown as (\ref{mainth}).
\end{dfprop}

\begin{ques}
What about a quasi-Buchsbaum bundle ${\mathcal E}$ with 
${\rm H}^i_*({\mathcal E}) = 0$, $i \ne 0, p, p+1, n$ for $p$ with $1 \le p \le n-2$?
If there is a map $\varphi :
\Omega^{n-p+1}_{{\mathbb P}^n}(n-p+1) \to \Omega^{n-p-1}_{{\mathbb P}^n}(n-p-1)$
as a vector bundle, then ${\mathcal N}_{p,p+1} = ({\rm Coker}\, \varphi)^{\vee}$
has only intermediate cohomologies ${\rm H}^p$ and ${\rm H}^{p+1}$.
The mapping cone free resolution would similarly play an important role to extend
our results.
\end{ques}

\section{Vector bundles on multiprojective spaces}

Is there a vector bundle ${\mathcal E}$ on $X = {\mathbb P}^m \times {\mathbb P}^n$
satisfying that
${\rm H}^i(X, {\mathcal E} \otimes {\mathcal O}_X(\ell_1,\ell_2)) = 0$, $1 \le i \le m+n-1$
for any $(\ell_1,\ell_2) \in {\mathbb Z} \times {\mathbb Z}$?
In fact there are no such vector bundles obtained from the basic property of the Castelnuovo-Mumford
regularity on multiprojective space.

\begin{dfprop}[\cite{BM}]
\label{multiregdf}
A coherent sheaf ${\mathcal F}$ on
$X = {\mathbb P}^m \times {\mathbb P}^n$ is $0$-regular if 
${\rm H}^i(X,{\mathcal F}(j_1,j_2)) = 0$ for $i \ge 1, \, j_1 + j_2 = -i, \,
-m \le j_1 \le 0, \, -n \le j_2 \le 0 $.

Then 
${\mathcal F}|_{H \times {\mathbb P}^n}$ is $0$-regular on 
$H \times {\mathbb P}^n
(\cong {\mathbb P}^{m-1} \times {\mathbb P}^n)$
for a generic hyperplane $H$ of ${\mathbb P}^m$, and
${\mathcal F}(m_1,m_2)$ is $0$-regular for any $m_1 \ge 0$, $m_2 \ge 0$,
and 
${\mathcal F}$ is globally generated.
\end{dfprop}

Let us consider a vector bundle ${\mathcal E}$ on 
$X = {\mathbb P}^m \times {\mathbb P}^n$ without
intermediate cohomology.
Let $t$ be the minimal
integer  such that ${\mathcal F}={\mathcal E}(t,t)$ is $0$-regular. 
Since the intermediate cohomologies
vanishes, we see that
${\rm H}^{m+n}(X, {\mathcal F}(-m-1,-n-1)) \ne 0$.
By Serre duality we have ${\rm H}^0({\mathcal F}^{\vee}) \ne 0$, which
gives a nonzero map
$\varphi : {\mathcal F} \to {\mathcal O}_X$.
On the other hand,  a globally generated bundle ${\mathcal F}$
gives a surjective map  $\psi: {\mathcal O}_X^{\oplus}
\to {\mathcal F}$. Since $\varphi \circ \psi$ is a nonzero map, 
${\mathcal O}_X$ is a direct summand of ${\mathcal F}$.
However, we see
${\rm H}^m(X,{\mathcal O}_X(-m-1,0)) \ne 0$, which contradicts with
the assumption of ${\mathcal E}$.

The following result is an immediate consequence of \cite[(2.5)]{M1},
given by a syzygy theoretic approach implies that
an ACM bundle on a smooth quadric surface $Q = {\mathbb P}^1
\times {\mathbb P}^1 \subset {\mathbb P}^3$ is isomorphic
to a direct summand of 
${\mathcal O}_X$, ${\mathcal O}_X(-1,0)$, ${\mathcal O}_X(0,-1)$
with some twist, a special case of \cite{Kn}.

\begin{prop}
\label{multiprop}
Let ${\mathcal E}$ be a vector bundle on
$X = {\mathbb P}^{n} \times {\mathbb P}^{n}$.
Then the following conditions are equivalent:
\begin{enumerate}
\item[{\rm (a)}]
\begin{enumerate}
\item[{\rm (i)}]
${\rm H}^i(X,{\mathcal E}(\ell_1, \ell_2)) = 0$ for any 
$\ell_1, \ell_2 \in {\mathbb Z}$ with $|\ell_1-\ell_2| \le n$, and
$i=1,\cdots,n-1,n+1,\cdots,2n-1$.

\item[{\rm (ii)}]
${\rm H}^n(X,{\mathcal E}(\ell, \ell)) = 0$ for any $\ell \in {\mathbb Z}$.
\end{enumerate}
\item[{\rm (b)}]
A vector bundle
${\mathcal E}$ is isomorphic to a direct sum of line bundles of the form
${\mathcal O}_X(u,v)$ for some $u, v$ with $|u-v| \le n$.
\end{enumerate}
\end{prop}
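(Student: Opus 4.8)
The plan is to prove the two implications separately: (b) $\Rightarrow$ (a) is a direct Künneth computation, while (a) $\Rightarrow$ (b) is obtained by feeding the hypotheses into the general multiprojective splitting criterion \cite[(2.5)]{M1}, the case $n=1$ of which is the splitting of ACM bundles on the smooth quadric surface $\mathbb{P}^1\times\mathbb{P}^1$ recalled just before the statement.

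For (b) $\Rightarrow$ (a) I would use the Künneth formula: for a line bundle on $X=\mathbb{P}^n\times\mathbb{P}^n$,
\[ {\rm H}^i(X,{\mathcal O}_X(a,b)) = \bigoplus_{p+q=i}{\rm H}^p(\mathbb{P}^n,{\mathcal O}(a))\otimes{\rm H}^q(\mathbb{P}^n,{\mathcal O}(b)), \]
and since the cohomology of a line bundle on $\mathbb{P}^n$ is concentrated in degrees $0$ and $n$, this group vanishes for every $i\notin\{0,n,2n\}$ and every $(a,b)$, which already gives (a)(i) for an arbitrary twist. For (a)(ii) I would note that ${\rm H}^n(X,{\mathcal O}_X(a,b)) = ({\rm H}^0({\mathcal O}(a))\otimes{\rm H}^n({\mathcal O}(b)))\oplus({\rm H}^n({\mathcal O}(a))\otimes{\rm H}^0({\mathcal O}(b)))$, whose first summand is nonzero only if $a\ge 0$ and $b\le -n-1$ (so $a-b\ge n+1$) and whose second is nonzero only if $b-a\ge n+1$; hence if ${\mathcal E}\cong\oplus_k{\mathcal O}_X(u_k,v_k)$ with $|u_k-v_k|\le n$, then $|(u_k+\ell)-(v_k+\ell)|=|u_k-v_k|\le n$ forces ${\rm H}^n(X,{\mathcal E}(\ell,\ell))=0$ for every $\ell$.

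For (a) $\Rightarrow$ (b) I would first normalize $\mathcal{E}$, after a twist, so that it is $0$-regular in the sense of (\ref{multiregdf}) while ${\mathcal E}(-1,-1)$ is not; then ${\mathcal E}$ is globally generated, so there is a surjection ${\mathcal O}_X^{\oplus}\twoheadrightarrow{\mathcal E}$. The failure of $0$-regularity of ${\mathcal E}(-1,-1)$ produces a nonzero ${\rm H}^i(X,{\mathcal E}(j_1,j_2))$ with $i\ge 1$, $-n-1\le j_1,j_2\le -1$ and $j_1+j_2=-i-2$; all such twists satisfy $|j_1-j_2|\le n$, so (a)(i) forces $i\in\{n,2n\}$, and when $i=n$ one further discards the diagonal solution using (a)(ii). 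If $i=2n$ the only solution is the diagonal twist $(-n-1,-n-1)$, so ${\rm H}^{2n}(X,{\mathcal E}(-n-1,-n-1))\ne 0$; Serre duality then gives a nonzero map ${\mathcal E}\to{\mathcal O}_X$, composing it with the surjection above yields ${\mathcal O}_X$ as a direct summand of ${\mathcal E}$, and I would induct on the rank. The remaining possibility is a nonvanishing ${\rm H}^n$ at an off-diagonal twist, which I would treat by the same pattern after the appropriate twist: feeding this group and its Serre-dual nonvanishing ${\rm H}^n(\mathcal{E}^{\vee})$ into the syzygy-theoretic machinery of \cite{M1} (precisely as Lemma~\ref{ACM} and Lemma~\ref{BuchsbaumLemma} detect a summand on $\mathbb{P}^n$), I would split off a summand ${\mathcal O}_X(u,v)$ with $|u-v|\le n$; iterating exhausts $\mathcal{E}$, since as long as the residual bundle is nonzero the normalization — hence the splitting step — is available.

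The hard part is exactly this last case. Unlike on $\mathbb{P}^n$, the group ${\rm H}^n$ of an off-diagonal twist of a permissible line-bundle sum is genuinely allowed to be nonzero — already ${\mathcal O}_X(n,0)$ has ${\rm H}^n(X,{\mathcal O}_X(n-1,-n-1))\ne 0$, coming from the in-band twist $(-1,-n-1)$ — so one cannot run the Castelnuovo--Mumford/Horrocks argument from the $(0,0)$-regularity alone. One must propagate through a whole band of bigraded regularities and check that every nonvanishing intermediate cohomology forced by the normalization detects the splitting of a line bundle ${\mathcal O}_X(u,v)$ with $|u-v|\le n$, i.e. that the width-$n$ band of admissible off-diagonal ${\rm H}^n$'s matches exactly the width-$n$ band of admissible line-bundle summands. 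Conceptually this holds because in Beilinson's resolution of the diagonal of $\mathbb{P}^n\times\mathbb{P}^n$ the extreme terms $\Omega^0_{\mathbb{P}^n}(0)={\mathcal O}_{\mathbb{P}^n}$ and $\Omega^n_{\mathbb{P}^n}(n)\cong{\mathcal O}_{\mathbb{P}^n}(-1)$ are line bundles, while the intermediate $\Omega^p_{\mathbb{P}^n}(p)$ would only be seen through the intermediate cohomology that (a)(i) kills, so the associated Beilinson monad of $\mathcal{E}$ collapses to one with line-bundle terms. Making this precise is the content of \cite[(2.5)]{M1}, and once it is in hand the deduction of (\ref{multiprop}) is immediate.
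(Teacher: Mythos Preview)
Your proposal is correct and matches the paper's approach: the paper gives no proof at all, stating only that the result ``is an immediate consequence of \cite[(2.5)]{M1},'' which is exactly the reference you invoke for the substantive direction (a) $\Rightarrow$ (b). Your K\"unneth verification of (b) $\Rightarrow$ (a) and your partial unwinding of the regularity argument are sound and simply add detail beyond what the paper supplies.
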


\begin{ques}
Find a generalization of cohomological criteria of (\ref{multiprop}) 
for vector bundles on
${\mathbb P}^n \times \cdots \times {\mathbb P}^n$.
\end{ques}


{\small
\noindent
Kumamoto University \\
Faculty of Advanced Science and Technology \\
Kurokami 2-40-1, Chuo-ku,
Kumamoto 860-8555,
Japan \\
e-mail: cmiyazak@kumamoto-u.ac.jp
}

\end{document}